\newtheorem{lemma}{Lemma}[section]
\newtheorem{theorem}{Theorem}[section]
\newtheorem{definition}{Definition}[section]
\newtheorem{example}{Example}[section]
\begin{document}

\begin{frontmatter}

\title{Equivalent of Multivariate Polynomial Matrix}

\author[a]{Jinwang Liu}\ead{jwliu64@aliyun.com}
\author[b]{Tao Wu$^*$}\ead{839417221@qq.com}

\author[a]{Jiancheng Guan}\ead{jiancheng-guan@aliyun.com}
\author[a]{Ying Kang}\ead{1343317573@qq.com}

\address[a]{School of Mathematics and Computational Science, Hunan University Science and Technology, XiangTan, 411201, China}
\address[b]{School of Computer Science and Engineering, Hunan University Science and Technology, XiangTan, 411201, China}

\tnotetext[labeltitle]{$^*$Corresponding author.}

\begin{abstract}
The equivalence of multidimensional systems is closely related to the reduction of multivariate polynomial matrices, with the Smith normal form of matrices playing a key role. So far, the problem of reducing multivariate polynomial matrices into Smith normal form has not been fully solved. This article investigates the Smith normal form of quasi weakly linear multivariate polynomial matrices. Breaking through previous limitations, providing sufficient and necessary conditions and algorithms.

\end{abstract}

\begin{keyword}
Multidimensional ($n$D) system, Quasi weakly linear, $n$D polynomial matrix, Smith normal form, Reduced Gr\"{o}bner bases.
\end{keyword}
\end{frontmatter}

\section{Introduction}

Multidimensional ($n$D) linear systems are very common in the solution of engineering problems, and multivariate polynomial matrices are the most basic tools for studying multidimensional systems.  The Smith normal form of multivariate polynomial matrices plays an important role in solving problems such as circuit analysis, coding, network communication, and automation control.

The nD polynomial matrix theory is a well established tool for the study of these systems, since some problems in the analysis and synthesis of control systems can be solved using polynomial matrix techniques  \cite{Bose1982,Quillen1976,Suslin1976,Kailath1980,Bose2003,Becker1994,Pommaret2001,Quadrat2010,Antsaklis2006,Boudellioua2012}. It is well known that these linear systems, both finite and infinite, can be studied from the algebraic point of view, see for instance \cite{Callier1982,Vardulakis1991,Boudellioua2010,Chu2012,Vologiannidis2016,Srinivas2004,William1993} and the references therein. The starting point of the analysis conducted is the transfer function matrix arising from the input-output description of a system, the elements of such matrices are rational polynomial in a number of indeterminate. Two commonly used forms of describing the transfer function matrix are the matrix fraction description and the Rosenbrock system matrix. The state space method plays an important role in the modern theory of automatic control where matrices over commutative polynomial rings are used to represent linear systems, an idea can at least trace back to Rosenbrock (1970) \cite{Rosenbrock1970}. To see this, let us revisit. The dynamic behavior of a string with an interior mass can be depicted by the equations

\begin{equation}\label{SM}
\begin{cases}
\phi_1(t)~+\psi_1(t)-~\phi_2(t)-~\psi_2(t)=0,&\\
\dot{\phi_1}(t)+\dot{\psi_1}(t)~+\eta_1\phi_1(t)-\eta_1\psi_1(t)-\\~~~~~~~~~~~~~~~~~~~~\eta_2\phi_2(t)+\eta_2\psi_2(t)=0,&\\
\phi_1(t-2h_1)+\psi_1(t)-u(t-h_1)=0,&\\
\phi_2(t)+\psi_2(t-2h_2)-v(t-h_2)=0.
\end{cases}
\end{equation}
here, $h_1,\,h_2,\,\eta_1,\,\eta_2$ are positive constants, as
usual, the dot indicates the time derivative (see \cite{Rosenbrock1970}
for more detailed physical interpretations of the notions).
Introducing the differential operator $\partial=\frac{\mathrm
d}{\mathrm dt}$ and the two time-delay operators
$\sigma_1,\,\sigma_2$ given by
$$
\partial f(t):=\dot f(t),\quad\sigma_1f(t):=f(t-h_1),\quad
\sigma_2f(t):=f(t-h_2)
$$
leads to the commutative (the commutativeness stems from the fact
that the differential operator and the delay operator swap)
multivariate polynomial ring $R=\mathbb R[\partial, \sigma_1,
\sigma_2]$ and the multivariate polynomial matrix
$$T=\begin{bmatrix}
1& ~1 & -1 & -1 & 0 ~& 0\\
\partial+\eta_1~ & ~\partial-\eta_1~ &~\eta_2 ~&~ \eta_2~ & 0 ~& 0\\
\sigma_1^2 ~& 1 ~& 0~& 0 ~& -\sigma_1~ & 0\\
0 ~& 0 ~& 1~ & \sigma_2^2 ~& 0 ~& -\sigma_2
\end{bmatrix}$$
Consequently, the original delay-differential equations system are
reformulated into an abstract linear system of the form
$$
T\cdot x(t)=0
$$
where $x(t)=[\phi_1(t)\,\,\, \psi_1(t)\,\,\, \phi_2(t)\,\,\,
\psi_2(t)\,\,\, u(t)~\, v(t)]^T$ indicates the system state at
time $t$.

 It is not surprising that equivalent reductions of $T$
simplify the investigation of the original system. More often than
not, the reduced new system contains fewer equations or unknowns.
Among which a typical reduction is Serre's  reduction that a matrix is equivalent to its Smith normal form. For
instance, if one can show that a square multivariate polynomial
matrix $T\in R^{\,l\times l}$ is equivalent to the Smith normal
form
$$
S=\begin{bmatrix}
I_{l-1} & 0\\
0 & {\rm det} T
\end{bmatrix},
$$
then the reduced equivalent system contains only one equation in one
unknown. The reduced matrix $S$ has good properties and corresponds to the system whose interesting information
 can be   extracted easily,
 which does not  appear clearly in the original forms.

Over the past few decades, it is well known that these smith normal form of polynomial matrices  be investigated from many scholars\cite{Mounier1998,Boudellioua2013,Frost1979,Lee1983,Bachelier2017,Antoniou2019,Frost1986,Lin1999,Lin2001,Lin2005,Lin2006,Li2017,Lu2017,Liu2014,Wang2007,Wang2008,Lu2017,Zheng2023} for instance: In 1970, Rosenbrock confirmed that all univariate polynomials are equivalent to its smith normal form \cite{Rosenbrock1970}. And in 1981, Frost and Storey gave an example where a binary polynomial matrix is not equivalent to its Smith normal form\cite{Frost1981}. It is extremely difficult to completely solve the equivalence problem of matrices, which means that for general two multivariate polynomial matrices, we do not have a general method to determine whether they are equivalent, and there is no general algorithm. The limitations of this theory and algorithm prompt us to study the construction of multivariate polynomial matrices, for example, weakly linear multivariate polynomial matrices, quasi weakly linear multivariate polynomial matrices, etc.

The motivation for studying structured multivariate polynomial matrices lies in the fact that the multivariate polynomial matrices that appear in practical problems often have specific intrinsic structures. For example, in some control systems, multivariate polynomials that characterize stability have the maximum common factor of their maximum order sub expressions that is linear with respect to a certain variable. This means that once efficient equivalent algorithms are proposed, we have the hope of solving one or even a class of problems. 

Lin studied the smith equivalence conditions of a particular class of multivariate polynomial matrices which $det F(\boldsymbol x)=x_1-f(x_2,\cdots,x_n)$\cite{Lin2006}, then this special matrix is improved by Li in 2017\cite{Li2017}.\\

Let $F(\boldsymbol x)\in K^{l\times m}[\boldsymbol x],(l\leq m)$ be an $n$D polynomial matrix,  $F(\boldsymbol x)$ is said to be a weakly linear polynomial matrix if the greatest common divisor (g.c.d.) of the $l\times l$ minors of $F(\boldsymbol x)$ is $d^q$, where $d=x_1-f(x_2,\cdots,x_n)$, $q$ is positive integer. This kind of weakly linear matrices is very important  in engineering application. Li studied the Smith normal forms and factorization for some weakly linear matrices, they derived some necessary and sufficient conditions on the equivalence of such matrices are all the the $(l-1)\times (l-1)$ minors generate the unit ideal [19]. However, this sufficient and necessary condition only applies to matrices with special Smith normal form where the first l-1 elements on the main diagonal of the Smith normal form are 1 and has limitations. In 2024, Liu further extended this type of matrix and provided sufficient and necessary conditions, which no longer have limitations \cite{Liu2024}.

$F(\boldsymbol x)$ is said to be a quasi weakly linear polynomial matrix if the greatest common divisor (g.c.d.) of the $l\times l$ minors of $F(\boldsymbol x)$ is $d$, where $d=(x_1-f_1(x_2,\cdots,x_n))^{q_1}(x_2-f_2(x_3,\cdots,x_n))^{q_2}$, where $q_1$ and $q_2$ are positive integers. Li provides a necessary and sufficient condition for a quasi weak linear polynomial matrix to be equivalent to its Smith normal form, where the first l-1 elements on the main diagonal of the Smith normal form are 1 [21].

So far, we find that although the equivalence of multivariate polynomial matrices has achieved quite satisfactory results, there is no hope that the equivalence problem can be solved completely. Thus it is natural for us to consider the following problems on weakly linear polynomial matrices.\\

{\bf Problem 1:}\quad Let $F(\boldsymbol x)\in  K^{l\times m}[\boldsymbol x](l\leq m)$ be of full row rank, $d_l(F)=(x_1-f_1(x_2,\cdots,x_n))^p(x_2-f_2(x_3,\cdots,x_n))^q$,
where  $d_l(F)$ is the g.c.d of the $l\times l$ minors of $F(\boldsymbol x)$,  $p,q$ are positive integers.
When is the $F(\boldsymbol x)$  equivalent to its Smith normal form?

{\bf Problem 2:}\quad Let $F(\boldsymbol x)\in  K^{l\times m}[\boldsymbol x](l\leq m)$ be of normal rank $r$,   $d_r(F)=(x_1-f_1(x_2,\cdots,x_n))^p(x_2-f_2(x_3,\cdots,x_n))^q$,
where  $d_r(F)$ is the g.c.d of the $r\times r$ minors of $F(\boldsymbol x)$,  $p,q$ are positive integers. When is the $F(\boldsymbol x)$ equivalent to its Smith normal form? Assume $F(\boldsymbol x)$ is equivalent to its Smith normal form, is there an executable algorithm?

\section{Preliminaries }
    In the following,  $R=\mathbf{K}[x_1,x_2,\cdots,x_n]$ denotes the set of polynomials in $n$ variables $x_1,x_2,\cdots,x_n$ and the coefficients in the field $\mathbf{K}$,  $\mathbf{K}[x_1,x_2,\cdots,x_n]$ is omitted as $\mathbf{K}[\boldsymbol x]$ whenever its omission does not cause confusion. $\overline{\mathbf{K}}$ denotes a algebraic closed field of $\mathbf{K}$.
$\overline{\mathbf{R}}_1\triangleq \mathbf{K}[x_2,\cdots,x_n]$,$\overline{\mathbf{R}}_2\triangleq \mathbf{K}[x_1,x_3,\cdots,x_n]$. $\mathbf{K}^{l\times m}$  denotes the set of $l\times m$ matrices with elements entries in $\mathbf{K}$ and $0_{l\times m}$ denotes the $l\times m$ zero matrix and $E_k$ denotes the $k\times k$ identity matrix.
\begin{definition}
Let $F(\boldsymbol x)\in\mathbf{K}^{l\times m}[\boldsymbol x]$ be of normal rank $r$. Denote $i\times i$ minors of $F(\boldsymbol x)$  by $\alpha_{i1},\cdots,\alpha_{ik_i}$ and denote the greatest common divisor $(g.c.d.)$ of $\alpha_{i1},\cdots,\alpha_{ik_i}$ by $d_i(F)$ where $i=1, \cdots, r$.
$$\beta_{i1}\triangleq \alpha_{i1}/d_i(F),\cdots,\beta_{ik_i}\triangleq \alpha_{ik_i}/d_i(F),$$
are called the reduced i-th order minors of $F(\boldsymbol x)$.
\end{definition}

$J_i(F)$ denotes the ideal generated by $\beta_{i1},\cdots, \beta_{ik_i}$, and  $d(F)\triangleq d_r(F)$.
\begin{definition} Let $F(\boldsymbol x)\in \mathbf{K[x]}^{l\times m},(l\leq m))$ be of full row  rank.  $F(\boldsymbol x)$ is said to be zero left prime (ZLP) if  the $l\times l$ minors of $F(\boldsymbol x)$ generate the unit ideal $\mathbf{R}$.
\end{definition}

\begin{definition} Let $F(\boldsymbol x)\in \mathbf{K[\boldsymbol x]}^{l\times m}$, $l\leq m$, the Smith normal form of $F(\boldsymbol x)$ is $$S=(diag\{\Phi_i\}\ \ 0_{l\times(m-l)}),$$
where
\begin{equation*}
\Phi_i=\left\{
\begin{aligned}
&d_i/d_{i-1},     &&1\leq i\leq r\\
&0,     &&r<i\leq m
\end{aligned}
\right.,
\end{equation*}
$r$ is the normal rank of $F(\boldsymbol x)$, $d_0\equiv 1$, $d_i$ is the g.c.d. of the $i\times i$ minors of $F(\boldsymbol x)$ and $\Phi_i$ has the following property 
 $$\Phi_1\mid\Phi_2,\ \Phi_2\mid\Phi_3,\cdots,\Phi_{r-1}\mid\Phi_r.$$
\end{definition}
\begin{definition} Let $F_1(\boldsymbol x)$, $F_2(\boldsymbol x)$$\in\mathbf{K[\boldsymbol x]}^{l\times m}$, if there exist two unimodular matrices $M(\boldsymbol x)\in K[\boldsymbol x]^{l\times l}$ and $N(\boldsymbol x)\in K[\boldsymbol x]^{m\times m}$ such that $F_2(\boldsymbol x)=M(\boldsymbol x)\cdot F_1(\boldsymbol x)\cdot N(\boldsymbol x)$, then we said $F_1(\boldsymbol x)$ and $F_2(\boldsymbol x)$ are equivalent.
\end{definition}

As long as the omission of parameter does not cause confusion, omit it. For example, we use F to represent $F(\boldsymbol x)$.
We first introduce several important lemmas.

\section{lemma}

\begin{lemma}\cite{Liu2024} Suppose $F=F_1\cdot F_2$ with $F, F_2\in \mathbf{K}^{l\times m}[\boldsymbol x], F_1\in \mathbf{K}^{l\times l}[\boldsymbol x]$. If all the $k\times k$ minors of $F$ have no common zeros for some $k (k\leq l)$, then all the $k\times k$ minors of $F_i,(i=1,2)$ have no common zeros.
\end{lemma}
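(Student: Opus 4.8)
The plan is to reduce the statement to a single application of the Cauchy--Binet formula, which expresses every $k\times k$ minor of a product in terms of $k\times k$ minors of the two factors. Writing $[l]=\{1,\dots,l\}$, $[m]=\{1,\dots,m\}$, and letting $F[I,J]$ denote the submatrix of $F$ on rows $I$ and columns $J$, I would first record that for any $I\in\binom{[l]}{k}$ and $J\in\binom{[m]}{k}$ (all such minors exist because $k\le l$),
$$
\det F[I,J]=\sum_{S\in\binom{[l]}{k}}\det F_1[I,S]\,\det F_2[S,J],
$$
so that each $k\times k$ minor of $F$ is a sum whose every summand is a $k\times k$ minor of $F_1$ multiplied by a $k\times k$ minor of $F_2$.

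Next I would argue by contradiction on the common zero set in $\overline{\mathbf{K}}^n$, using that ``having no common zeros'' means this set is empty. Suppose the $k\times k$ minors of $F_1$ shared a common zero $\boldsymbol{x}_0$, so that $\det F_1[I,S](\boldsymbol{x}_0)=0$ for all $I,S\in\binom{[l]}{k}$. Then in the identity above every summand carries a vanishing factor $\det F_1[I,S]$, whence $\det F[I,J](\boldsymbol{x}_0)=0$ for all $I,J$; this makes $\boldsymbol{x}_0$ a common zero of the $k\times k$ minors of $F$, contradicting the hypothesis. The argument for $F_2$ is identical: a common zero of all the $\det F_2[S,J]$ would force every summand, hence every $\det F[I,J]$, to vanish there, again a contradiction. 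This disposes of both factors $i=1,2$ simultaneously.

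The proof is short once the formula is available, so the only real obstacle is the index bookkeeping in Cauchy--Binet---making sure the inner summation variable $S$ ranges over the $k$-subsets of the shared index set $[l]$ (the columns of $F_1$, equivalently the rows of $F_2$), and that the hypothesis $k\le l$ is precisely what guarantees all the relevant $k\times k$ minors are well defined. One could alternatively phrase ``no common zeros'' through the Nullstellensatz as the minors generating the unit ideal, and observe that each minor of $F$ lies in the ideal generated by the $k\times k$ minors of each $F_i$; but the contrapositive zero-set version above is the cleanest route and avoids any ideal-membership computation.
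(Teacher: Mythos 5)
Your proof is correct: the Cauchy--Binet expansion $\det F[I,J]=\sum_{S}\det F_1[I,S]\,\det F_2[S,J]$ (valid precisely because $k\le l$) immediately shows that any common zero of the $k\times k$ minors of either factor would be a common zero of the $k\times k$ minors of $F$, which is the contrapositive of the claim. The paper states this lemma without proof, citing \cite{Liu2024}, and your argument is the standard one used there, so there is nothing to add.
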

\begin{lemma}\cite{Liu2024} Suppose $F\in \mathbf{K}^{l\times m}[\boldsymbol x]$ has normal rank r. If all the $r\times r$ reduced minors of $F$ generate $K[\boldsymbol x]$, then there exist a $ZLP$ polynomial matrix $H\in\overline{\mathbf{K}}^{(l-r)\times l}$ which satisfies $H\cdot F=0_{(l-r) \times m}$.
\end{lemma}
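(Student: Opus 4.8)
The plan is to realize the desired annihilator as the complementary part of a unimodular completion of a rank factorization of $F$. Passing to $\overline{\mathbf K}$, the hypothesis that the reduced $r\times r$ minors generate $\mathbf K[\boldsymbol x]$ means, by the Nullstellensatz, that they have no common zero; equivalently the ideal generated by all $r\times r$ minors of $F$ equals the principal ideal $(d)$ with $d=d_r(F)$. I would first reduce the lemma to producing a factorization $F=G F_0$ in which $G\in\overline{\mathbf K}[\boldsymbol x]^{l\times r}$ is zero right prime (ZRP, the column analogue of ZLP) and $F_0\in\overline{\mathbf K}[\boldsymbol x]^{r\times m}$ has full row rank $r$. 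Indeed, given such a $G$, Quillen--Suslin lets me complete it to a unimodular matrix $U\in\overline{\mathbf K}[\boldsymbol x]^{l\times l}$ having $G$ as its last $r$ columns; writing $U^{-1}=\left(\begin{smallmatrix}H\\ H'\end{smallmatrix}\right)$ with $H$ the first $l-r$ rows, the identity $U^{-1}U=E_l$ gives $HG=0$, while $H$, being $l-r$ rows of a unimodular matrix, is ZLP. Then $HF=HGF_0=0$, as required.

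The natural candidate for the column space of $G$ is the saturation $P$ of the column space of $F$ inside $\overline{\mathbf K}[\boldsymbol x]^l$, i.e.\ the smallest submodule containing the columns of $F$ whose quotient is torsion-free. If $P$ is free with basis matrix $G$, then $F=GF_0$ with $F_0=G^{-L}F$ for a left inverse $G^{-L}$, and Cauchy--Binet gives $\alpha_{I,J}(F)=\det G[I]\cdot\det F_0[J]$ for the $r\times r$ minors. Since $P$ is saturated, the content $\gcd_I\det G[I]$ is a unit, so $d=\gcd_J\det F_0[J]$ and each reduced minor factors as $\beta_{I,J}=\det G[I]\cdot b_J$ with $\gcd_J b_J=1$. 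Hence the ideal generated by the $\beta_{I,J}$ is the product $(\{\det G[I]\})\cdot(\{b_J\})$, and the hypothesis that it equals $\mathbf K[\boldsymbol x]$ forces $(\{\det G[I]\})=\mathbf K[\boldsymbol x]$, i.e.\ $G$ is ZRP. In the codimension-one case $l=r+1$ this already finishes the proof with no freeness issue: the left syzygy module is a saturated reflexive module of rank $1$ over the UFD $\overline{\mathbf K}[\boldsymbol x]$, hence free, and the same Cramer/Pl\"ucker bookkeeping shows that its primitive generator $v$ has $\gcd_i v_i=1$ and entries whose ideal equals that of the reduced minors, so $v$ generates $\mathbf K[\boldsymbol x]$ and $H=v^{\mathsf T}$ is ZLP.

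The main obstacle is the general case $l-r\ge 2$, where the syzygy module need not be free (e.g.\ the kernel of $\overline{\mathbf K}[x,y,z]^3\to\overline{\mathbf K}[x,y,z]$ given by $(x,y,z)$ is reflexive of rank $2$ but not free), so I must show that $P$ is not merely free but a free \emph{direct summand}, equivalently that $V=\overline{\mathbf K}[\boldsymbol x]^l/P$ is projective; Quillen--Suslin then upgrades projective to free and supplies the basis $G$. I would prove projectivity through the Fitting-ideal criterion, reducing it to local freeness of $V$ of rank $l-r$ at every prime $\mathfrak p$: where $d\notin\mathfrak p$ some $r\times r$ minor of $F$ is a unit, so the column space is already a local direct summand; where $d\in\mathfrak p$ the no-common-zero property gives a reduced minor $\beta_{I,J}\notin\mathfrak p$, so that after dividing by $d$ a maximal minor stays a unit, which is exactly what prevents the extra rank collapse and yields local freeness. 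This localization-and-patching step, essentially a Quillen--Suslin induction on the number of variables, is the technical heart, and Lemma 3.1 is used along the way to ensure the no-common-zero property descends to the factors. That the hypothesis cannot be weakened is shown by $F=\bigl(\begin{smallmatrix}xy & y^2\\ x^2 & xy\end{smallmatrix}\bigr)$, whose reduced minors generate only $(x^2,xy,y^2)\neq\overline{\mathbf K}[x,y]$ and which admits no ZLP left annihilator. Once $G$ is in hand, completing it to a unimodular matrix and reading off the first $l-r$ rows of the inverse produces the ZLP matrix $H$ with $HF=0$.
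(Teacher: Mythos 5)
This lemma is quoted from \cite{Liu2024} without proof, so your argument can only be compared with the route the paper's own toolkit suggests, and yours is genuinely different. The short derivation available here quotes the factorization lemmas cited from \cite{Wang2004} and \cite{Liu2014}: since $J_r(F)=\mathbf{R}$, write $F=GF_1$ with $F_1\in\mathbf{R}^{r\times m}$ ZLP and $J_r(G)=\mathbf{R}$; applying the Lin--Bose-type factorization to $G^T$ gives $G=G_3^TG_2^T$ with $G_3^T$ zero right prime, and completing $G_3^T$ to a unimodular matrix by Quillen--Suslin and reading off the complementary rows of the inverse yields the ZLP annihilator --- exactly the endgame of your first paragraph. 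You instead re-prove the needed ZRP factorization from scratch: take $P$ the saturation of the column module, show $V=\overline{\mathbf{K}}[\boldsymbol x]^l/P$ is projective via local freeness, extract a free basis matrix $G$ by Quillen--Suslin, and use Cauchy--Binet bookkeeping to force $G$ to be ZRP. That bookkeeping is correct: saturatedness does make the content $\gcd_I\det G[I]$ a unit (if a prime $p$ divided all maximal minors, a nonzero vector $v$ with $Gv\equiv 0 \bmod p$ would contradict saturation plus injectivity of $G$), $\gcd_{I,J}(u_Iv_J)=\gcd_Iu_I\cdot\gcd_Jv_J$ holds in a UFD, and the product ideal being $\mathbf{R}$ forces each factor to be $\mathbf{R}$. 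Your approach buys self-containedness at the price of redoing what the quoted factorization theorems encapsulate.

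The one place where you assert rather than prove is the decisive local-freeness claim at a prime $\mathfrak{p}\ni d$. It is true, and completable along exactly the line you indicate: choose $I,J$ with $\beta_{I,J}\notin\mathfrak{p}$, restrict to the columns $J$ (this does not change the saturation, since the quotient of the two column modules is torsion), and with $A=F[I,J]$ compute $F[:,J]\,\mathrm{adj}(A)=d\,G_0$, where $G_0$ has top block $\beta_{I,J}E_r$ and remaining entries equal to signed reduced minors; since $\beta_{I,J}$ is a unit in $R_\mathfrak{p}$, the columns of $G_0$ span a free direct summand of $R_\mathfrak{p}^l$, and the identity $F[:,J]=G_0\,(A/\beta_{I,J})$ over $R_\mathfrak{p}$ shows this summand is precisely $P_\mathfrak{p}$. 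Note also that once local freeness is in hand, no ``Quillen--Suslin induction on the number of variables'' is needed for patching: $V$ is finitely presented, so locally free implies projective, the exact sequence splits, and Quillen--Suslin enters only as the black box ``finitely generated projective implies free,'' valid over $\overline{\mathbf{K}}[\boldsymbol x]$ since the theorem holds over any field. So your plan is sound and fillable, but as written the technical heart is a sketch, not a proof; either carry out the adjugate computation above or shortcut the whole construction through the factorization lemmas the paper already quotes.
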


\begin{lemma}\cite{Liu2024} Let $F,A,B \in \mathbf{R}^{l\times l}$, $F=A\cdot B$. For some $k(1\leq k\leq l)$, if $d_k(A)=d_k(F), J_k(F)=\mathbf{R}$, then $J_k(A)=\mathbf{R}, d_k(B)=1,J_k(B)=\mathbf{R}$.
\end{lemma}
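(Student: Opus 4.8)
The plan is to reduce everything to the Cauchy--Binet expansion of the $k\times k$ minors of the product $F=A\cdot B$. Write the $k\times k$ minor of a matrix $M$ with row set $I$ and column set $J$ (each of cardinality $k$) as $M[I\,|\,J]$. By the Cauchy--Binet formula,
$$F[I\,|\,J]=\sum_{|L|=k}A[I\,|\,L]\,B[L\,|\,J],$$
the sum running over all size-$k$ subsets $L\subseteq\{1,\dots,l\}$. This single identity drives the whole argument.

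First I would establish $d_k(B)=1$. Since $d_k(A)$ divides every minor $A[I\,|\,L]$ and $d_k(B)$ divides every $B[L\,|\,J]$, the product $d_k(A)\,d_k(B)$ divides each summand, hence divides each $F[I\,|\,J]$, giving the classical divisibility $d_k(A)\,d_k(B)\mid d_k(F)$. Because $J_k(F)=\mathbf{R}$ is well defined, $F$ has rank at least $k$ and $d_k(F)\neq 0$; substituting the hypothesis $d_k(A)=d_k(F)$ yields $d_k(F)\,d_k(B)\mid d_k(F)$, and cancelling the nonzero factor $d_k(F)$ in the integral domain $\mathbf{R}$ forces $d_k(B)$ to be a unit, i.e. $d_k(B)=1$.

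Next I would extract the two ideal statements from the same expansion, now divided by $d_k(F)$. Using $d_k(A)=d_k(F)$ and $d_k(B)=1$, dividing the Cauchy--Binet identity by $d_k(F)$ gives
$$\frac{F[I\,|\,J]}{d_k(F)}=\sum_{|L|=k}\frac{A[I\,|\,L]}{d_k(A)}\,B[L\,|\,J],$$
in which the $A[I\,|\,L]/d_k(A)$ are exactly the reduced $k$-th order minors of $A$ (the generators of $J_k(A)$) and the $B[L\,|\,J]$ are exactly the reduced $k$-th order minors of $B$ (the generators of $J_k(B)$, since $d_k(B)=1$). Reading the right-hand side as an $\mathbf{R}$-combination of the reduced minors of $A$ shows that each reduced minor of $F$ lies in $J_k(A)$, hence $J_k(F)\subseteq J_k(A)$; reading it as an $\mathbf{R}$-combination of the reduced minors of $B$ shows $J_k(F)\subseteq J_k(B)$. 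Since $J_k(F)=\mathbf{R}$, both $J_k(A)=\mathbf{R}$ and $J_k(B)=\mathbf{R}$ follow at once.

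The argument is essentially bookkeeping around Cauchy--Binet; the only point requiring care is the normalization of the greatest common divisors (defined up to a unit of $\mathbf{K}$), so that the divisibility $d_k(F)\,d_k(B)\mid d_k(F)$ legitimately forces $d_k(B)$ to be a nonzero constant rather than merely a divisor. Establishing $d_k(F)\neq 0$, which is guaranteed by $J_k(F)=\mathbf{R}$ being meaningful, is what licenses the cancellation, and is the one hypothesis I would verify explicitly before dividing.
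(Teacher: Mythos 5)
Your proof is correct. Note that the paper itself gives no proof of this lemma --- it is imported verbatim from the cited reference Liu (2024) --- so there is no internal argument to compare against; your Cauchy--Binet route is the standard one for statements of this type and is, in substance, self-contained where the paper is not. The three conclusions do follow exactly as you arrange them: $d_k(A)\,d_k(B)\mid d_k(F)$ from the expansion, cancellation of the nonzero $d_k(F)$ in the domain $\mathbf{R}$ giving $d_k(B)=1$ (with the unit-normalization caveat you correctly flag), and then dividing the expansion by $d_k(F)=d_k(A)$ to exhibit each reduced $k$-th minor of $F$ as an $\mathbf{R}$-combination of the reduced $k$-th minors of $A$, respectively of $B$, whence $J_k(F)\subseteq J_k(A)$ and $J_k(F)\subseteq J_k(B)$. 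The one point I would make fully explicit rather than leaving under ``well-definedness'' is the chain $d_k(F)\neq 0 \Rightarrow d_k(A)=d_k(F)\neq 0$ and, after cancellation, $d_k(B)=1\neq 0$, which is precisely what guarantees that the reduced minors of $A$ and of $B$ (not just of $F$) exist, so that $J_k(A)$ and $J_k(B)$ are meaningful objects in the conclusion.
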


Remark: Let $\alpha_1,\cdots, \alpha_q$ be the $r\times r$  minors of $F(x_1,x_2,\ldots,x_n)$, and  $\beta_1,\cdots, \beta_q$ are the $r\times r$  minors of $F(0,x_2,\ldots,x_n)$;
 We assumed the first $r\times r$  minor of $F(x_1,x_2,\ldots,x_n)$ is:
 $$\begin{aligned}
&\alpha_1(x_1,x_2,\cdots ,x_n)= \\
&\begin{vmatrix}
                             a_{11}(x_1,x_2,\cdots ,x_n)&  \cdots &  a_{1r}(x_1,x_2,\cdots ,x_n)\\
                               a_{21}(x_1,x_2,\cdots ,x_n)&  \cdots &  a_{2r}(x_1,x_2,\cdots ,x_n)\\
                                \vdots&  \cdots &  \vdots\\
                              a_{r1}(x_1,x_2,\cdots ,x_n)&  \cdots &  a_{rr}(x_1,x_2,\cdots ,x_n)\\
                          \end{vmatrix}.
                         \end{aligned}$$\\
 and the first $r\times r$  minor of $F(0,x_2,\ldots,x_n)$ is:
 $$\begin{aligned}
&\beta_1(0,x_2,\cdots ,x_n)= \\
&\begin{vmatrix}
                             a_{11}(0,x_2,\cdots ,x_n)&  \cdots &  a_{1r}(0,x_2,\cdots ,x_n)\\
                               a_{21}(0,x_2,\cdots ,x_n)&  \cdots &  a_{2r}(0,x_2,\cdots ,x_n)\\
                                \vdots&  \cdots &  \vdots\\
                              a_{r1}(0,x_2,\cdots ,x_n)& \cdots &  a_{rr}(0,x_2,\cdots ,x_n)\\
                          \end{vmatrix}.
                         \end{aligned}$$
thus $\alpha_1(0,x_2,\cdots ,x_n)=\beta_1(0,x_2,\cdots ,x_n)$, generally, $\alpha_i(0,x_2,\cdots ,x_n)=\beta_i(0,x_2,x_3,\cdots ,x_n), i=1,\cdots,q$;\\

\begin{lemma} Suppose $b_1(x_1,\cdots ,x_n),\cdots, b_q(x_1,\cdots ,x_n)$;
$c_1(0,x_2,x_3,\cdots ,x_n),\cdots, c_q(0,x_2,x_3,\cdots ,x_n)$ be the $r\times r$ reduce minors of $F(x_1,x_2,\ldots,x_n)$ and $F(0,x_2,\ldots,x_n)$, respectively.
Then  $c_i(0,x_2,x_3,\cdots ,x_n)$ is a divisor of $b_i(0,x_2,\cdots ,x_n), i=1,\cdots,q$.
\end{lemma}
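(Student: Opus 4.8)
The plan is to reduce the whole statement to one elementary fact about greatest common divisors in the unique factorization domain $\mathbf{K}[x_2,\ldots,x_n]$: for a nonzero $a$ and any family $f_1,\ldots,f_q$ one has $\gcd(af_1,\ldots,af_q)=a\cdot\gcd(f_1,\ldots,f_q)$ up to a unit. First I would fix notation by writing $d=d_r(F)$ for the g.c.d. of the $r\times r$ minors $\alpha_1,\ldots,\alpha_q$ of $F(x_1,\ldots,x_n)$, so that by the definition of reduced minors $\alpha_i=d\,b_i$ with $\gcd(b_1,\ldots,b_q)=1$ in $\mathbf{K}[\boldsymbol x]$.

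Next I would specialize $x_1=0$. By the Remark preceding the statement, $\alpha_i(0,x_2,\ldots,x_n)$ is precisely the corresponding $r\times r$ minor $\beta_i$ of $F(0,x_2,\ldots,x_n)$ (the rows and columns chosen to form $\alpha_i$ and $\beta_i$ match), so the identity $\alpha_i=d\,b_i$ specializes to $\beta_i(0,x_2,\ldots,x_n)=d(0,x_2,\ldots,x_n)\,b_i(0,x_2,\ldots,x_n)$. Here I would record that $d(0,x_2,\ldots,x_n)\neq 0$, since otherwise every $\beta_i$ would vanish and the reduced minors $c_i$ of $F(0,x_2,\ldots,x_n)$ would not be defined; in the intended applications this is automatic because $d$ is a product of powers $(x_1-f_1)^p(x_2-f_2)^q$.

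Now set $g=\gcd\big(b_1(0,x_2,\ldots,x_n),\ldots,b_q(0,x_2,\ldots,x_n)\big)$. Applying the gcd-factoring fact to the family $\beta_i=d(0,x_2,\ldots,x_n)\,b_i(0,x_2,\ldots,x_n)$ gives $d_r\big(F(0,x_2,\ldots,x_n)\big)=\gcd(\beta_1,\ldots,\beta_q)=d(0,x_2,\ldots,x_n)\cdot g$. Dividing each $\beta_i$ by this g.c.d. to form the reduced minors of $F(0,x_2,\ldots,x_n)$ then yields $c_i(0,x_2,\ldots,x_n)=\beta_i/\big(d(0,x_2,\ldots,x_n)\,g\big)=b_i(0,x_2,\ldots,x_n)/g$. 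Since $g$ divides each $b_i(0,x_2,\ldots,x_n)$ by its very definition, this rearranges to $b_i(0,x_2,\ldots,x_n)=g\cdot c_i(0,x_2,\ldots,x_n)$, which is exactly the claim that $c_i$ is a divisor of $b_i(0,x_2,\ldots,x_n)$.

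The only genuine content is the gcd-factoring identity combined with the fact that the evaluation $x_1=0$ commutes with forming the minors and factoring out $d$. The subtlety I would keep in mind is that specialization can \emph{enlarge} the g.c.d. of the reduced minors, so that $g$ need not equal $1$ even though $\gcd(b_1,\ldots,b_q)=1$ before setting $x_1=0$; it is precisely this possible extra factor $g$ that makes $c_i$ a proper divisor rather than exactly equal. Beyond making the nonvanishing of $d(0,x_2,\ldots,x_n)$ explicit, I expect no real obstacle.
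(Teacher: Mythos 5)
Your proposal is correct and follows essentially the same route as the paper: specialize at $x_1=0$, use the fact that minors of $F(0,x_2,\ldots,x_n)$ agree with the specialized minors of $F$, factor out $d(0,x_2,\ldots,x_n)\neq 0$, and cancel. The only cosmetic difference is that you identify the cofactor explicitly as $g=\gcd\bigl(b_1(0,x_2,\ldots,x_n),\ldots,b_q(0,x_2,\ldots,x_n)\bigr)$ via the identity $\gcd(af_1,\ldots,af_q)=a\gcd(f_1,\ldots,f_q)$, whereas the paper introduces it as an unspecified $h$ with $d_1=d(0,x_2,\ldots,x_n)\cdot h$ obtained from divisibility --- the two are the same element up to a unit, and your version makes the possible enlargement of the g.c.d.\ under specialization slightly more transparent.
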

\begin{proof}Let $b_1^{'}(x_1,x_2,\cdots ,x_n),\cdots, b_q^{'}(x_1,x_2,\cdots ,x_n)$ are the $r\times r$  minors of $F(x_1,x_2,\ldots,x_n)$, and $d(x_1,x_2,\cdots ,x_n)$ is the g.c.d of  $b_1^{'},\cdots, b_q^{'}$. $c_1^{'}(0,x_2,\cdots ,x_n),\cdots, c_q^{'}(0,x_2,\cdots ,x_n)$ are the $r\times r$ minors of $F(0,x_2,\ldots,x_n)$,  and $d_1(0,x_2,x_3,\cdots ,x_n)$ is the g.c.d of  $c_1^{'},\cdots, c_q^{'}$. By remark, $$b_i^{'}(0,x_2,\cdots ,x_n)=c_i^{'}(0,x_2,x_3,\cdots ,x_n).$$
Since $d(x_1,\cdots, x_n) | b_i^{'}(x_1,\cdots ,x_n), i=1,\cdots,q,$ we have $d(0,x_2,\cdots ,x_n) | b_i^{'}(0,x_2,\cdots ,x_n)$. Hence $$d(0,x_2,\cdots ,x_n) | c_i^{'}(0,x_2,\cdots ,x_n).$$  
Since  $d_1(0,x_2,\cdots ,x_n)$ is the g.c.d of  $c_i^{'}(0,x_2,\cdots ,x_n) $, we have $d(0,x_2,\cdots ,x_n) | d_1 (0,x_2,x_3,\cdots ,x_n).$ So there exist $h(0,x_2,\cdots ,x_n)$ such that $$d_1(0,x_2,x_3,\cdots ,x_n)=d(0,x_2,\cdots ,x_n)\cdot h(0,x_2,\cdots ,x_n).$$
Furthermore,
 
 $d(0,x_2,\cdots ,x_n)\cdot b_i(0,x_2,\cdots ,x_n)$
 
 $=d_1(0,x_2,\cdots ,x_n)\cdot c_i(0,x_2,\cdots ,x_n)$

$=d(0,x_2,\cdots ,x_n)\cdot h(0,x_2,\cdots ,x_n)\cdot c_i(0,x_2,x_3,\cdots ,x_n),$
\\where $h(0,x_2,\cdots ,x_n)\in K[x_2,\cdots,x_n].$ \\Since $d(0,x_2,\cdots ,x_n)\neq 0$, so $c_i(0,x_2,x_3,\cdots ,x_n)$ is a divisor of $b_i(0,x_2,\cdots ,x_n)$, where $i=1,\cdots,q$.

\end{proof}
For problem 1, in order to simplify the whole proof, we first concentrate on the equivalence problem of matrix which with determinant is the product of univariates, i.e. $d(F)=x_{1}^p\cdot x_{2}^q$.

\begin{lemma} Let $F\in \mathbf{K[x_1,\cdots,x_n]}^{l\times m}$ be of normal full row rank and $d(F)=x_{1}^p\cdot x_{2}^q$, and  p,q is a positive integer. If $x_{i}\nmid d_{l-r}(F)$, $J_{l-r}(F)=\mathbf{R}$, and $x_{i}\mid d_{l-r+1}(F)$, then there exists  unimodular matrix $ W\in\overline{\mathbf{R}}_i^{l\times l} ,i=1,2$ such that\\
$$
W\cdot F= 
\left(
   \begin{array}{cccccc}
    1& & & & &\\
    & \ddots& & & &\\
    & & 1& & &\\
    & & & x_i& &\\
    & & & & \ddots&\\
    & & & & &x_i\\
 \end{array}
 \right)
 \cdot F_1;\\
$$

or If $x_{i}\nmid d_{l-r}(F),i=1,2$, $J_{l-r}(F)=\mathbf{R}$, and $x_{1}x_{2}\mid d_{l-r+1}(F)$,
then there exists  unimodular matrix $V\in\overline{\mathbf{R}}_1^{l\times l},U\in\overline{\mathbf{R}}_2^{l\times l}$ such that $U\cdot F=$\\
$$
\small\left(
   \begin{array}{cccccc}
    1& & & & &\\
    & \ddots& & & &\\
    & & 1& & &\\
    & & & x_2& &\\
    & & & & \ddots&\\
    & & & & &x_2\\
 \end{array}
 \right)
\
 \cdot V \cdot\\
\
\left(
   \begin{array}{cccccc}
   1& & & & &\\
    & \ddots& & & &\\
    & & 1& & &\\
    & & & x_1& &\\
    & & & & \ddots&\\
    & & & & &x_1\\
\end{array}
 \right)
\
 \cdot F_1,\\
$$
where $F_{1}\in \mathbf{K[x]}^{l\times m}$.
\end{lemma}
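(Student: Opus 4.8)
The plan is to establish the single-variable factorization first and then derive the two-variable one by applying it twice. Write $\rho = l-r$ and regard $F$ as a matrix over $\overline{\mathbf{R}}_i$ with $x_i$ as distinguished variable; set $\bar F := F|_{x_i=0}\in\overline{\mathbf{R}}_i^{l\times m}$. The first step is to pin down $\mathrm{rank}\,\bar F$. Since substitution commutes with taking minors, the $k\times k$ minors of $\bar F$ are the $k\times k$ minors of $F$ evaluated at $x_i=0$. The hypothesis $x_i\mid d_{\rho+1}(F)$ forces every $(\rho+1)$-minor of $F$ to be divisible by $x_i$, hence to vanish at $x_i=0$, so $\mathrm{rank}\,\bar F\le\rho$; the hypothesis $x_i\nmid d_{\rho}(F)$ produces a $\rho$-minor not divisible by $x_i$, hence nonzero at $x_i=0$, so $\mathrm{rank}\,\bar F\ge\rho$. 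Thus $\bar F$ has normal rank exactly $\rho=l-r$ over $\overline{\mathbf{R}}_i$.

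The second, and most delicate, step is to transfer the primeness hypothesis $J_{\rho}(F)=\mathbf R$ to $\bar F$, i.e.\ to show the reduced $\rho$-th order minors of $\bar F$ generate $\overline{\mathbf{R}}_i$. This is exactly what Lemma 3.4 provides: writing $b_i$ for the reduced $\rho$-minors of $F$ and $c_i$ for those of $\bar F$, that lemma gives $b_i|_{x_i=0}=h\,c_i$ for a single common factor $h\in\overline{\mathbf{R}}_i$. A relation $\sum g_i b_i=1$ in $\mathbf K[\boldsymbol x]$ (available since $J_\rho(F)=\mathbf R$) specializes at $x_i=0$ to $\sum (g_i|_{x_i=0})\,b_i|_{x_i=0}=1$, whence $h\cdot\sum (g_i|_{x_i=0})\,c_i=1$; so $h$ is a unit and the $c_i$ generate $\overline{\mathbf{R}}_i$. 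Then $\bar F$ satisfies the hypotheses of Lemma 3.2 over the polynomial ring $\overline{\mathbf{R}}_i$, yielding a zero-left-prime matrix $\bar H\in\overline{\mathbf{R}}_i^{r\times l}$ with $\bar H\,\bar F=0$.

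The third step turns $\bar H$ into $W$. Because $\bar H$ is ZLP over the polynomial ring $\overline{\mathbf{R}}_i$, the Quillen--Suslin theorem lets us complete it to a unimodular $W=\begin{pmatrix} W_1\\ \bar H\end{pmatrix}\in\overline{\mathbf{R}}_i^{l\times l}$. Since $\bar H$ is free of $x_i$ and $\bar H\,\bar F=0$, every entry of the bottom $r$ rows of $WF$, namely $\bar H F$, vanishes at $x_i=0$ and is therefore divisible by $x_i$; writing $\bar H F=x_i G$ gives
$$ WF=\begin{pmatrix} W_1 F\\ x_i G\end{pmatrix}=\begin{pmatrix} E_{l-r} & 0\\ 0 & x_i E_r\end{pmatrix}\begin{pmatrix} W_1 F\\ G\end{pmatrix},$$
which is the asserted factorization with $F_1=\begin{pmatrix} W_1 F\\ G\end{pmatrix}\in\mathbf K[\boldsymbol x]^{l\times m}$.

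For the two-variable case, apply the single-variable result first with $i=2$ (legitimate since $x_2\mid x_1x_2\mid d_{\rho+1}(F)$, $x_2\nmid d_\rho(F)$ and $J_\rho(F)=\mathbf R$), obtaining $UF=\mathrm{diag}(E_{l-r},x_2E_r)\cdot F'$ with $U$ unimodular over $\overline{\mathbf{R}}_2$. One must then verify that $F'$ still meets the hypotheses for an $x_1$-extraction: because the extracted factor $x_2$ is coprime to $x_1$ and $U$ is unimodular (so $d_k(UF)=d_k(F)$), the $x_1$-exponents of the invariant factors are unaffected, giving $x_1\nmid d_\rho(F')$ and $x_1\mid d_{\rho+1}(F')$, while $J_\rho(F')=\mathbf R$ is inherited through the factorization $UF=\mathrm{diag}(E_{l-r},x_2E_r)\cdot F'$ via Lemmas 3.1 and 3.3. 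Applying the single-variable result to $F'$ with $i=1$ then gives $VF'=\mathrm{diag}(E_{l-r},x_1E_r)\cdot F_1$, and substituting back produces the stated composite factorization. I expect the main obstacle to be this transfer of the primeness ideal and divisibility data: controlling how the reduced minors and $J_\rho$ behave under the specialization $x_i=0$ and under the first extraction — precisely the point where Lemmas 3.3 and 3.4 must be invoked with care — whereas the completion in step three, although it rests on the deep Quillen--Suslin theorem, is routine once $\bar H$ is available.
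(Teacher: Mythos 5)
Your proposal is correct and follows essentially the same route as the paper's own proof: specialize at $x_i=0$ to pin the rank at $l-r$, transfer $J_{l-r}(F)=\mathbf{R}$ to the specialized matrix via Lemma 3.4, apply Lemma 3.2 and Quillen--Suslin to complete the ZLP annihilator to a unimodular $W$ extracting the $x_i$ factor, and then handle the two-variable case by a second extraction from $F'$ using the coprimality of $x_1,x_2$ and Lemma 3.3 to carry over the divisibility and primeness hypotheses. Your only deviations are cosmetic improvements --- deducing that the common factor $h$ in $b_i|_{x_i=0}=h\,c_i$ is a unit by specializing a B\'ezout relation (where the paper argues via common zeros), and invoking Lemma 3.3 directly for $J_{l-r}(F')=\mathbf{R}$ where the paper argues by contradiction.
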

\begin{proof} We assumed $x_{2}\nmid d_{l-r}(F)$,$x_2 \mid d_{l-r+1}(F)$, it follows that rank
$F(x_1,0,x_3,\ldots,x_n) \leqslant l-r$. because $x_2\nmid d_{l-r}(F)$, we obtain that $d_{l-r}(F(x_1,0,x_3,\ldots,x_n)) \neq 0$, so $ rank(F(x_1,0,x_3,\ldots,x_n))=l-r$.

let $e_1(x_1,x_2\cdots ,x_n),\cdots,e_t(x_1,x_2\cdots ,x_n)$ and $y_1(x_1,0,x_3,\cdots ,x_n),$ $\cdots, y_t(x_1,0,x_3,\cdots ,x_n)$ 
be the $(l-r)\times(l-r)$ reduce minors of $F(x_1,\ldots,x_n)$ and $F(x_1,0,x_3,\ldots,x_n)$, respectively. According the lemma 3.4 we have that $y_i(x_1,0,x_3,\cdots ,x_n)$ is a divisor of $e_i(x_1,0,x_3,\cdots ,x_n)$, where $i=1,\cdots,t.$ Since $J_{l-r}(F)=\mathbf{R}$, $e_1,\cdots,e_t$ have no common zeros, then $e_1(x_1,0,x_3,\ldots,x_n),$\\$\cdots,e_t(x_1,0,x_3,\ldots,x_n)$  also have no common zeros, so $y_1(x_1,0,x_3,\cdots ,x_n),$ $\cdots, y_t(x_1,0,x_3,\cdots ,x_n)$ also have no common zeros, i.e. the $(l-r)\times(l-r)$ reduce minors of $F(x_1,0,x_3,\ldots,x_n)$ have no common zeros.

According to lemma 3.2, there exists a $ZLP$ polynomial matrix $H \in\overline{\mathbf{R}}_2^{r\times l}$ which satisfies $H \cdot F(x_1,0,x_3,\cdots,x_n)=0_{r \times m}$. By Quillen-Suslin Theorem, we can complete H into an $l\times l$ unimodular matrix $W(x_1,0,x_3,\cdots,x_n)$, we have that r rows of the matrix $W\cdot F(x_1,0,x_3,\cdots,x_n)$ are the zero polynomials. So, there is the common divisor $x_2$ in the r rows of $W \cdot F(x_1,0,x_3,\cdots,x_n)$. So 
$$
W \cdot F= 
\left(
   \begin{array}{cccccc}
    1& & & & &\\
    & \ddots& & & &\\
    & & 1& & &\\
    & & & x_2& &\\
    & & & & \ddots&\\
    & & & & &x_2\\
 \end{array}
 \right)
 \cdot F_1\\
$$
(the proof of $x_1$ is same as $x_2$).

If $x_{i}\nmid d_{l-r}(F),i=1,2$, $J_{l-r}(F)=\mathbf{R}$, and $x_{1}x_{2}\mid d_{l-r+1}(F)$, let $W\cdot F=D$, where\\

$$
D=
\left(
   \begin{array}{cccccc}
    1& & & & &\\
    & \ddots& & & &\\
    & & 1& & &\\
    & & & x_2& &\\
    & & & & \ddots&\\
    & & & & &x_2\\
   \end{array}
 \right)
\
 \cdot G\\
\
,$$
because W is unimodular matrix, so that $F\sim D$, according to lemma $3.3$, we have $d_k(F)=d_k(D), J_k(F)=R\Leftrightarrow J_k(D)=R$. Thus $x_1x_2\mid d_{l-r+1}(D), J_{l-r+1}(D)=R$. let $a_1(x_1,x_2,\cdots ,x_n),\cdots, a_s(x_1,x_2,\cdots ,x_n)$  be the $(l-r+1)\times(l-r+1)$ minors of G, thus $a_1^{'}(x_1,x_2,\cdots ,x_n),\cdots, a_s^{'}(x_1,x_2,\cdots ,x_n)$ be the $(l-r+1)\times(l-r+1)$ minors of D, and $ x_2|a_i^{'}(x_1,x_2,\cdots ,x_n)$. Since $x_1x_2\mid a_i^{'}$, $x_1\mid a_i$, i.e. $x_1\mid d_{l-r+1}(G)$. So rank $G(0,x_2,x_3,\ldots,x_n)\leq l-r$, and $x_1\nmid d_{l-r}(G)$, thus $d_{l-r}(G(0,x_2,x_3,\ldots,x_n)) \neq 0$, so rank $G(0,x_2,x_3,\ldots,x_n)=l-r$. We then show that $J_{l-r}(G)=\mathbf{R}$, If otherwise, all the $(l-r)\times (l-r)$ reduce minors of $G$ have common zeros, this implies that all the $(l-r)\times (l-r)$ reduce minors of $D$ have common zeros, then $J_{l-r}(D)\neq \mathbf{R}$, which leads to a contradiction.

let $b_1(x_1,x_2,\cdots, x_n),\cdots, b_q(x_1,x_2,\cdots ,x_n)$ and $c_1(0,x_2,x_3,\cdots ,x_n),$ $\cdots, c_q(0,x_2,x_3,\cdots ,x_n)$ be the $(l-r)\times(l-r)$ reduce minors of $G[x_1,x_2,\ldots,x_n]$ and $G[0,x_2,\ldots,x_n]$, respectively. It is straightforward that $c_i(0,x_2,x_3,\cdots ,x_n)$ is a divisor of $b_i(0,x_2,\cdots ,x_n),(i=1,\cdots,q)$. thus the $(l-r)\times(l-r)$ reduce minors of $G[0,x_2,\ldots,x_n]$ have no common zeros.

So ,there exists an  unimodular matrix $V_1\in\overline{\mathbf{R}}_1^{l\times l}$ such that\\
$$
V_{1}\cdot G =
\left(
   \begin{array}{cccccc}
    1& & & & &\\
    & \ddots& & & &\\
    & & 1& & &\\
    & & & x_1& &\\
    & & & & \ddots&\\
    & & & & &x_1\\
   \end{array}
 \right)
\
 \cdot F_1 \\
\
,$$
Setting $U=W,V=V_1^{-1}$, we obtain that\\
$$\small U F=
\left(
   \begin{array}{cccccc}
    1& & & & &\\
    & \ddots& & & &\\
    & & 1& & &\\
    & & & x_2& &\\
    & & & & \ddots&\\
    & & & & &x_2\\
   \end{array}
 \right)
\cdot
  V 
\cdot
\left(
   \begin{array}{cccccc}
   1& & & & &\\
    & \ddots& & & &\\
    & & 1& & &\\
    & & & x_1& &\\
    & & & & \ddots&\\
    & & & & &x_1\\
   \end{array}
 \right)
\cdot
  F_1
\
.$$
Note that U and V are unimodular matrices.
\end{proof}

\begin{lemma}
Let $F\in \mathbf{K[x]}^{l\times l}$ be of full row rank, $d(F)=x_1^{p}x_2^{q}$,  where  $p,q$ is a positive integer.  If there exist two subsets $\{i_1,\ i_2,\ \cdots\ ,\ i_m\}$ and $\{j_1,\ j_2,\ \cdots\ ,\ j_m\}$ of $\{1,\ 2,\ \cdots\ ,\ l\}$ such that $x_1\nmid$
det$F\left(
      \begin{array}{ccccc}
        i_1\ i_2\ \cdots \ i_m\\
    j_1\  j_2\ \cdots\  j_m\\
      \end{array}
    \right)$,\\
$x_1\mid$
det$F\left(
      \begin{array}{cccccc}
        i_1\ i_2 \ \cdots\ i_m\ k_{m+1}\\
p_1\ p_2\ \cdots \ p_m\ p_{m+1}\\
      \end{array}
    \right)$ for any $k_{m+1}(k_{m+1}\neq i_1,i_2,\cdots,i_m),$ and any permutation $p_1p_2\cdots p_mp_{m+1}$ of $1,2,\cdots,l$. Then $x_1\mid d_{m+1}(F)$.
\end{lemma}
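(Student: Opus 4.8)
The plan is to reduce everything modulo $x_{1}$ and convert each divisibility statement into a statement about rank over a field. Since $x_{1}$ is irreducible in $R=\mathbf{K}[x_{1},\ldots,x_{n}]$, a polynomial $g$ satisfies $x_{1}\mid g$ if and only if $g(0,x_{2},\ldots,x_{n})=0$. Because the determinant is a polynomial in the entries, it commutes with the evaluation homomorphism $x_{1}\mapsto 0$, so each $k\times k$ minor of $F_{0}:=F(0,x_{2},\ldots,x_{n})$ is exactly the reduction of the corresponding minor of $F$. Hence $x_{1}\mid d_{m+1}(F)$ is equivalent to the vanishing of \emph{every} $(m+1)\times(m+1)$ minor of $F_{0}$, which is in turn equivalent to $\operatorname{rank}_{L}F_{0}\le m$, where $L=\mathbf{K}(x_{2},\ldots,x_{n})$ is the fraction field of $\overline{\mathbf{R}}_{1}$. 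So it suffices to establish this rank bound.

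Next I would read the two hypotheses through this dictionary. The first hypothesis, $x_{1}\nmid\det F\binom{i_{1}\cdots i_{m}}{j_{1}\cdots j_{m}}$, says that the $m\times m$ minor of $F_{0}$ on rows $i_{1},\ldots,i_{m}$ and columns $j_{1},\ldots,j_{m}$ is a nonzero element of $L$; consequently rows $i_{1},\ldots,i_{m}$ of $F_{0}$ are linearly independent over $L$ and $\operatorname{rank}_{L}F_{0}\ge m$. The second hypothesis says that for every index $k\notin\{i_{1},\ldots,i_{m}\}$ and every choice of $m+1$ columns, the $(m+1)\times(m+1)$ minor of $F_{0}$ built from rows $\{i_{1},\ldots,i_{m},k\}$ vanishes; here the clause ``any permutation $p_{1}\cdots p_{m+1}$'' is interpreted as ranging over all ordered $(m+1)$-tuples of distinct columns, so that (up to sign) every $(m+1)$-column subset is covered, which is precisely what the argument requires.

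The decisive step is then immediate. Fix any $k\notin\{i_{1},\ldots,i_{m}\}$ and look at the $(m+1)\times l$ block of $F_{0}$ on rows $i_{1},\ldots,i_{m},k$. By the second hypothesis all its $(m+1)\times(m+1)$ minors are zero, so its rank over $L$ is at most $m$; combined with the independence of rows $i_{1},\ldots,i_{m}$, this forces row $k$ into the $L$-span of those $m$ rows. As $k$ is arbitrary, every row of $F_{0}$ lies in that $m$-dimensional span, so $\operatorname{rank}_{L}F_{0}=m$. Therefore every $(m+1)\times(m+1)$ minor of $F_{0}$ vanishes, and by the reduction dictionary above this yields $x_{1}\mid d_{m+1}(F)$.

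I do not expect a serious obstacle: the entire content is in recognizing the reformulation modulo $x_{1}$, after which it is one elementary rank computation. The only points needing care are bookkeeping ones, namely verifying that the hypothesis genuinely ranges over \emph{all} $(m+1)$-column subsets (a single subset would not force the row block to drop rank), and that the $m\times m$ and $(m+1)\times(m+1)$ minors of $F_{0}$ really are the mod-$x_{1}$ reductions of the corresponding minors of $F$. It is worth remarking that neither the full-row-rank assumption nor the special shape $d(F)=x_{1}^{p}x_{2}^{q}$ enters this argument; only the two displayed minor conditions are used.
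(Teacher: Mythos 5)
Your proposal is correct and is essentially the paper's own argument: the matrix $H=(h_{ij})$ that the paper obtains by writing $a_{ij}=b_{ij}x_1+h_{ij}$ is exactly your $F_0=F(0,x_2,\ldots,x_n)$, and the paper likewise shows rows $\gamma_{i_1},\ldots,\gamma_{i_m}$ of $H$ are independent over $\mathbf{K}(x_2,\ldots,x_n)$ while every adjoined row is dependent, forcing all $(m+1)\times(m+1)$ minors of $H$ to vanish and hence $x_1\mid d_{m+1}(F)$. Your observations that the hypothesis must range over all $(m+1)$-column choices and that the full-rank and $d(F)=x_1^px_2^q$ assumptions are not actually used are accurate side remarks, not deviations from the paper's route.
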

\begin{proof}
We can write every entry $a_{ij}$ of $F(\boldsymbol x)$ as $a_{ij}=b_{ij}x_1+h_{ij} $,where $h_{ij}\in K[x_2,\cdots,x_n],i , j=1,2\cdots,l$. Setting $H\triangleq (h_{ij})^{l\times l},$ according to Laplace Theorem, we have
 $$detF\left(
        \begin{array}{cccc}
          q_1&q_2 &\cdots &q_k\\
          p_1&p_2&\cdots &p_k\\
        \end{array}
      \right)$$
    $$=f_kx_1^{k}+\cdots +f_1x_1^{1}+det H\left(
        \begin{array}{cccc}
          q_1&q_2 &\cdots &q_k\\
          p_1&p_2&\cdots &p_k\\
        \end{array}
      \right).      $$
Since $$x_1 \nmid detF\left(
        \begin{array}{cccc}
          i_1&i_2 &\cdots &i_m\\
          j_1&j_2&\cdots &j_m\\
        \end{array}
      \right),$$
 then $$detH\left(
        \begin{array}{cccc}
          i_1&i_2 &\cdots &i_m\\
          j_1&j_2&\cdots &j_m\\
        \end{array}
      \right)\neq 0,$$
consider row vectors $ \gamma_1,\gamma_2,\cdots,\gamma_l$ of H as in l-dimension vector space $K^{1\times l}(x_2\cdots,x_n)$,we see that $ \gamma_{i_1},\gamma_{i_2},\cdots,\gamma_{i_m}$ are linearly independent. Note that
 $$ x_1 \mid detF\left(
        \begin{array}{ccccc}
          i_1&i_2 &\cdots &i_m &i_{m+1}\\
          j_1&j_2&\cdots &j_m &j_{m+1}\\
        \end{array}
      \right),$$
then $$detH\left(
        \begin{array}{ccccc}
          i_1&i_2 &\cdots &i_m  &i_{m+1}\\
          j_1&j_2&\cdots &j_m &j_{m+1}\\
        \end{array}
      \right)= 0,$$
so $ \gamma_{i_1},\gamma_{i_2},\cdots,\gamma_{i_m},\gamma_{i_{m+1}}$ are linear dependent, rank$(\gamma_1,\gamma_2,\cdots,\gamma_l)=m$, and all of the $(m+1)\times(m+1) $ minors of H are zeros, therefore $x_1\mid d_{m+1}(F)$.
\end{proof}
remark: The same goes for the rest.

\begin{lemma}\cite{Liu2024} Let $F \in\mathbf{R}^{l\times m}$ ($l\leq m$) be of full row rank, $d(F)=d_1^{p}$, where $d_1=x_1-f_1(x_2,\cdots,x_n)$, $p$ is a positive integer. If $J_k(F)=\mathbf{R}$ for $k=1,2,\cdots,l$, then $F$ is equivalent to its  Smith normal form$[D ~~~ 0_{l\times(m-l)}]$.
where $D=diag\{d_1^{r_1},d_1^{r_2},\cdots,d_1^{r_l}\}$, $p_0\equiv0$, $r_k=p_k-p_{k-1}, d_k(F)=d_1^{p_k}$, $k=1,2,\cdots,l$.
\end{lemma}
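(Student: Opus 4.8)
The plan is to normalise the linear form, reduce the target to producing \emph{any} diagonal form, and then peel off powers of $x_1$ by induction on the number of rows $l$, using the machinery of Lemmas 3.1--3.6.

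First I would apply the $\mathbf{K}$-algebra automorphism of $\mathbf{R}$ sending $x_1\mapsto x_1+f_1(x_2,\cdots,x_n)$ and fixing the other variables. It is invertible, carries $d_1=x_1-f_1$ to $x_1$, sends minors to minors and the unit ideal to itself, and preserves equivalence; it therefore reduces the statement to the case $d_k(F)=x_1^{p_k}$ and $J_k(F)=\mathbf{R}$ for all $k$. Since the Smith normal form is an equivalence invariant uniquely pinned down by the determinantal divisors $d_k$, it then suffices to exhibit an equivalence of $F$ with \emph{some} diagonal matrix whose nonzero entries are powers of $x_1$: matching its determinantal divisors with $x_1^{p_k}$ afterwards forces that diagonal to be $\mathrm{diag}\{x_1^{r_1},\cdots,x_1^{r_l}\}$ up to the permutation that reorders the exponents increasingly.

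The base case $l=1$ is immediate: $J_1(F)=\mathbf{R}$ makes $F/x_1^{p}$ a unimodular row, which Quillen--Suslin sends to $[1\ 0\ \cdots\ 0]$, so $F\sim[x_1^{p}\ 0\ \cdots\ 0]$. For the inductive step set $s=\operatorname{rank}F(0,x_2,\cdots,x_n)$. Combining $J_k(F)=\mathbf{R}$ with $d_k(F)=x_1^{p_k}$ shows $s$ is the largest index with $p_s=0$, so $x_1\nmid d_s(F)$, $J_s(F)=\mathbf{R}$ and $x_1\mid d_{s+1}(F)$. This is exactly the hypothesis of the first case of Lemma 3.5 (with $i=1$ and $l-r=s$): via Lemma 3.4 the reduced $s\times s$ minors of $F(0,x_2,\cdots,x_n)$ inherit the no-common-zero property, Lemma 3.2 produces a $ZLP$ matrix $H\in\overline{\mathbf{R}}_1^{(l-s)\times l}$ with $H\,F(0,x_2,\cdots,x_n)=0$, and completing $H$ to a unimodular $W\in\overline{\mathbf{R}}_1^{l\times l}$ by Quillen--Suslin makes the $l-s$ rows coming from $H$ divisible by $x_1$, so that
$$
W\cdot F=\left(\begin{array}{cc} E_s & 0\\ 0 & x_1E_{l-s}\end{array}\right)\cdot F_1 .
$$
Here $l-s\geq 1$ because $p\geq 1$ forces $s\leq l-1$, so the reduction is genuinely nontrivial.

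It remains to transfer the hypotheses and recombine. Full row rank of $F_1$ is clear, and Lemmas 3.1 and 3.3 (with Lemma 3.6 identifying which $d_k$ acquire the factor $x_1$) give $d_k(F_1)$ a power of $x_1$ and $J_k(F_1)=\mathbf{R}$. The step I expect to be the main obstacle is the \emph{recombination}: the extracted factor $\mathrm{diag}\{E_s,\,x_1E_{l-s}\}$ does not commute with the unimodular matrices that reduce $F_1$, so one cannot simply multiply two diagonal forms together (this difficulty disappears only when $s=0$, where the factor is the scalar $x_1E_l$). To circumvent it I would upgrade the row factorization to a genuine block splitting $F\sim\mathrm{diag}\{E_s,\,x_1F_2\}$ with $F_2\in\mathbf{K[\boldsymbol x]}^{(l-s)\times(m-s)}$, so that the induction applies to the strictly smaller matrix $x_1F_2$. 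Establishing this decoupling over $\mathbf{R}$ is the delicate heart of the argument: over the principal ideal domain $\mathbf{K}(x_2,\cdots,x_n)[x_1]$ the reduction to diagonal form is automatic, and the exact role of the hypotheses $J_k(F)=\mathbf{R}$ for every $k$ — global zero-coprimeness of the reduced minors rather than mere coprimeness — is to remove the common zeros that would otherwise obstruct carrying out the same reduction with $\mathbf{R}$-unimodular matrices through Quillen--Suslin. Once $F\sim\mathrm{diag}\{E_s,\,x_1F_2\}$ is secured, $x_1F_2$ inherits all the hypotheses on strictly fewer rows, the induction closes, and the uniqueness remark of the first paragraph identifies the resulting diagonal form with the asserted Smith normal form.
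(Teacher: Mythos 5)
Your outline up to the decoupling step is sound and matches the known strategy: the normalization by the automorphism $x_1\mapsto x_1+f_1$ is exactly the paper's Lemma 3.14 as used in Theorem 4.3; the reduction to ``any diagonal form with power-of-$x_1$ entries'' via matching determinantal divisors is correct; and the extraction $W\cdot F=\mathrm{diag}\{E_s,\ x_1E_{l-s}\}\cdot F_1$ with $W\in\overline{\mathbf{R}}_1^{l\times l}$, obtained from the rank drop at $x_1=0$ together with Lemmas 3.2, 3.4 and Quillen--Suslin, is precisely the single-variable half of Lemma 3.5. Note that the paper itself gives no proof of this lemma (it is imported from Liu 2024), so the fair comparison is with the analogous two-variable machinery the paper does develop (Lemmas 3.8--3.10 and Theorems 4.1--4.2), and that machinery shows exactly where your proposal stops short.

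The genuine gap is the step you yourself flag and then skip: the upgrade from the row factorization to the block splitting $F\sim\mathrm{diag}\{E_s,\ x_1F_2\}$ is not a technical convenience to be ``secured'' --- it is the entire content of the lemma, and nothing in your proposal proves it. Concretely, after $WF=\mathrm{diag}\{E_s,\ x_1E_{l-s}\}\cdot F_1$ the top $s$ rows remain coupled to the bottom through $F_1$; to split, one must (i) show the top block $A$ satisfies $d_i(A)=d_i(F)$ and $J_i(A)=\mathbf{R}$ for $i\leq s$, which the paper achieves inside Lemma 3.9 only by the specialization trick of setting $x_1=0$ in the B\'ezout identity (legitimate because the top block has no $x_1$); (ii) diagonalize $A$, which Lemma 3.9 does by invoking Theorem 2.5 of an external reference --- essentially an instance of the very lemma you are proving, so your induction must be restructured (e.g., on $p$, or on $l$ with a strengthened statement) to make this self-appeal non-circular; and (iii) clear the residual bottom-left block by elementary operations, which in the paper's argument requires the divisibility ordering of the already-extracted diagonal exponents ($s_1\leq\cdots\leq s_t\leq s$ in Lemma 3.9), a hypothesis your sketch never invokes. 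A secondary flaw: your claim that Lemmas 3.1 and 3.3 transfer $J_k(F_1)=\mathbf{R}$ for all $k$ is wrong as stated, since Lemma 3.3 applies only when $d_k$ of the left factor equals $d_k(F)$; for $\mathrm{diag}\{E_s,\ x_1E_{l-s}\}$ this holds for $k\leq s$ but fails for $k>s$ whenever $p_k>k-s$, which is exactly why the paper proves its transfer statements (Lemma 3.8(1), Lemma 3.9's hypotheses) only up to the threshold index and then re-derives the needed conditions at each iteration rather than asserting them wholesale.
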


\begin{lemma} Let $F,D,C \in \mathbf{R}^{l\times l}, F=D \cdot C,~ d_i(F)=x_1^{p_i}x_2^{q_i}$, $i=1,2,\cdots,m+1$, and
$$D=\left(
   \begin{array}{cccccc}
          x_1^{r_1}x_2^{s_1}&&&&&\\
          &\ddots&&&&\\
          &&x_1^{r_m}x_2^{s_m}&&&\\
          &&&x_1^{r_{m+1}}x_2^{s_{m+1}}E_{l-m}&&\\
\end{array}
 \right)
,$$
where  $r_1\leq r_2 \leq\cdots\leq r_m\leq r_{m+1},s_1\leq s_2 \leq\cdots\leq s_m\leq s_{m+1}$, $p_i=r_1+r_2+\cdots+r_i,q_i=s_1+s_2+\cdots+s_i$, $i=1,2,\cdots,m$.
\\ If $J_m(F)=\mathbf{R}$, $p_m=r_1+r_2+\cdots+r_m,q_m=s_1+s_2+\cdots+s_m$, when $p_{m+1}>r_1+r_2+\cdots+r_m+r_{m+1}$, then $d_m(C)=1$, $J_m(C)=\mathbf{R}$, $x_1\mid d_{m+1}(C)$ or when $q_{m+1}>s_1+s_2+\cdots+s_m+s_{m+1}$, then $d_m(C)=1$, $J_m(C)=\mathbf{R}$,  $x_2\mid d_{m+1}(C)$.
\end{lemma}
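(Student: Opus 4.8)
The plan is to transfer all three conclusions from $F$ to the cofactor $C$ by exploiting the diagonal shape of $D$ together with Lemmas 3.3 and 3.6. The first step is to compute the determinantal divisor $d_m(D)$. Since $D$ is diagonal with $x_1$-exponents $r_1\le\cdots\le r_m\le r_{m+1}$ (the last $l-m$ diagonal entries all carrying $r_{m+1}$, so I adopt the convention $r_i=r_{m+1}$ and $s_i=s_{m+1}$ for $i>m$) and $x_2$-exponents $s_1\le\cdots\le s_{m+1}$, every $m\times m$ minor of $D$ is a product of $m$ diagonal entries, and the gcd of all such products is obtained by taking the $m$ smallest exponents in each variable. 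Because the $r_i$ and $s_i$ increase with the same indexing, the index set $\{1,\dots,m\}$ minimises both sums simultaneously, giving $d_m(D)=x_1^{r_1+\cdots+r_m}x_2^{s_1+\cdots+s_m}=x_1^{p_m}x_2^{q_m}$, which by hypothesis is exactly $d_m(F)$.

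With $d_m(D)=d_m(F)$ and $J_m(F)=\mathbf{R}$ in hand, I would apply Lemma 3.3 (with $A=D$, $B=C$, $k=m$) to obtain at once the first two conclusions $d_m(C)=1$ and $J_m(C)=\mathbf{R}$, valid in both cases. For the divisibility conclusion I treat Case 1 ($p_{m+1}>r_1+\cdots+r_m+r_{m+1}$); Case 2 is symmetric under $x_1\leftrightarrow x_2$, $r\leftrightarrow s$, $p\leftrightarrow q$. The key algebraic fact is that, because $D$ is diagonal, Cauchy--Binet collapses to $F\binom{I}{J}=\big(\prod_{i\in I}x_1^{r_i}x_2^{s_i}\big)\,C\binom{I}{J}=x_1^{\rho(I)}x_2^{\sigma(I)}C\binom{I}{J}$ for every index pair $(I,J)$ of size $m+1$, where $\rho(I)=\sum_{i\in I}r_i$ and $\sigma(I)=\sum_{i\in I}s_i$. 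Taking the $x_1$-adic valuation $v_1$ (a genuine valuation since $x_1$ is prime in $\mathbf{R}$) and using $x_1^{p_{m+1}}\mid d_{m+1}(F)\mid F\binom{I}{J}$ yields $v_1\big(C\binom{I}{J}\big)\ge p_{m+1}-\rho(I)$. This already forces $x_1\mid C\binom{I}{J}$ whenever $\rho(I)<p_{m+1}$, but gives nothing for row sets $I$ whose exponent sum $\rho(I)$ is large; controlling those minors is the main obstacle.

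To overcome it I would first locate a good $m$-minor in the cheapest rows. Since $v_1(d_m(F))=p_m=\min_{I',J'}\big(\rho(I')+v_1(C\binom{I'}{J'})\big)$ while $\rho(I')\ge p_m$ for every $m$-subset $I'$, the minimum can only be attained at some $I'_0$ with $\rho(I'_0)=p_m$ and $v_1(C\binom{I'_0}{J'_0})=0$; that is, there is an $m$-minor of $C$ not divisible by $x_1$ sitting in rows of minimal exponent sum (for instance $\{1,\dots,m\}$). Now any extension $I=I'_0\cup\{k\}$ satisfies $\rho(I)=p_m+r_k\le p_m+r_{m+1}=r_1+\cdots+r_m+r_{m+1}<p_{m+1}$, because $r_k\le r_{m+1}$ for every row $k$; hence the valuation bound gives $x_1\mid C\binom{I'_0\cup\{k\}}{\tilde J}$ for every added row $k$ and every $(m+1)$-column set $\tilde J$. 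Thus $C$ possesses a good $m$-minor all of whose $(m+1)$-fold extensions vanish modulo $x_1$, which is precisely the hypothesis of Lemma 3.6; applying it to $C$ delivers $x_1\mid d_{m+1}(C)$, and the symmetric substitution settles Case 2.

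The delicate points I expect to need careful verification are exactly two: that the minimiser of the valuation identity is forced to have $\rho(I'_0)=p_m$ (this is what keeps every extension strictly below $p_{m+1}$ and is where the standing hypotheses $p_m=r_1+\cdots+r_m$, $J_m(F)=\mathbf{R}$ and the ordering of the exponents all get used), and that the rank argument underlying Lemma 3.6 — passing to the matrix $C(0,x_2,\dots,x_n)$ and showing its row rank is $m$ — still applies to $C$ even though $d(C)$ need not be a pure power of $x_1x_2$. I would handle the latter either by invoking Lemma 3.6 directly or, if one wishes to avoid its extraneous determinantal hypothesis, by rerunning its short linear-algebra argument over the field $\mathbf{K}(x_2,\dots,x_n)$ for the specialisation $x_1=0$.
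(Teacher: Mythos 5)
Your proof is correct and takes essentially the same route as the paper's: Lemma 3.3 applied to $d_m(D)=d_m(F)=x_1^{p_m}x_2^{q_m}$ yields $d_m(C)=1$ and $J_m(C)=\mathbf{R}$, and the diagonal minor identity $F\binom{I}{J}=x_1^{\rho(I)}x_2^{\sigma(I)}C\binom{I}{J}$ combined with Lemma 3.6 yields $x_1\mid d_{m+1}(C)$, exactly as in the paper. The only differences are cosmetic and in your favor: your valuation argument forcing the minimiser to satisfy $\rho(I_0')=p_m$ and $v_1\bigl(C\binom{I_0'}{J_0'}\bigr)=0$ handles uniformly what the paper splits into the two cases $r_1=\cdots=r_{m+1}$ and $r_t<r_{t+1}=\cdots=r_{m+1}$, and your closing caution is also well placed, since the proof of Lemma 3.6 never actually uses the hypotheses $d(F)=x_1^px_2^q$ or full rank (only the rank argument for the specialisation $x_1=0$), so it applies verbatim to $C$ --- just as the paper tacitly assumes.
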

\begin{proof}
Since $d_m(F)=d_m(D)=x_1^{p_m}x_2^{q_m}$, by Lemma 3.3, $d_m(C)=1,J_m(C)=\mathbf{R}$.
We assumed that  $p_m=r_1+r_2+\cdots+r_m, q_m=s_1+s_2+\cdots+s_m$.\\
When $p_{m+1}>r_1+r_2+\cdots+r_m+r_{m+1}$, since\\
$$\begin{aligned}
&& detF\left(
       \begin{array}{cccccc}
         m_1   & m_2  &\cdots &m_q\\
         l_1 &l_2   &\cdots &l_q\\
       \end{array}
     \right) \end{aligned} 
 \begin{aligned} =x_1^{r_{m_1}+\cdots+r_{m_q}}\cdot x_2^{s_{m_1}+\cdots+s_{m_q}}
     \cdot detC\left(
       \begin{array}{cccccc}
         m_1    &\cdots &m_q\\
         l_1    &\cdots &l_q\\
       \end{array}
     \right).
       \end{aligned} (2)$$

If $r_1=r_2=\cdots=r_{m+1} $, since $d_m(C)=1$, then there are $i_1,i_2,\cdots,i_m$; $j_1,j_2,\cdots,j_m$ such that $$x_1\nmid detC\left(
       \begin{array}{cccccccc}
         i_1 ~  & i_2  & \cdots  \cdots &i_m\\
         j_1 ~&j_2 & \cdots   \cdots &j_m\\
       \end{array}
     \right).$$
For any $k_{m+1}$, where $k_{m+1}\neq i_1,i_2,\cdots,i_m$, combined with $r_1=r_2=\cdots=r_{m+1}, ~ r_{i_1}+r_{i_2}+\cdots+r_{i_m}+r_{k_{m+1}}=r_1+r_2+\cdots+r_{m+1}$, we have that
$$\begin{aligned}
     &detF\left(
       \begin{array}{ccccccccc}
         i_1~& i_2&\cdots &~i_m&~k_{m+1} \\
         h_1~&h_2& \cdots&~h_m&~h_{m+1} \\
       \end{array}
     \right) \end{aligned}$$
    $$\begin{aligned} = x_1^{r_1+\cdots+r_{m+1}}\cdot x_2^{s_1+\cdots+s_{m+1}}
    \cdot detC\left(
       \begin{array}{cccccccc}
         i_1 ~ &\cdots&k_{m+1} \\
         h_1~& \cdots&h_{m+1} \\
       \end{array}
     \right),
     \end{aligned}$$
where $h_1,h_2,\cdots, h_{m+1}$ is any permutation of $1,2,\cdots,l$. Since $d_{m+1}(F)=x_1^{p_{m+1}}x_2^{q_{m+1}}$, $p_{m+1}>r_1+r_2+\cdots+r_m+r_{m+1}$, we have
 $$x_1\mid detC\left(
       \begin{array}{ccccccccc}
         i_1~& i_2&\cdots &i_m~&k_{m+1} \\
         h_1~&h_2& \cdots&h_m~&h_{m+1} \\
       \end{array}
     \right).$$
By Lemma  3.6, $x_1\mid d_{m+1}(C)$.

If there is $t \ (1\leq t\leq m)$ such that $r_t<r_{t+1}= r_{t+2}= \cdots= r_{m+1}$, Since $d_m(F)=x_1^{r_1+r_2+\cdots+r_m}x_2^{s_1+s_2+\cdots+s_m}$, from (2), there are $i_{t+1},\cdots,i_m$ and $h_1,h_2,\cdots,h_m$ such that
 $$x_1\nmid detC\left(
                \begin{array}{ccccccc}
                  1 ~& 2 & \cdots & t & i_{t+1} &\cdots & i_m\\
                  h_1~ & h_2 & \cdots & h_t &\  h_{t+1} &\cdots & h_m\\
                \end{array}
              \right).
 $$
Otherwise, $p_m\geq r_1+\cdots+r_m+1$, this is a contradiction. For any $k_{m+1}(k_{m+1}>t,\ k_{m+1}\neq i_{t+1},\cdots,i_m)$, any permutation $j_1,\cdots,j_m,j_{m+1}$.
Since $r_{t+1}=r_{t+2}=\cdots=r_{m+1}$, we have
 $$ \begin{aligned}
     &detF\left(
       \begin{array}{ccccccccc}
         1\ \ \cdots \ \ t\ \ i_{t+1}\ \  \cdots \ \ k_{m+1} \\
         j_1 \ \ \cdots\ \ j_t\ \ j_{t+1}\ \  \cdots \ \ j_{m+1} \\
       \end{array}
     \right)=\\&x_1^{r_1+\cdots+r_{m+1}}\cdot x_2^{s_1+\cdots+s_{m+1}}\cdot detC\left(
       \begin{array}{cccccccc}
         1\  \cdots \  t\ i_{t+1}  \cdots\ \ i_m \ k_{m+1} \\
         j_1\  \cdots \ j_t\  j_{t+1} \ \cdots\  j_m\  j_{m+1} \\
       \end{array}
     \right).
     \end{aligned}$$
Since $d_{m+1}(F)=x_1^{p_{m+1}}x_2^{q_{m+1}}$, we have
 $$x_1\mid detC\left(
       \begin{array}{ccccccccc}
         1&\cdots & t~&i_{t+1}& \cdots&i_m &k_{m+1} \\
         j_1& \cdots~&j_t~&j_{t+1}& \cdots&j_m &j_{m+1} \\
       \end{array}
     \right).$$
By Lemma 3.6 , $x_1\mid d_{m+1}(C)$. 
when $q_{m+1}>s_1+s_2+\cdots+s_m+s_{m+1}$, the proof is same of preceding process, thus $x_2\mid d_{m+1}(C)$.
\end{proof}

\begin{lemma} 
Let $F\in \mathbf{R}^{l\times l}$, $d_i(F)=x_2^{s_1+\cdots +s_i}$,  $J_i(F)=\mathbf{R}$, $i=1,2,...,l$ and $$B=\left(
   \begin{array}{cc}
          E_t&\\
          & x_1E_{l-t}\\
 \end{array}
 \right) \cdot V \cdot\left(
   \begin{array}{cccc}
          x_2^{s_1} &&&\\
          &\ddots &&\\
          & &x_2^{s_t}&\\
          &&& x_2^{s}E_{l-t}\\
 \end{array}
 \right)
.$$ where $s_1\leq \cdots \leq s_t\leq s$, $V\in \overline{\mathbf{R}}_1^{l\times l},$
if $F\sim B\cdot G_1$, then we have

(1) $d_i(B)=x_2^{s_1+\cdots +s_i}$, $J_i(B)=\mathbf{R}$, $i=1,2,...,t.$

(2) $F\sim D\cdot G_1$, where $$D=\left(
   \begin{array}{cccccc}
        x_2^{s_1} &&&\\
          &\ddots &&\\
          & &x_2^{s_t}&\\
          &&& x_1x_2^{s}E_{l-t}\\
          \end{array}
 \right)$$ and $G\in \mathbf{R}^{l\times l}$.
\end{lemma}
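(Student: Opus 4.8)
The plan is to handle the two assertions separately: claim (1) by a direct computation of the minors of $B$ together with Lemma 3.3, and claim (2) by reducing $B$ to $D$ through a ``filtered'' Gaussian elimination and then transporting the resulting equivalence onto $F$. Throughout I write $P=\mathrm{diag}(E_t,x_1E_{l-t})$ and $Q=\mathrm{diag}(x_2^{s_1},\dots,x_2^{s_t},x_2^{s}E_{l-t})$, so that $B=PVQ$ and $D=PQ$, and I use that $V$ is unimodular over $\overline{\mathbf R}_1$ (as produced by Lemma 3.5).

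For (1), since $P$ and $Q$ are diagonal the $k\times k$ minor of $B$ on rows $R$ and columns $C$ equals $x_1^{a(R)}\big(\prod_{c\in C}Q_{cc}\big)\det V[R,C]$, where $a(R)=\#\{r\in R:\ r>t\}$. Because $s_1\le\cdots\le s_t\le s$, the factor $\prod_{c\in C}Q_{cc}$ is always divisible by $x_2^{s_1+\cdots+s_k}$, so $x_2^{s_1+\cdots+s_k}\mid d_k(B)$. Next I would take $R=\{1,\dots,k\}\subseteq\{1,\dots,t\}$, so $a(R)=0$; as $V$ is invertible its first $k$ rows are independent, which yields a nonzero, $x_1$-free $k$-minor. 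Hence $d_k(B)$ is coprime to $x_1$, and from $d_k(B)\mid x_1^{a(R)}\big(\prod_{c\in C}Q_{cc}\big)\det V[R,C]$ I deduce $d_k(B)\mid \big(\prod_{c\in C}Q_{cc}\big)\det V[R,C]$ for all $R,C$, i.e. $d_k(B)\mid d_k(VQ)$. Since $V$ is unimodular, $VQ\sim Q$ over $\overline{\mathbf R}_1$, so $d_k(VQ)=d_k(Q)=x_2^{s_1+\cdots+s_k}$; combining the two divisibilities gives $d_k(B)=x_2^{s_1+\cdots+s_k}$ for $k\le t$. Finally $F\sim BG_1$ gives $d_k(BG_1)=d_k(F)=x_2^{s_1+\cdots+s_k}=d_k(B)$ and $J_k(BG_1)=J_k(F)=\mathbf R$; applying Lemma 3.3 to the factorization $BG_1=B\cdot G_1$ yields $J_k(B)=\mathbf R$ (and, as a by-product, $d_k(G_1)=1$, $J_k(G_1)=\mathbf R$), which is exactly (1).

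For (2) the target is the equivalence $B\sim D$ over $\mathbf R$: once I have unimodular $M,N$ with $MBN=D$, then $B=M^{-1}DN^{-1}$ gives $F\sim BG_1=M^{-1}D(N^{-1}G_1)$, and left-multiplying by the unimodular $M$ yields $F\sim D\,(N^{-1}G_1)=D\,G$ with $G=N^{-1}G_1\in\mathbf R^{l\times l}$, as required. To produce $B\sim D$ I would run an elimination on $B=PVQ$ that stays inside two compatible filtrations: a column operation may add an $x_2^{s_j}$-weighted column to an $x_2^{s_k}$-weighted one only when $s_j\le s_k$ (the multiplier $x_2^{s_k-s_j}$ is then a genuine polynomial, precisely because the exponents are sorted), and a row operation may add $x_1$ times one of the first $t$ rows to one of the last $l-t$ rows (the extra $x_1$ being supplied by the $\mathrm{diag}(E_t,x_1E_{l-t})$ pattern). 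Using the unimodularity of $V$—equivalently, a Quillen--Suslin completion of the zero left prime block $V[\{1,\dots,t\},:]$—together with $J_k(B)=\mathbf R$ from part (1), I would clear the top-right and bottom-left blocks and diagonalize the top $t\times t$ corner to $\mathrm{diag}(x_2^{s_1},\dots,x_2^{s_t})$. Because the last $l-t$ diagonal entries of $Q$ are the single scalar $x_2^{s}$, the commutation obstruction disappears on the bottom block: after the cross terms are removed, the residual $(l-t)\times(l-t)$ corner has determinant a unit times $(x_1x_2^{s})^{l-t}$, so it is $x_1x_2^{s}$ times a unimodular matrix and is equivalent to $x_1x_2^{s}E_{l-t}$. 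Assembling the pieces gives $B\sim D$.

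The main obstacle is the clearing step in (2): showing that the off-diagonal blocks can be eliminated entirely while respecting the two filtrations. This is exactly where the hypotheses are indispensable. The sorted condition $s_1\le\cdots\le s_t\le s$ keeps every elimination multiplier polynomial (otherwise the natural operations would require negative powers of $x_2$), and $J_k(B)=\mathbf R$ prevents the reduction from stalling: a $2\times2$ instance $B=PVQ$ with $J_1(B)=\langle x_1,x_2\rangle\neq\mathbf R$ is already \emph{not} equivalent to the corresponding $D$ (since $J_1$ is an equivalence invariant), so no purely formal absorption of the middle factor $V$ can work. Making the Quillen--Suslin completion cooperate with the column filtration—so that the pivots created in the top corner are units and the induced operations stay polynomial—is the technical heart of the argument, and is where I expect to spend most of the effort.
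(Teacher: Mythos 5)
Your part (1) is correct and is essentially the paper's argument: the paper obtains the upper bound more directly from $d_k(B)\mid d_k(B\cdot G_1)=d_k(F)$, whereas you detour through coprimality to $x_1$ and $d_k(B)\mid d_k(VQ)=d_k(Q)$ --- both routes work, and both finish, as you do, by applying Lemma 3.3 to $F\sim B\cdot G_1$. Your framing of part (2) is also the right one: produce unimodular $M,N$ with $MBN=D$ and set $G=N^{-1}G_1$. The problem is that for the equivalence $B\sim D$ itself you offer a plan rather than a proof, and the plan has two genuine gaps.

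First, the missing idea. The paper does not run a ``filtered Gaussian elimination'' on all of $B$; it isolates the top block $A=(v_1D_1\;\;x_2^sv_2)$ of $B=\left(\begin{smallmatrix} v_1D_1 & x_2^sv_2\\ x_1v_3D_1 & x_1x_2^sv_4\end{smallmatrix}\right)$ and observes that every $k$-minor of $B$ not already a minor of $A$ carries a factor $x_1$, while $d_k(B)$ and the minors of $A$ are $x_1$-free; setting $x_1=0$ in a B\'ezout identity witnessing $J_k(B)=\mathbf{R}$ then yields $d_k(A)=d_k(B)=x_2^{s_1+\cdots+s_k}$ and $J_k(A)=\mathbf{R}$ for $k\le t$. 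This is exactly what licenses invoking the already-established one-variable diagonalization theorem (Theorem 2.5 in \cite{ref17}) to get unimodular $P_1,Q_1$ with $P_1AQ_1=(D_1\;\;0)$; the bottom-left block is then cleared by row operations because each entry in its $j$-th column is divisible by $x_2^{s_j}$ (a $d_t$-minor argument, where sortedness matters). Your substitute --- a Quillen--Suslin completion of $V[\{1,\dots,t\},:]$ plus weight-respecting elementary operations --- does not do this work: completing $V$'s top rows is blind to the weights $x_2^{s_j}$, and nothing in your sketch forces the top corner to diagonalize into exactly $\mathrm{diag}(x_2^{s_1},\dots,x_2^{s_t})$; you flag this as the ``technical heart,'' which is to say it is the statement to be proved, not an argument. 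Second, a step that is false as written: from ``the residual corner has determinant a unit times $(x_1x_2^s)^{l-t}$'' you conclude it is ``$x_1x_2^s$ times a unimodular matrix.'' A determinant condition gives no entrywise divisibility: $\mathrm{diag}(x_1^2x_2^{2s},1)$ has determinant $(x_1x_2^s)^2$ but is not $x_1x_2^s$ times a unimodular matrix. The paper needs, and proves, entrywise divisibility: the factor $x_1$ comes from the block shape (the residual is $x_1v_4'$), and $x_2^s\mid b_{ij}$ for each entry of $v_4'$ is extracted by comparing the $(t+1)$-minors $x_1\det D_1\cdot b_{ij}$ of $C$ against the divisibility $x_1x_2^{s_1+\cdots+s_t+s}\mid d_{t+1}(C)=d_{t+1}(B)$; only after pulling out $x_1x_2^s$ does the determinant computation certify that the remaining cofactor $v_4''$ is unimodular. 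Without these two ingredients your outline for (2) cannot be completed.
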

\begin{proof}
(1) Since $d_i(B\cdot G_1)=d_i(F)$,then $d_i(B)\mid d_i(F)$, and $d_i(F)=x_2^{s_1+\cdots +s_i}$, we have $x_2^{s_1+\cdots +s_i}\mid d_i(B)$, it can be seen that $d_i(B)=x_2^{s_1+\cdots +s_i}=d_i(F)$, by lemma3.3, $J_i(B)=J_i(F)=\mathbf{R},i=1,2,...,t.$

(2) Let $$ V=\left(
   \begin{array}{ccc}
          v_1& v_2\\
          v_3& v_4\\
\end{array}
 \right),$$
where $v_1\in \overline{\mathbf{R}}_1^{t\times t}$, $v_2\in \overline{\mathbf{R}}_1^{t\times (l-t)}$, $v_3\in \overline{\mathbf{R}}_1^{(l-t)\times t}$ , $v_4\in \overline{\mathbf{R}}_1^{(l-t)\times (l-t)}$, therefore $$B=\left(
   \begin{array}{ccc}
          v_1D_1& x_2^sv_2\\
          x_1v_3D_1& x_1x_2^sv_4\\
\end{array}
 \right),$$ where $D_1=\left(
   \begin{array}{cccc}
        x_2^{s_1} &&\\
          &\ddots &\\
          & &x_2^{s_t}\\
          \end{array}
 \right)$. 
 
 Let $A=(v_1D_1,\ \  x_2^sv_2)$, for $i\leq t$, let's set all $i$ minor of $A$as $\alpha_1,..., \alpha_u$, obviously, it is also an $i$ minor of $B$, and all other $i$ minor of $B$ are $\beta_1,..., \beta_v$, so $x_1\mid \beta_j,\ j=1,2,...,v.$ Since $d_i(B)=x_2^ {s_1+s_2+\cdots+s_i}$, then $x_1\mid \beta_j/d_i(B)$, we have $\beta_j/d_i(B)=x_1\cdot w_j, j=1,2,...,v.$ Because $J_i(B)=\mathbf{R}$, there exists $f_k,\ g_j\in \mathbf{R}$ such that $$\sum_{k=1}^uf_k\cdot \alpha_k/d_i(B)+\sum_{j=1}^vg_j\cdot \beta_j/d_i(B)=1$$ then we have  $$\sum_{k=1}^uf_k\cdot \alpha_k/d_i(B)+\sum_{j=1}^vg_j\cdot x_1w_j=1$$ Since $v_1,v_2,D_1,d_i(B)$ all without variables $x_1$, so $\alpha_k$  without variables $x_1$, In the above equation, let $x_1=0$, then $$\sum_{k=1}^uf_k(0,x_2,...,x_n)\cdot \alpha_k/d_i(B)=1$$ From the above equation $d_i(A)=d_i(B)=x_2^{s_1+\cdots+s_i}$, and $J_i(A)=\mathbf{R}, i=1,...,t$, From Theorem 2.5 in \cite{ref17}, therefore, there exists a unimodular matrix $P_1\in \mathbf{R}^{t\times t}$ and $Q_1\in \mathbf{R}^{l\times l}$ such that
$$P_1\cdot A\cdot Q_1=(D_1\ \ 0)$$
Let $$P=\left(
   \begin{array}{cc}
       P_1 &  \\
          & E_{l-t} \\
          \end{array}
 \right),Q_1=\left(
   \begin{array}{cc}
       Q_{11} &Q_{12}  \\
       Q_{21}   & Q_{22} \\
          \end{array}
 \right)$$where $Q_{11}\in \overline{\mathbf{R}}_1^{t\times t}$, $Q_{12}\in \overline{\mathbf{R}}_1^{t\times (l-t)}$, $Q_{21}\in \overline{\mathbf{R}}_1^{(l-t)\times t}$ , $Q_{22}\in \overline{\mathbf{R}}_1^{(l-t)\times (l-t)}$, then $$P\cdot B\cdot Q_1=\left(
   \begin{array}{cc}
       D_1 & 0  \\
       x_1v_3D_1Q_{11}+x_1x_2^sv_4Q_{21} & x_1v_4^{'} \\
          \end{array}
 \right)$$
where $v_4^{'}=v_3D_1Q_{12}+x_2^sv_4Q_{22}.$ Since $s_1\leq \cdots \leq s_t\leq s$, So from elementary transformations, we have
 $$B\sim C=\left(
   \begin{array}{cc}
       D_1 &0  \\
       0   & x_1v_4^{'} \\
          \end{array}
 \right).$$
 let $v_4^{'}=(b_{ij})_{(l-t)\times (l-t)}$, since $d_{t+1}(B)=x_1x_2^{s_1+\cdots +s_t+s}$, and the $t+1$ minor of $C$ is $x_1det D_1\cdot b_{ij}$, $d_{t+1}(C)=d_{t+1}(B)=x_1x_2^{s_1+\cdots +s_t+s}=x_1detD_1x_2^s$, from $d_{t+1}(C)\mid x_1detDb_{ij}$, we have $x_2^s\mid b_{ij}$. Let $v_4^{''}=v_4^{'}/x_2^s$, we can get  
 $$C\sim C_1=\left(
   \begin{array}{cc}
       D_1 &0  \\
       0   & x_1x_2^sv_4^{''} \\
          \end{array}
 \right)=\left(
   \begin{array}{cc}
       D_1 &0  \\
       0   & x_1x_2^sE_{l-t} \\
          \end{array}
 \right)\cdot \left(
   \begin{array}{cc}
       E_t &0  \\
       0   & v_4^{''} \\
          \end{array}
 \right)$$
Since $detC_1=detC=detB=detD_1\cdot (x_1x_2^s)^(l-t)$, then $det\left(
   \begin{array}{cc}
       E_t &0  \\
       0   & v_4^{''} \\
          \end{array}
 \right)=1$ therefore $B\sim C_1\sim D$, $F\sim D\cdot G.$
\end{proof}

If $x_1^k\mid d_1(F)$, $x_1^{k+1}\nmid d_1(F)$, We can consider $F^{'}=F/x_1^k$, Obviously, studying the equivalence between $F$ and its Smith form is equivalent to studying the equivalence between $F^{'}$ and its Smith form, therefore, our general assumption is as follows $x_1\nmid d_1(F)$, $x_2\nmid d_1(F)$, and $x_1\cdot x_2\mid d(F)$, i.e. $d(F)=x_1^px_2^q$, where $p\neq 0,q\neq0.$

\begin{lemma}
Let $$B=\small\left(
   \begin{array}{cccc}
          x_1^{r_1}x_2^{s_1} &&&\\
          &\ddots &&\\
          & &x_1^{r_k}x_2^{s_k}&\\
          &&& x_1^{r}x_2^{s}E_{l-k}\\
 \end{array}
 \right) \cdot V \cdot\left(
   \begin{array}{cc}
          E_k&\\
          & x_1E_{l-k}\\
 \end{array}
 \right)
.$$
If $J_i(B)=\mathbf{R}$, $d_i(B)=x_1^{r_1+\cdots+r_i}x_2^{s_1+\cdots+s_i},i=1,2,...,k.$$V\in \overline{\mathbf{R}}_1^{l\times l}$,$r_1\leq r_2\leq \cdots \leq r_k\leq r$, $s_1\leq s_2\leq \cdots \leq s_k\leq s$,
      then  $$B\sim B_1=\left(
   \begin{array}{cccc}
         x_1^{r_1}x_2^{s_1} &&&\\
          &\ddots &&\\
          & &x_1^{r_k}x_2^{s_k}&\\
          &&&x_1^{r+1}x_2^{s}E_{l-k} \\
          \end{array}
 \right).$$
\end{lemma}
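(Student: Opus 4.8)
The plan is to exploit the explicit factorization together with the remark that the target matrix is itself a product of the two diagonal factors. Write
$$D=\mathrm{diag}(D_1,\,x_1^{r}x_2^{s}E_{l-k}),\quad D_1=\mathrm{diag}(x_1^{r_1}x_2^{s_1},\dots,x_1^{r_k}x_2^{s_k}),\quad P=\mathrm{diag}(E_k,\,x_1E_{l-k}),$$
so that $B=D\,V\,P$ and, crucially, $B_1=D\,P$. Partitioning $V=\begin{pmatrix} v_1 & v_2\\ v_3 & v_4\end{pmatrix}$ conformally and multiplying out gives
$$B=\begin{pmatrix} D_1v_1 & x_1D_1v_2\\ x_1^{r}x_2^{s}v_3 & x_1^{r+1}x_2^{s}v_4\end{pmatrix}.$$
First I record that $V$ is to be taken unimodular over $\overline{\mathbf R}_1$: this is how such a $V$ is produced, via the Quillen--Suslin completions used in Lemma 3.2 and Lemma 3.5, and it is in any case forced, since the case $k=0$ already reads $x_1^{r+1}x_2^{s}V\sim x_1^{r+1}x_2^{s}E_l$. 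Hence $\det V$ is a nonzero constant.

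Next I would split off the first $k$ invariant factors. Using the hypotheses $d_i(B)=x_1^{r_1+\cdots+r_i}x_2^{s_1+\cdots+s_i}$ and $J_i(B)=\mathbf R$ for $i=1,\dots,k$, together with the divisibility chain $x_1^{r_1}x_2^{s_1}\mid\cdots\mid x_1^{r_k}x_2^{s_k}$ coming from $r_1\le\cdots\le r_k$ and $s_1\le\cdots\le s_k$, the reduced--minor ideals being the whole ring lets one reduce the leading block to Smith form (as in the proof of Lemma 3.8 and Theorem 2.5 of \cite{ref17}). This produces unimodular $M,N\in\mathbf R^{l\times l}$ with
$$M\,B\,N=\begin{pmatrix} D_1 & 0\\ 0 & W\end{pmatrix},\qquad W\in\mathbf R^{(l-k)\times(l-k)}.$$

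The technical heart is to show that $x_1^{r+1}x_2^{s}$ divides every entry of $W$, and I would establish this on $B$ itself and transport it through the equivalence, since the $(k+1)$-th determinantal divisor $d_{k+1}$ is an equivalence invariant. Every selection of $k+1$ rows meets the bottom $l-k$ rows (divisible by $x_1^{r}x_2^{s}$) and every selection of $k+1$ columns meets the last $l-k$ columns (divisible by $x_1$), while the bottom-right entries carry $x_1^{r+1}x_2^{s}$. Expanding a $(k+1)\times(k+1)$ minor as a signed sum of transversal products and counting, for each product, the $x_1$- and $x_2$-exponents supplied by the four block types, the inequalities $r_j\le r$, $s_j\le s$ combined with ``at least one bottom row'' and ``at least one last column'' force every term, hence the minor, to be divisible by $x_1^{\,r_1+\cdots+r_k+r+1}x_2^{\,s_1+\cdots+s_k+s}$. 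Thus this monomial divides $d_{k+1}(B)=d_{k+1}(MBN)$; applying it to the minors $\det D_1\cdot W_{ij}$ of the split form and cancelling $\det D_1=x_1^{r_1+\cdots+r_k}x_2^{s_1+\cdots+s_k}$ yields $x_1^{r+1}x_2^{s}\mid W_{ij}$ for all $i,j$. Writing $W=x_1^{r+1}x_2^{s}W'$, a determinant comparison finishes the argument:
$$\det\begin{pmatrix} D_1 & 0\\ 0 & W\end{pmatrix}=\pm\det D\,\det V\,\det P=\pm\,\det D_1\,(x_1^{r+1}x_2^{s})^{l-k}\det V=\det D_1\,(x_1^{r+1}x_2^{s})^{l-k}\det W',$$
so $\det W'=\pm\det V$ is a unit, $W'$ is unimodular, $W\sim x_1^{r+1}x_2^{s}E_{l-k}$, and therefore $B\sim\mathrm{diag}(D_1,\,x_1^{r+1}x_2^{s}E_{l-k})=B_1$.

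The main obstacle I anticipate is the splitting step. The natural analogue of Lemma 3.8 — isolating the top $k$ rows $D_1[\,v_1\ \ x_1v_2\,]$ and reducing that submatrix — fails here, because that block has common zeros along $\{x_1=0\}$ whenever $\det v_1$ is non-constant, so it does not satisfy $J_k=\mathbf R$; one genuinely must split using the primeness of the full matrix $B$. The subsequent exponent bookkeeping on the $(k+1)$-minors is elementary but delicate, since it is exactly where the ordering hypotheses $r_j\le r$ and $s_j\le s$ are consumed.
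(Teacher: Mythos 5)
Your endgame is sound and in places cleaner than the paper's: the transversal/Cauchy--Binet count showing $x_1^{r_1+\cdots+r_k+r+1}x_2^{s_1+\cdots+s_k+s}\mid d_{k+1}(B)$ is exactly the paper's estimate (its exponent $e=r_1+\cdots+r_t+(k-t+1)r+1$ equals $r_1+\cdots+r_k+r+1$ because $r_{t+1}=\cdots=r_k=r$), and once a split $MBN=\mathrm{diag}(D_1,W)$ is in hand, your cancellation $x_1^{r+1}x_2^{s}\mid W_{ij}$ against $\det D_1$, followed by the determinant comparison $\det W'=\pm\det V$, legitimately finishes. Reading $V$ as unimodular is also correct and is indeed forced by the statement.

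The genuine gap is the splitting step, and it is the heart of the lemma, not a technicality. You assert unimodular $M,N$ with $MBN=\mathrm{diag}(D_1,W)$ ``as in the proof of Lemma 3.8 and Theorem 2.5 of \cite{ref17}'', but those tools only handle determinantal divisors that are powers of a \emph{single} linear factor: in Lemma 3.8 the top block $A=(v_1D_1,\ x_2^sv_2)$ is free of $x_1$, its $d_i(A)$ are pure $x_2$-powers, and only for that reason does Theorem 2.5 of \cite{ref17} apply. Here $d_i(B)=x_1^{p_i}x_2^{q_i}$ is mixed, and a ``partial Smith extraction from $J_i(B)=\mathbf{R}$, $d_i(B)$ known, $i\le k$'' for such matrices is essentially the content of Theorems 4.2--4.3 of this paper, which are themselves proved \emph{using} Lemma 3.10 --- so appealing to any such principle is circular, and nothing weaker in the toolkit (Lemma 3.5 extracts only a factor $\mathrm{diag}(E,\,x_iE)$, never a block-diagonal form) delivers it. You correctly diagnose that the naive route --- proving the top $k$ rows $D_1(v_1\ \ x_1v_2)$ are ZLP --- fails when $\det v_1$ vanishes somewhere on $\{x_1=0\}$, but you then leave ``split using the primeness of the full matrix $B$'' as an unproven black box. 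The paper's proof shows what actually works: it splits not at $k$ but at the index $t$ with $r_1\le\cdots\le r_t<r_{t+1}=\cdots=r_k=r$. At that index every $t$-minor of $B$ using a row below the $t$-th acquires a strictly positive extra power of $x_1$ (this is exactly where the ordering hypotheses are consumed), so $J_t(B)=\mathbf{R}$ forces the $t\times l$ block $(v_1\ \ x_1v_2)$ to be ZLP; Quillen--Suslin gives $(v_1\ \ x_1v_2)Q=(E_t\ \ 0)$, row operations clear the lower-left block (using $r_i\le r$ and $s_i\le s_j$), and the lemma is then finished on the lower-right block $MV'$ by showing $x_1\mid d_{k-t+1}(V')$ and $x_1\nmid d_{k-t}(V')$, extracting $\mathrm{diag}(E_{k-t},\,x_1E_{l-k})$ from $V'$ via Lemma 3.5, and absorbing the $x_1$ into the diagonal via Lemma 3.8. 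Until you supply an argument of this kind for your one-shot split at $k$, the proposal is not a proof.
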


\begin{proof}
When $r=0,s=0$, the conclusion is clearly valid. When$r=0, s>0$, by lemma 3.8, consider $B^T$, the conclusion is clearly valid, therefore, consider the case where $r>0$, from $x_1\nmid d_1(F)$, $x_2\nmid d_1(F)$, we have $r_1,r_2,...,r_k, r$ not completely equal, so there exist $t<k$ such that $r_1\leq r_2\leq\cdots \leq r_t<r_{t+1}=\cdots=r_k=r.$
Let $$ V=\left(
   \begin{array}{ccc}
          v_1& v_2\\
          v_3& v_4\\
\end{array}
 \right),$$
where $v_1\in \overline{\mathbf{R}}_1^{t\times k}$, $v_2\in \overline{\mathbf{R}}_1^{t\times {(l-k)}}$, $v_3\in \overline{\mathbf{R}}_1^{{(l-t)}\times k}$, $v_4\in \overline{\mathbf{R}}_1^{{(l-t)}\times {(l-k)}}.$
$$B=\left(
   \begin{array}{cccc}
          x_1^{r_1}x_2^{s_1} &&&\\
          &\ddots &&\\
          & &x_1^{r_k}x_2^{s_k}&\\
          &&& x_1^{r}x_2^{s}E_{l-k}\\
 \end{array}
 \right) \cdot \left(
   \begin{array}{ccc}
          v_1& x_1v_2\\
          v_3& x_1v_4\\
\end{array}
 \right)
.$$

Certificate $(v_1~ x_1v_2)$ is a unimodular row.
Let $t$ minor of $(v_1~ v_2)$ are $\alpha_1,...,\alpha_h$(Obviously, there is no common zero point), then the $t$ minor of $(v_1~ x_1v_2)$ are 
$\alpha_1,...,\alpha_{\beta},x_1^{m_2}\alpha_{\beta+1},...,x_1^{m_s}\alpha_h$, if $(v_1~ x_1v_2)$ is not a unimodular row, thne $\alpha_1,...,\alpha_{\beta},x_1$ have a common zero point $p_0=(0,x_{20},x_{30}...,x_{n0})$, consider $t$ reduced minor of $B$, It can be divided into two categories, one of which is a $t$ reduced minor of matrix $diag\{x_1^{r_1}x_2^{s_1},...,x_1^{r_t}x_2^{s_t}\}\cdot v_1$: i.e. $\alpha_1,...,\alpha_{\beta};$ The other type is other reduced minor, but they all contain $x_1$ as their factor. Therefore, all $t$ reduced minor of $B$ have a common zero point $p_0$, This contradicts $J_t(B)=\mathbf{R}$.

By Quillen-Suslin theorem, there exist a unimodular matrix  $Q\in \overline{\mathbf{R}}_1^{l\times l}$ such that
$(v_1~x_1v_2)\cdot Q=(E_t~0)$, so
$$ B\cdot Q=\left(
   \begin{array}{ccccccccc}
          x_1^{r_1}x_2^{s_1} &&&&&&\\
          &\ddots &&&&&\\
          & &x_1^{r_t}x_2^{s_t}&&&&\\
          x_1^{r}x_2^{s_{t+1}}v_{t+1,1}^{'}&\cdots&x_1^{r}x_2^{s_{t+1}}v_{t+1,t}^{'}&x_1^{r}x_2^{s_{t+1}}v_{t+1,t+1}^{'}&\cdots&x_1^{r}x_2^{s_{t+1}}v_{t+1,l}^{'}&\\
          \cdots&\cdots&\cdots&\cdots&\cdots&\cdots&\\
          x_1^{r}x_2^{s_k}v_{k,1}^{'}&\cdots&x_1^{r}x_2^{s_k}v_{k,t}^{'}&x_1^{r}x_2^{s_k}v_{k,t+1}^{'}&\cdots&x_1^{r}x_2^{s_k}v_{t+1,l}^{'}&\\
         \cdots&\cdots&\cdots&\cdots&\cdots&\cdots&\\
         x_1^{r}x_2^{s}v_{l,1}^{'}&\cdots&x_1^{r}x_2^{s}v_{l,t}^{'}&x_1^{r}x_2^{s}v_{l,t+1}^{'}&\cdots&x_1^{r}x_2^{s}v_{l,l}^{'}&\\
 \end{array}
 \right),
 $$
 By elementary line transformation, we can obtain $$ B\sim C=\left(
   \begin{array}{ccccccccc}
          x_1^{r_1}x_2^{s_1} &&&&&&\\
          &\ddots &&&&&\\
          & &x_1^{r_t}x_2^{s_t}&&&&\\
          0&\cdots&0&x_1^{r}x_2^{s_{t+1}}v_{t+1,t+1}^{'}&\cdots&x_1^{r}x_2^{s_{t+1}}v_{t+1,l}^{'}&\\
          \cdots&\cdots&\cdots&\cdots&\cdots&\cdots&\\
          0&\cdots&0&x_1^{r}x_2^{s_k}v_{k,t+1}^{'}&\cdots&x_1^{r}x_2^{s_k}v_{t+1,l}^{'}&\\
         \cdots&\cdots&\cdots&\cdots&\cdots&\cdots&\\
         0&\cdots&0&x_1^{r}x_2^{s}v_{l,t+1}^{'}&\cdots&x_1^{r}x_2^{s}v_{l,l}^{'}&\\
 \end{array}
 \right),
 $$
Let $ C=\left(
   \begin{array}{cccc}
         x_1^{r_1}x_2^{s_1} &&&\\
          &\ddots &&\\
          & &x_1^{r_t}x_2^{s_t}&\\
          &&&M\cdot V^{'} \\
          \end{array}
 \right)$,where $M=diag\{x_1^rx_2^{s_{t+1}},...,x_1^rx_2^{s_k},x_1^rx_2^s,...,x_1^rx_2^s\}$. Prove the existence of unimodular matrix $P$ such that $$V^{'}=P\cdot \left(
   \begin{array}{cccc}
         E_{k-t} &\\
          & x_1E_{l-k} \\
          \end{array}
 \right)\cdot G$$, where $G\in \mathbf{R}^{{(l-t)}\times (l-t)}.$
 
Let $e=\sum_{i=1}^t r_i+(k-t+1)\cdot r+1$, Since $x_1^e\mid d_{k+1}(B)$, and $B\sim C$, then $x_1^e\mid d_{k+1}(C)$, 
 Let $V^{''}$is $(k-t+1)$ submatrix of $V^{'}$, $N$ is any $(k-t+1) $ submatrix of $M$, visibility $detN=x_1^{(k-t+1)\cdot r}\cdot x_2^h$, let $$C^{'}=\left(
   \begin{array}{cccc}
         x_1^{r_1}x_2^{s_1} &&&\\
          &\ddots &&\\
          & &x_1^{r_t}x_2^{s_t}&\\
          &&&N\cdot V^{''} \\
          \end{array}
 \right)$$ then $detC^{'}$ is $(k+1)$ minor of $C$. So $x_1^e\mid detC^{'}$, since $detC^{'}=x_1^{r_1+\cdots +r_t}x_2^{s_1+\cdots +s_t}\cdot detN \cdot detV^{''}$, $detN= x_1^{(k-t+1)\cdot r}x_2^h$, we have $$(x_1^e=x_1^{r_1+\cdots +r_t+(k-t+1)\cdot r+1})\mid (detC^{'}=x_1^{r_1+\cdots +r_t+(k-t+1)\cdot r}x_2^{s_1+\cdots +s_t+h}\cdot detV^{''})$$
From the above equation, it can be concluded that $x_1\mid detV^{''}$. Arbitrariness of $V^{''}$, so $x_1\mid d_{k-t+1}(V^{'})$, because $J_i(B)=\mathbf{R}$, $d_i(B)=x_1^{r_1+\cdots+r_i}x_2^{s_1+\cdots+s_i},i=1,2,...,k.$, we have $J_{k-t}(V^{'})=1$, then we prove that $x_1\nmid d_{k-t}(V^{'})$, if $x_1\mid d_{k-t}(V^{'})$, then $x_1^{(r_1+\cdots +r_k+1)}\mid (d_k(C)=d_k(B)=x_1^{r_1+\cdots+r_k}x_2^{s_1+\cdots+s_k})$, Obviously contradictory , so by lemma 3.5, we have $$V^{'}=P\cdot \left(
   \begin{array}{cccccc}
    E_{k-t}&\\
    & x_1E_{l-k}\\
 \end{array}
 \right)\cdot G,$$ 
 Substitute to obtain $$M\cdot V^{'}=\left(
   \begin{array}{cccc}
          x_1^rx_2^{s_{t+1}} &&&\\
          &\ddots &&\\
          & &x_1^rx_2^{s_k}&\\
          &&& x_1^rx_2^{s}E_{l-k}\\
 \end{array}
 \right)\cdot P\cdot \left(
   \begin{array}{cccccc}
    E_{k-t}&\\
    & x_1E_{l-k}\\
 \end{array}
 \right)\cdot G$$Since $\left(
   \begin{array}{cccc}
          x_1^rx_2^{s_{t+1}} &&&\\
          &\ddots &&\\
          & &x_1^rx_2^{s_k}&\\
          &&& x_1^rx_2^{s}E_{l-k}\\
 \end{array}
 \right)=x_1^r\cdot \left(
   \begin{array}{cccc}
          x_2^{s_{t+1}} &&&\\
          &\ddots &&\\
          & &x_2^{s_k}&\\
          &&& x_2^{s}E_{l-k}\\
 \end{array}
 \right)$, by lemma3.8, we can get 
 $$M\cdot V^{'}= x_1^r\cdot P^{'}\cdot \left(
   \begin{array}{cccc}
          x_2^{s_{t+1}} &&&\\
          &\ddots &&\\
          & &x_2^{s_k}&\\
          &&& x_1x_2^{s}E_{l-k}\\
 \end{array}
 \right)\cdot G{'}=P^{'}\cdot \left(
   \begin{array}{cccc}
          x_1^rx_2^{s_{t+1}} &&&\\
          &\ddots &&\\
          & &x_1^rx_2^{s_k}&\\
          &&& x_1^{r+1}x_2^{s}E_{l-k}\\
 \end{array}
 \right)\cdot G{'}$$
From $$C=\left(
   \begin{array}{cccc}
         x_1^{r_1}x_2^{s_1} &&&\\
          &\ddots &&\\
          & &x_1^{r_t}x_2^{s_t}&\\
          &&&M\cdot V^{'} \\
          \end{array}
 \right)=\left(
   \begin{array}{cccccc}
   E_{t}&\\
    & P^{'}\\
 \end{array}
 \right)\cdot \left(
   \begin{array}{cccc}
         x_1^{r_1}x_2^{s_1} &&&\\
          &\ddots &&\\
          & &x_1^{r_k}x_2^{s_k}&\\
          &&&x_1^{r+1}x_2^{s}E_{l-k} \\
          \end{array}
 \right)\cdot \left(
   \begin{array}{cccccc}
   E_{t}&\\
    & G^{'}\\
 \end{array}
 \right)$$
 By verifying the determinant, the determinant of $P,G,P^{'},G^{'}$ is a non-zero constant, so it is reversible, then $$B\sim C\sim \left(
   \begin{array}{cccc}
         x_1^{r_1}x_2^{s_1} &&&\\
          &\ddots &&\\
          & &x_1^{r_k}x_2^{s_k}&\\
          &&&x_1^{r+1}x_2^{s}E_{l-k} \\
          \end{array}
 \right).$$
 
 When $r_1\leq r_2\leq\cdots \leq r_k<r$, same as the above proof.
 
\end{proof}

\begin{lemma}
Let $$B=\left(
   \begin{array}{cccc}
          x_1^{r_1}x_2^{s_1} &&&\\
          &\ddots &&\\
          & &x_1^{r_t}x_2^{s_t}&\\
          &&& x_1^{r}x_2^{s}E_{l-t}\\
 \end{array}
 \right) \cdot U \cdot\left(
   \begin{array}{cc}
          E_t&\\
          & x_2E_{l-t}\\
 \end{array}
 \right)
.$$
If $J_t(B)=\mathbf{R}$, $d_t(B)=x_1^{r_1+\cdots+r_t}x_2^{s_1+\cdots+s_t}$, $U\in \overline{\mathbf{R}}_2^{l\times l}$, $r_1\leq r_2\leq \cdots \leq r_t\leq r,s_1\leq s_2\leq \cdots \leq s_t\leq s$;
      then  $$B\sim B_1=\left(
   \begin{array}{cccc}
         x_1^{r_1}x_2^{s_1} &&&\\
          &\ddots &&\\
          & &x_1^{r_t}x_2^{s_t}&\\
          &&&x_1^{r}x_2^{s+1}E_{l-t} \\
          \end{array}
 \right).$$

\end{lemma}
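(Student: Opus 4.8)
The plan is to obtain Lemma 3.10 as the $x_1\leftrightarrow x_2$ mirror image of Lemma 3.9. I would introduce the $\mathbf{K}$-algebra automorphism $\sigma$ of $\mathbf{R}$ that interchanges $x_1$ and $x_2$ and fixes $x_3,\dots,x_n$, acting entrywise on matrices. Since $\sigma$ is a ring automorphism it fixes the unit ideal, commutes with the formation of minors and of their g.c.d. (up to the induced swap of exponents), and preserves matrix equivalence because $\det\sigma(M)=\sigma(\det M)$ is a unit whenever $\det M$ is. In particular $B\sim B_1\iff\sigma(B)\sim\sigma(B_1)$, and as $B_1$ is diagonal $\sigma(B_1)$ is again diagonal, so it suffices to analyze $\sigma(B)$.

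I would first compute $\sigma(B)$. Applying $\sigma$ to the displayed factorization interchanges the $x_1$- and $x_2$-exponents in the left diagonal factor, sends $U\in\overline{\mathbf{R}}_2^{l\times l}$ to $\sigma(U)\in\overline{\mathbf{R}}_1^{l\times l}$, and turns the right-hand factor $\mathrm{diag}(E_t,\,x_2E_{l-t})$ into $\mathrm{diag}(E_t,\,x_1E_{l-t})$. Hence $\sigma(B)$ has exactly the shape treated in Lemma 3.9 under the dictionary $r_i\leftrightarrow s_i$, $r\leftrightarrow s$, $\overline{\mathbf{R}}_1\leftrightarrow\overline{\mathbf{R}}_2$, $k\leftrightarrow t$. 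The two chains $r_1\le\cdots\le r_t\le r$ and $s_1\le\cdots\le s_t\le s$ are merely swapped and both remain non-decreasing, so the ordering hypotheses hold; moreover $J_t(\sigma(B))=\sigma(J_t(B))=\mathbf{R}$ and $d_t(\sigma(B))=\sigma(d_t(B))=x_1^{s_1+\cdots+s_t}x_2^{r_1+\cdots+r_t}$ is precisely the determinantal divisor Lemma 3.9 requires.

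Lemma 3.9 then raises by one the exponent of its distinguished variable (here $x_1$) on the trailing $E_{l-t}$ block, giving
\begin{equation*}
\sigma(B)\sim\mathrm{diag}\!\left(x_1^{s_1}x_2^{r_1},\dots,x_1^{s_t}x_2^{r_t},\,x_1^{s+1}x_2^{r}E_{l-t}\right).
\end{equation*}
Applying $\sigma$ once more and invoking the preservation of equivalence returns
\begin{equation*}
B\sim\mathrm{diag}\!\left(x_1^{r_1}x_2^{s_1},\dots,x_1^{r_t}x_2^{s_t},\,x_1^{r}x_2^{s+1}E_{l-t}\right)=B_1,
\end{equation*}
which is the assertion.

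The deduction itself is pure bookkeeping; the substance is inherited from Lemma 3.9, whose proof carries the real work — the base cases $r=0$ handled through Lemma 3.8 (and its transpose), the verification that the row block $(u_1,\,x_2u_2)$ is zero left prime via $J_t(B)=\mathbf{R}$ together with the reduced-minor analysis, the Quillen--Suslin completion, the degree count on $d_{t+1}$ that produces the extra power of the distinguished variable, and the closing appeal to Lemma 3.5. The one point I would check carefully, and which I regard as the only genuine obstacle, is that the full chain of conditions $J_i(\sigma(B))=\mathbf{R}$ and $d_i(\sigma(B))=x_1^{s_1+\cdots+s_i}x_2^{r_1+\cdots+r_i}$ for all $i\le t$ is available, since Lemma 3.9 is stated with hypotheses at every level whereas Lemma 3.10 records them only at level $i=t$. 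If these intermediate conditions are not already part of the standing construction of $B$, I would recover them from $d_i(D)=x_1^{r_1+\cdots+r_i}x_2^{s_1+\cdots+s_i}$ (the g.c.d. of the $i\times i$ minors of the leading diagonal factor, minimized simultaneously in both variables by the common monotone ordering), combined with the divisibility chain $d_i\mid d_{i+1}$ and the boundary equality $d_t(B)=d_t(D)$, which together pin down each intermediate determinantal divisor and, via Lemma 3.3, the corresponding $J_i=\mathbf{R}$.
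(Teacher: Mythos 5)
Your proposal is correct and is essentially the paper's own argument: the paper disposes of this lemma with the single sentence ``The proof is similar to the proof of Lemma 3.10,'' which is precisely the $x_1\leftrightarrow x_2$ symmetry you make rigorous via the swap automorphism $\sigma$ (preserving units, minors, g.c.d.'s and the ideals $J_i$, and carrying $\overline{\mathbf{R}}_2$ to $\overline{\mathbf{R}}_1$). Your closing caveat is well taken --- the mirror lemma assumes $J_i(B)=\mathbf{R}$ and $d_i(B)=x_1^{r_1+\cdots+r_i}x_2^{s_1+\cdots+s_i}$ at \emph{every} level $i\le t$ while this statement records them only at $i=t$, a sloppiness in the paper that its applications (Theorems 4.1--4.2) sidestep by verifying the full chain before invoking the lemma; just be aware that your fallback repair would not fully close it, since the divisibility chain alone only bounds the intermediate $d_i(B)$ (pinning them down needs a Cauchy--Binet/Laplace argument using the monotone exponent sequences), and Lemma 3.3 transfers $J_i=\mathbf{R}$ \emph{from} a product \emph{to} its factors, so it cannot produce $J_i(B)=\mathbf{R}$ for $i<t$ from the stated hypotheses.
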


\begin{proof}
The proof is similar to the proof of Lemma 3.10.
\end{proof}

\begin{lemma}(\cite{Liu2014}) Let $F\in \mathbf{R}^{l\times m}(l\leq m)$ be of full row rank. If $J_l(F)=\mathbf{R}$, then $F$ admits a zero prime factorization $F=G\cdot F_1$ with $G\in \mathbf{R}^{l\times l}$, $F_1\in \mathbf{R}^{l\times m}$, and $det G=d_l(F)$, $F_1$ is a ZLP matrix.
\end{lemma}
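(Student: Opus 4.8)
\emph{Proof sketch.} The plan is to reduce everything to producing a free, rank-$l$ \emph{direct summand} of $\mathbf R^{1\times m}$ that contains the rows of $F$. Let $N\subseteq\mathbf R^{1\times m}$ be the submodule generated by the $l$ rows of $F$; since $F$ has full row rank these rows are $\mathbf R$-linearly independent, so $N\cong\mathbf R^{l}$ is free of rank $l$. Let $N_1$ be its saturation, $N_1=\{w\in\mathbf R^{1\times m}:rw\in N\text{ for some }0\neq r\in\mathbf R\}$, so that $C_1:=\mathbf R^{1\times m}/N_1$ is torsion-free of rank $m-l$. If I can show that $N_1$ is free of rank $l$ and a direct summand, then a basis of $N_1$ furnishes a matrix $F_1\in\mathbf R^{l\times m}$ whose rows span $N_1$; because $N_1$ is a direct summand, $F_1$ completes to a unimodular matrix and is therefore ZLP. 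Since $N\subseteq N_1$ with both free of rank $l$, writing each row of $F$ in the chosen basis yields $F=G\cdot F_1$ with $G\in\mathbf R^{l\times l}$. Finally Cauchy--Binet gives $\alpha_I(F)=\det(G)\cdot(F_1)_I$ for every $l$-subset $I$ of columns; comparing with $\alpha_I=d_l(F)\,\beta_I$ and using $\gcd_I (F_1)_I=1$ shows $\det G=d_l(F)$ and $(F_1)_I=\beta_I$, whence the minors of $F_1$ generate $J_l(F)=\mathbf R$, reconfirming that $F_1$ is ZLP.

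The heart of the argument is thus to prove that $C_1$ is projective, which I would do by checking local freeness at every maximal ideal $\mathfrak p$ of $\mathbf R$. Saturation commutes with localization, so $(C_1)_{\mathfrak p}=\mathbf R_{\mathfrak p}^{1\times m}/(N_1)_{\mathfrak p}$ is torsion-free over the regular local ring $\mathbf R_{\mathfrak p}$. Because $J_l(F)=\mathbf R$, the reduced minors $\beta_I$ generate the unit ideal, so some $\beta_{I_0}$ is a unit in $\mathbf R_{\mathfrak p}$; writing $d=d_l(F)$ and letting $M_{I_0}$ be the $l\times l$ submatrix of $F$ on the columns $I_0$, this means $\det M_{I_0}=\alpha_{I_0}=\beta_{I_0}\,d$ is $d$ times a unit. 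The key computation is that every entry of $\operatorname{adj}(M_{I_0})\cdot F$ is, by cofactor expansion, $\pm$ an $l\times l$ minor of $F$ and hence divisible by $d$; consequently $F_1:=M_{I_0}^{-1}F=\big(\det M_{I_0}\big)^{-1}\operatorname{adj}(M_{I_0})F$ has entries in $\mathbf R_{\mathfrak p}$. Thus over $\mathbf R_{\mathfrak p}$ one has $F=M_{I_0}F_1$ with $M_{I_0}$ invertible, so the saturated row module $(N_1)_{\mathfrak p}$ equals the row module of $F_1$, which (having a unit $I_0$-minor) is a free direct summand of $\mathbf R_{\mathfrak p}^{1\times m}$. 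Hence $(C_1)_{\mathfrak p}$ is free.

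Local freeness at every maximal ideal, together with finite generation, makes $C_1$ projective; the Quillen--Suslin theorem then upgrades it to free. The short exact sequence $0\to N_1\to\mathbf R^{1\times m}\to C_1\to 0$ therefore splits, so $N_1$ is a direct summand and is itself projective of rank $l$, hence free of rank $l$ by Quillen--Suslin again. This is exactly what the first paragraph needed, and the construction of $G$ and $F_1$ there completes the proof. I expect the main obstacle to be precisely this globalization: the local factorizations use different column sets $I_0$ at different primes, so no single $I_0$ works globally, and passing from ``locally free'' to an honest global free direct summand is where the Quillen--Suslin theorem is indispensable; the supporting technical point is the adjugate divisibility identity that makes $M_{I_0}^{-1}F$ integral locally.
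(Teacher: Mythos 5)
The paper does not actually prove this lemma: it is imported as a black box from \cite{Liu2014}, so there is no internal argument to compare you against, only the standard proofs of this Lin--Bose-type factorization in the cited literature and the style used elsewhere in this paper. Judged on its own terms, your proof is correct, and it takes a recognizably different route from the completion-style arguments that this paper favors (e.g.\ Lemma 3.2 plus Quillen--Suslin completion of a ZLP kernel, as deployed in Theorems 4.4 and 4.5). You instead package everything module-theoretically: saturate the row module $N$ to $N_1$, prove $C_1=\mathbf{R}^{1\times m}/N_1$ is locally free via the adjugate identity $\mathrm{adj}(M_{I_0})\cdot F=\det(M_{I_0})\,F_1$ together with the divisibility of all $l\times l$ minors of $F$ by $d_l(F)$, and then use ``locally free $+$ finitely generated $\Rightarrow$ projective'' and Quillen--Suslin (twice) to realize $N_1$ as a free direct summand. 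The adjugate-divisibility computation is exactly the engine driving the literature's proofs as well, so the two routes share their kernel; what yours buys is a cleaner conceptual statement (the whole lemma reduces to ``the saturated row module is a free direct summand''), at the cost of more commutative algebra than the matrix-completion presentations.

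Two points deserve tightening, though neither is a genuine gap. First, when $d=d_l(F)\in\mathfrak{p}$, the matrix $M_{I_0}$ is \emph{not} invertible over $\mathbf{R}_{\mathfrak{p}}$ (its determinant is $\beta_{I_0}d$ with $\beta_{I_0}$ a unit but $d\in\mathfrak{p}$), only nonsingular over the fraction field; so the phrase ``$F=M_{I_0}F_1$ with $M_{I_0}$ invertible'' should be replaced by what your computation actually delivers: $F_1=\beta_{I_0}^{-1}\bigl(\mathrm{adj}(M_{I_0})F/d\bigr)$ has entries in $\mathbf{R}_{\mathfrak{p}}$, the identity $(F_1)_{I_0}=E_l$ makes the row module of $F_1$ a saturated direct summand, and $\mathrm{adj}(M_{I_0})F=\det(M_{I_0})F_1$ shows that this row module is torsion-over $N_{\mathfrak{p}}$, whence it equals $(N_1)_{\mathfrak{p}}$. (Note also that the entries of $\mathrm{adj}(M_{I_0})F$ in columns belonging to $I_0$ are $0$ or $\det M_{I_0}$ rather than minors on new column sets, but these are still divisible by $d$, so the divisibility claim stands.) Second, the Cauchy--Binet comparison $d\,\beta_I=\det G\cdot\det\bigl((F_1)_I\bigr)$ pins down $\det G$ only up to a unit of $\mathbf{K}$ --- as it must, since $d_l(F)$ is itself defined up to units --- and rescaling one row of $F_1$ fixes the normalization $\det G=d_l(F)$ exactly.
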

\begin{lemma}(\cite{Wang2004}) Let $F\in \mathbf{R}^{l\times m}$  be of normal rank $r$. If $J_r(F)=\mathbf{R}$, then $F$ can be decomposed into $F=G\cdot F_1$ such that $F_1 \in \mathbf{R}^{r\times m}$ is ZLP, $G\in R^{l\times r}$ and $J_r(G)=R$.
\end{lemma}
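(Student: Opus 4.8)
The plan is to reduce the normal-rank-$r$ situation to the full-row-rank case settled in Lemma 3.12, by first stripping off the $l-r$ redundant rows of $F$ with a unimodular transformation and only afterwards extracting the content. The guiding principle is that the hypothesis $J_r(F)=\mathbf{R}$ should allow the whole determinantal content $d_r(F)$ to be absorbed into the left factor $G$, leaving a zero-prime right factor $F_1$.

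First I would invoke Lemma 3.2: since $J_r(F)=\mathbf{R}$, there is a ZLP matrix $H\in\mathbf{R}^{(l-r)\times l}$ with $H\cdot F=0$. Because $H$ is ZLP, the Quillen--Suslin Theorem completes it to a unimodular matrix $U\in\mathbf{R}^{l\times l}$ whose bottom $l-r$ rows are $H$ and whose top block is some $P\in\mathbf{R}^{r\times l}$. Then $U\cdot F$ has its last $l-r$ rows equal to $H\cdot F=0$, so it consists of $F_0:=P\cdot F\in\mathbf{R}^{r\times m}$ on top and a zero block below; partitioning $U^{-1}=[\,G_1\ \ G_2\,]$ with $G_1\in\mathbf{R}^{l\times r}$ then gives the factorization $F=G_1\cdot F_0$. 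Since $U$ is unimodular, $F_0$ inherits the full row rank $r$ of $F$, and reading the identity $P\cdot G_1=E_r$ off $U\,U^{-1}=E_l$ shows (via the Cauchy--Binet formula applied to $P\cdot G_1$) that the $r\times r$ minors of $G_1$ generate $\mathbf{R}$, i.e. $d_r(G_1)=1$.

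The technical heart is a single-term Cauchy--Binet identity. Because the inner dimension of $F=G_1\cdot F_0$ equals $r$ exactly, every $r\times r$ minor of $F$ factors as one $r\times r$ minor of $G_1$ times one $r\times r$ minor of $F_0$. From this I read off $d_r(F)=d_r(G_1)\,d_r(F_0)=d_r(F_0)$ and $\mathbf{R}=J_r(F)=J_r(G_1)\cdot J_r(F_0)$, and since a product of ideals equals $\mathbf{R}$ only when each factor does, $J_r(F_0)=\mathbf{R}$. Now $F_0$ is of full row rank with $J_r(F_0)=\mathbf{R}$, so Lemma 3.12 applies and yields $F_0=G_0\cdot F_1$ with $G_0\in\mathbf{R}^{r\times r}$, $\det G_0=d_r(F_0)=d_r(F)$, and $F_1\in\mathbf{R}^{r\times m}$ ZLP. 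Setting $G:=G_1\cdot G_0\in\mathbf{R}^{l\times r}$ I obtain $F=G\cdot F_1$ with $F_1$ ZLP, which is the desired decomposition.

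It remains to confirm $J_r(G)=\mathbf{R}$, and I would do this by the same minor bookkeeping applied to $G=G_1\cdot G_0$: here the inner factor $G_0$ is $r\times r$, so each $r\times r$ minor of $G$ equals an $r\times r$ minor of $G_1$ times $\det G_0$. Hence $d_r(G)=\det G_0=d_r(F)$, the reduced $r\times r$ minors of $G$ coincide with the $r\times r$ minors of $G_1$, and these generate $\mathbf{R}$; therefore $J_r(G)=\mathbf{R}$. I expect the main obstacle to be precisely this tracking of contents and reduced-minor ideals across the two successive factorizations, i.e. justifying the single-term Cauchy--Binet factorization that cleanly separates the content (pushed into $G$) from the zero-prime factor $F_1$; a secondary point to check is that the matrix $H$ supplied by Lemma 3.2 and its Quillen--Suslin completion can be chosen with entries in $\mathbf{R}=\mathbf{K}[\boldsymbol x]$, so that every factor produced is genuinely a polynomial matrix.
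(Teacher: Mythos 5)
Your proof is correct; note, however, that the paper does not prove this lemma at all --- it is quoted from \cite{Wang2004} as a known result --- so there is no internal proof to compare against, and what you have reconstructed is essentially the standard argument from that literature. Your chain of reasoning is sound: Lemma 3.2 supplies a ZLP left syzygy $H$ of $F$; Quillen--Suslin completes it to a unimodular $U=\left(\begin{smallmatrix}P\\H\end{smallmatrix}\right)$, whence $F=G_1F_0$ with $G_1$ the first $r$ columns of $U^{-1}$ and $F_0=PF$ of full row rank $r$; the identity $PG_1=E_r$ plus Cauchy--Binet makes the $r\times r$ minors of $G_1$ generate $\mathbf{R}$, hence $d_r(G_1)=1$; the single-term Cauchy--Binet expansion is valid exactly because the inner dimension is $r$, and since $\mathbf{R}$ is a UFD it gives $d_r(F)=d_r(G_1)\,d_r(F_0)$ and exhibits $J_r(F)$ as the product ideal $J_r(G_1)\,J_r(F_0)$, forcing $J_r(F_0)=\mathbf{R}$; Lemma 3.12 then yields $F_0=G_0F_1$ with $F_1$ ZLP and $\det G_0=d_r(F_0)$, and your final bookkeeping for $G=G_1G_0$ correctly gives $d_r(G)=d_r(F)$ with reduced minors equal to the minors of $G_1$, so $J_r(G)=\mathbf{R}$. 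The one caveat you flag is real but harmless: Lemma 3.2 as printed places $H$ in $\overline{\mathbf{K}}^{(l-r)\times l}$, which read literally would push the whole construction into $\overline{\mathbf{K}}[\boldsymbol x]$ and break the conclusion over $\mathbf{R}$; the intended (and true) statement produces $H$ over $\mathbf{K}[\boldsymbol x]$ --- when $J_r(F)=\mathbf{R}$ the left syzygy module of $F$ is free over $\mathbf{R}$ and Quillen--Suslin holds over any coefficient field --- so every factor in your decomposition is genuinely a polynomial matrix over $\mathbf{R}$, and the proof goes through as written.
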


We consider the following map:

\begin{center}
    
$\phi: K[x_1,\cdots,x_n]\mapsto  K[x_1,\cdots,x_n]; $\\
      $~~x_1 \mapsto x_1-f_1(x_2,\cdots,x_n),$\\
          $x_2 \mapsto x_2-f_2(x_3,\cdots,x_n),$\\
          $ x_3 \mapsto  x_3,$\\
   $\cdots$ \\
   $ x_n \mapsto  x_n.$
   \end{center}
\begin{lemma} The homorphism $\phi: K[x_1,\cdots,x_n]\mapsto  K[x_1,\cdots,x_n] $ is automorphism.
\end{lemma}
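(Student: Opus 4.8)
The plan is to prove $\phi$ is an automorphism by exhibiting an explicit two-sided inverse. Since a $K$-algebra endomorphism of $K[x_1,\dots,x_n]$ is completely determined by the images of the generators $x_1,\dots,x_n$, it suffices to construct a $K$-algebra homomorphism $\psi$ whose composites with $\phi$ agree with the identity on each generator; this forces $\psi\circ\phi=\mathrm{id}$ and $\phi\circ\psi=\mathrm{id}$, and hence that $\phi$ is bijective.

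First I would guess the inverse by ``solving'' the defining relations from the bottom variable upward. The variables $x_3,\dots,x_n$ are fixed by $\phi$, so $\psi$ must fix them as well. Reading $\phi(x_2)=x_2-f_2(x_3,\dots,x_n)$ shows that $\psi(x_2)=x_2+f_2(x_3,\dots,x_n)$, and substituting this back into $\phi(x_1)=x_1-f_1(x_2,\dots,x_n)$ yields $\psi(x_1)=x_1+f_1\bigl(x_2+f_2(x_3,\dots,x_n),x_3,\dots,x_n\bigr)$. I would then \emph{define} $\psi$ by these assignments and verify directly that $\psi(\phi(x_i))=x_i$ and $\phi(\psi(x_i))=x_i$ for every $i$.

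An equivalent and perhaps cleaner route is to factor $\phi$ as a composite of two elementary (triangular) substitutions: let $\phi_2$ send $x_2\mapsto x_2-f_2(x_3,\dots,x_n)$ and fix the remaining variables, and let $\phi_1$ send $x_1\mapsto x_1-f_1(x_2,\dots,x_n)$ and fix the remaining variables. Each $\phi_j$ is a $K$-algebra endomorphism admitting the obvious inverse obtained by replacing the minus sign with a plus sign, so each is an automorphism; a short check on generators shows $\phi=\phi_1\circ\phi_2$, whence $\phi$ is an automorphism as a composite of automorphisms.

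There is no serious obstacle here; the only point requiring care is the first coordinate, where the inverse involves the nested substitution $f_1\bigl(x_2+f_2(x_3,\dots,x_n),x_3,\dots,x_n\bigr)$ rather than a naive sign flip. I would therefore carry out the generator-level verification of $\phi\circ\psi$ and $\psi\circ\phi$ explicitly for $x_1$ and $x_2$ (the cases $i\ge 3$ being immediate), since this is exactly where the order of the two shears matters and where a careless computation would produce the wrong composite.
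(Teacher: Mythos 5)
Your proof is correct, and it takes a genuinely different route from the paper's. The paper argues injectivity and surjectivity separately: for injectivity it fixes the lexicographic order with $x_1>x_2>\cdots>x_n$ and observes that, since $f_1\in K[x_2,\dots,x_n]$ and $f_2\in K[x_3,\dots,x_n]$, the images $x_1-f_1$ and $x_2-f_2$ have leading terms $x_1$ and $x_2$, so $\phi$ preserves leading monomials and $\phi(f)\neq 0$ for $f\neq 0$; for surjectivity it runs exactly the bottom-up generator chase you used to \emph{guess} $\psi$, namely $x_3,\dots,x_n\in \mathrm{Im}(\phi)$, hence $f_2\in \mathrm{Im}(\phi)$, hence $x_2=(x_2-f_2)+f_2\in \mathrm{Im}(\phi)$, and similarly $x_1$. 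Your argument instead exhibits a two-sided inverse (equivalently, the factorization $\phi=\phi_1\circ\phi_2$ into two triangular shears, each invertible by a sign flip), which gets injectivity for free and avoids the monomial-order bookkeeping that the paper states rather tersely. It also has a concrete payoff the paper's proof lacks: Theorem 4.3 actually applies $\phi^{-1}$ entrywise to a matrix, and your closed forms $\psi(x_2)=x_2+f_2(x_3,\dots,x_n)$ and $\psi(x_1)=x_1+f_1\bigl(x_2+f_2(x_3,\dots,x_n),x_3,\dots,x_n\bigr)$ make that step effective rather than merely existential. Your flagged subtlety is the right one: $\phi=\phi_1\circ\phi_2$ but in general $\phi\neq\phi_2\circ\phi_1$ when $f_1$ involves $x_2$, which is precisely why $\psi(x_1)$ carries the nested substitution instead of a naive sign change.
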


\begin{proof}
since $\phi$ is an homomorphism;suppose $f=ax_1^{r_1}x_2^{r_2}\ldots x_n^{r_n}+\cdots$. We define the lexicographical order on $K[x_1,\cdots,x_n] $, 
and $\phi(f)=a(x_1-f_1)^{r_1}(x_2-f_2)^{r_2}\ldots x_n^{r_n}+\cdots$. Therefor $\phi(f)\neq 0$, and $\phi$ is an  injective homomorphism;
 then we prove  $\phi$ is an surjective homomorphism: $x_3,\ldots,x_n \in Im(\phi)$, since $ f_2\in Im(\phi)$, and $x_2-f_2\in Im(\phi)$;
$ x_2\in Im(\phi)$. similarly, $ x_1\in Im(\phi)$. thus The homorphism $\phi: K[x_1,\cdots,x_n]\mapsto  K[x_1,\cdots,x_n] $ is automorphism.
the proof is complete.
\end{proof}

\section{Results }

\begin{theorem} Let $F\in \mathbf{R}^{3\times 3}$, $d(F)\triangleq d_l(F)=x_1^{p}x_2^{q}$, where $p,q$ is positive integer. If $J_i(F)=\mathbf{R}$, for $i=1,2,3$, then $F$ is equivalent to its Smith normal form $D$, \\
 
 $$D=\left(
   \begin{array}{ccc}
           x_1^{r_1}x_2^{s_1}& &\\
            & x_1^{r_2}x_2^{s_2}& \\
                      & &  x_1^{r_3}x_2^{s_3}\\
            \end{array}
 \right),$$
where$r_1\leq r_2\leq r_3,s_1\leq s_2\leq s_3.$
\end{theorem}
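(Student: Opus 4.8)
The plan is to reduce $F$ to the stated diagonal matrix $D$ by a peel-and-absorb induction on the total exponent $p+q$ of $\det F$, peeling one layer with the factorization Lemma~3.5 and reassembling with the absorption Lemmas~3.10 and~3.11. First I would normalize. Since $d_1(F)\mid d(F)=x_1^{p}x_2^{q}$ and, by the running assumption recorded just before Lemma~3.10, $x_1\nmid d_1(F)$ and $x_2\nmid d_1(F)$, we get $d_1(F)=1$, so $r_1=s_1=0$; the general $r_1,s_1$ in the statement are recovered at the end by multiplying back the common factor $x_1^{r_1}x_2^{s_1}$ of the entries, which scales every diagonal entry of the Smith form equally. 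Writing $d_2(F)=x_1^{a}x_2^{b}$ and $d_3(F)=x_1^{p}x_2^{q}$, the target is forced to be $\mathrm{diag}(1,\,x_1^{a}x_2^{b},\,x_1^{p-a}x_2^{q-b})$, and the monotonicity $r_1\le r_2\le r_3$, $s_1\le s_2\le s_3$ is exactly the divisibility chain $\Phi_1\mid\Phi_2\mid\Phi_3$ of Definition~2.3, so it requires no separate argument.

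The core of the proof is a case analysis according to how $x_1$ and $x_2$ first enter the determinantal divisors, i.e.\ according to which of the exponents $a,b$ of $d_2(F)$ are positive. Because $l=3$, only the two transitions $d_1\to d_2$ and $d_2\to d_3$ occur, so the case list is short and finite. In each case I apply Lemma~3.5 to the variable(s) entering at that transition: when a single variable $x_i$ enters it yields a unimodular $W\in\overline{\mathbf R}_i$ with $W\cdot F=\mathrm{diag}(1,\dots,1,x_i,\dots,x_i)\cdot F_1$, and when both $x_1,x_2$ enter simultaneously it yields the two-sided form $U\cdot F=\mathrm{diag}(1,\dots,x_2)\cdot V\cdot\mathrm{diag}(1,\dots,x_1)\cdot F_1$ with $U\in\overline{\mathbf R}_2$ and $V\in\overline{\mathbf R}_1$. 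In every case $\det F_1$ has strictly smaller total exponent, so the induction hypothesis applies to $F_1$; here Lemma~3.3 (and, where a factor must be extracted from the right-hand factor, Lemma~3.7) is used to verify that $J_i(F_1)=\mathbf R$ and that the determinantal divisors of $F_1$ are exactly those the induction needs.

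Having peeled one layer and diagonalized $F_1$, I would be left with a product of the shape $(\text{diagonal powers})\cdot V\cdot(\text{single-variable multiplier block})$, which is precisely the hypothesis form of the absorption Lemmas~3.10 (for $x_1$, with $V\in\overline{\mathbf R}_1$) and~3.11 (for $x_2$, with the multiplier over $\overline{\mathbf R}_2$). Applying these lemmas moves each surplus $x_1$ or $x_2$ across the subring-unimodular factor and into the trailing diagonal block; after finitely many such absorptions the product collapses to $D$. It is worth noting that the deliberate arrangement in Lemma~3.5 — $x_2$-block on the left, $x_1$-block on the right, with $V$ between them — is what lets the $x_1$-absorption and the $x_2$-absorption be carried out without interfering.

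The main obstacle I expect is not a single hard computation but the propagation of hypotheses through the induction: at each absorption step one must check that $J_i=\mathbf R$ still holds and that the accumulated exponents remain in the monotone order $r_1\le r_2\le r_3$, $s_1\le s_2\le s_3$ demanded by Lemmas~3.10 and~3.11, and one must coordinate the two one-sided absorptions so that pushing in the powers of $x_1$ does not disturb the $x_2$-structure and conversely. Managing this bookkeeping cleanly across the short case list is the delicate part of the argument.
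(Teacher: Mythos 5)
Your proposal takes essentially the same route as the paper's proof: factor out $d_1(F)$ as a scalar, peel one variable layer at a time with Lemma~3.5 (one-sided or two-sided according to whether one or both of $x_1,x_2$ enter at the given determinantal-divisor transition), absorb the peeled diagonal factors into the trailing block via the absorption lemmas (3.9--3.11), and propagate $J_i(\cdot)=\mathbf{R}$ and the divisor data through each factorization with Lemmas~3.3 and~3.8 --- the paper carries out your induction on $p+q$ as ``iterating the same procedure,'' with your two cases corresponding to its split on $t=\min\{k: x_1x_2\mid d_k(F)\}$ and $m_1=\min(r_2-r_1,s_2-s_1)$. The only slips are cosmetic: the hypothesis-propagation lemma you cite as 3.7 is actually Lemma~3.8 (3.7 is the single-variable Smith-form result), and the absorption step in the late-entry case additionally uses Lemma~3.9.
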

\begin{proof}
For $d(F)=x_1^{p}x_2^{q}$, since $d_i(F)\mid d(F)$, we may assume that $d_i(F)=x_1^{p_i}x_2^{q_i}$, where $p_i,q_i$ is a non-negative integer, $i=1,2,3$. Set $p_0=q_0\equiv 0$, $r_i=p_i-p_{i-1},s_i=q_i-q_{i-1}$, $i=1,2,3$. And there exist $k= 1,2,3$ such that $x_1x_2\mid d_{k}(F)$, let $t= min (k)$;
($t=0$ is same as lemma 3.8).

 (1) if $t\leq2$

 When $t=1$, $d_1(F)=x_1^{p_1}x_2^{q_1}=x_1^{r_1}x_2^{s_1}$, then we have $F=x_1^{r_1}x_2^{s_1}\cdot N_1$.
 
 When $t=2$, $d_1(F)=x_1^{p_1}=x_1^{r_1}$ or $d_1(F)=x_2^{q_1}=x_2^{s_1}$, then we have $F=x_1^{r_1}x_2^{s_1}\cdot N_1$, where $r_1=0$ or $s_1=0$.

  It is obvious that $p_1=r_1, q_1=s_1,p_2\geq 2r_1,q_2\geq 2s_1$, $r_2=p_2-p_1\geq r_1,s_2=q_2-q_1\geq s_1$. Setting $m_1= min(r_2-r_1,s_2-s_1)$,

 If $m_1=0$, we assumed $r_2=r_1,s_2>s_1$, since $J_i(F)=\mathbf{R}$, by Lemma 3.1,$J_1(N_1)=\mathbf{R}$, and $x_2\nmid d_1(N_1),\ x_2\mid d_2(N_1)$, by Lemma 3.5, there exist  unimodular matrix $U_1\in \overline{\mathbf{R}}_2$ such that
 $$
 U_1 N_1=\left(
   \begin{array}{ccc}
        1 &  &\\
          & x_2 &\\
          & &  x_2\\
          \end{array}
 \right)\cdot Q^{'},$$
 i.e.
 $$F \sim B = \left(
   \begin{array}{cc}
        x_1^{r_1}x_2^{s_1} &  \\
          & x_1^{r_1}x_2^{s_1+1}E_2 \\
          \end{array}
 \right)\cdot Q_1.$$

 If $m_1>0$, so $r_2>r_1$, and $s_2>s_1$, since $J_i(F)=\mathbf{R}$, by Lemma 3.1, $J_1(N_1)=\mathbf{R}$, and $x_i\nmid d_1(N_1), x_i\mid d_2(N_1)$, by Lemma 3.5, there exist  unimodular matrix $U_1\in \overline{\mathbf{R}}_2,V_1\in \overline{\mathbf{R}}_1$ such that
 $$
 U_1 N_1=\left(
   \begin{array}{ccc}
        1 &  &\\
          & x_2 &\\
          & &  x_2\\
          \end{array}
 \right)\cdot V_1\cdot \left(
   \begin{array}{ccc}
         1 &  &\\
          & x_1 &\\
          & &  x_1\\
          \end{array}
 \right)\cdot Q^{'},$$
by lemma 3.10,
$$N_1\sim \left(
   \begin{array}{ccc}
        1 &  &\\
          & x_1x_2  & \\
          & &  x_1x_2 \\
          \end{array}
 \right)\cdot Q_1,$$
i.e.
 $$F\sim B = \left(
   \begin{array}{ccc}
        x_1^{r_1}x_2^{s_1} &  &\\
          & x_1^{r_1+1}x_2^{s_1+1} &\\
          &  &  x_1^{r_1+1}x_2^{s_1+1}
          \end{array}
 \right)\cdot Q_1.$$

 By Lemma 3.3, $d_1(B)=x_1^{r_1}x_2^{s_1}$,$J_1(B)=\mathbf{R}$, combined with Lemma 3.8, $d_1(Q_1)=1$,$J_1(Q_1)=\mathbf{R}$, $x_i\mid d_2(Q_1)$, thus iterating the same procedure,
 $$Q_1\sim \left(
   \begin{array}{ccc}
        1 &  &\\
          & x_i  & \\
          &  &  x_i\\
          \end{array}
 \right)\cdot Q_1^{'},\ i=1,2.$$

Iterating the same procedure, combined with Lemma 3.11 and 3.12, we have $F$ is equivalent to $M_2$, where
$$
M_2=\left(
   \begin{array}{ccc}
                                   x_1^{r_1}x_2^{s_1} & & \\
                                    & x_1^{r_2}x_2^{s_2}  & \\
                                    &  &  x_1^{r_2}x_2^{s_2}\\
                                 \end{array}
 \right)\cdot F_1.
$$
Iterating the preceding process, we have$F$ is equivalent to its Smith normal form $D$;\\

(2) if $t=3$
 
Assumed $x_1\nmid d_2(F),x_1\mid d_{3}(F)$, it implied that $r_1=r_2=0$, $d_1(F)=x_2^{q_1}=x_2^{s_1}$, then we have $F=x_2^{s_1}\cdot N_1$, and $J_1(N_1)=\mathbf{R}$, by Lemma 3.5, there exist  unimodular matrix $U_1\in \overline{\mathbf{R}}_2$, such that
 $$
 D=U_1 N_1=\left(
   \begin{array}{ccc}
         1 & & \\
        & x_2 &\\
         & &  x_2 \\
          \end{array}
 \right)\cdot  Q_1,$$
 and $x_2\nmid d_1(Q_1)$,$x_2^{s_2-s_1-1}\mid d_2(Q_1)$,$J_1(Q_1)=\mathbf{R}$, by Lemma 3.5, there exist  unimodular matrix$U_2 \in \overline{\mathbf{R}}_2$ such that
 $$
 U_2 Q_1=\left(
   \begin{array}{ccc}
        1 & & \\
        & x_2 &\\
         & &  x_2 \\
          \end{array}
 \right)\cdot  Q_2,$$

so by Lemma 3.12,
$$N_1\sim \left(
   \begin{array}{ccc}
    1 & & \\
        & x_2^2 &\\
         & &  x_2^2 \\
                  \end{array}
 \right)\cdot G_1.$$

Then repeating the process ,we have
$$F\sim F_1=\left(
   \begin{array}{ccc}
    x_2^{s_1} & & \\
       & x_2^{s_2}& \\
        & &  x_2^{s_2} \\
          \end{array}
 \right)\cdot N_2,$$

Since $x_1\nmid d_2(F), x_1\mid d_3(F)$ and $F\sim F_1$,$J_{2}(F)=\mathbf{R}$, by Lemma 3.8 then we claim that, $x_1\nmid d_{2}(N_2),x_1\mid d_3(N_2)$, $J_{2}(N_2)=\mathbf{R}$.
By Lemma 3.5, there exist  unimodular matrix $V \in \overline{\mathbf{R}}_1$ such that
 $$
 V N_2=\left(
   \begin{array}{ccc}
        1 & &  \\
        & 1 &\\
         &  & x_1 \\
          \end{array}
 \right)\cdot  G_1,$$
so
$$ F_1= B\cdot G_1=\left(
   \begin{array}{ccc}
    x_2^{s_1} & & \\
       &  x_2^{s_{2}}& \\
       &  &  x_2^{s_{2}} \\
          \end{array}
 \right)\cdot V^{-1} \cdot \left(
   \begin{array}{ccc}
        1 & &  \\
        & 1 &\\
         &  & x_1 \\
          \end{array}
 \right)\cdot  G_1,$$
it is obviously that $J_i(B)=\mathbf{R}$, $d_i(B)=d_{i}(F_1)(i=1,2)$. Hence, by Lemma 3.13

 $$B \sim \left(
    \begin{array}{ccc}
    x_2^{s_1} & & \\
       & x_2^{s_{2}}& \\
         & & x_1x_2^{s_2} \\
          \end{array}
 \right).$$
 Thus
 $$ F_1 \sim \left(
    \begin{array}{ccc}
    x_2^{s_1} & & \\
       & x_2^{s_2}& \\
        & &  x_1x_2^{s_2} \\
          \end{array}
 \right)\cdot  G,$$

 where $J_i(G)=\mathbf{R}$, $d_i(G)=1(i=1,2)$, $x_i\nmid d_2(G), x_i\mid d_3(G), i=1,2$; by lemma 3.5
$$ F_1= \left(
   \begin{array}{ccc}
        x_2^{s_1} & & \\
          & x_2^{s_2}& \\
          & & x_1x_2^{s_2} \\
          \end{array}
 \right)\cdot U \cdot\left(
  \begin{array}{ccc}
        1 & & \\
        & 1 &\\
       &   &  x_i \\
          \end{array}
 \right)\cdot N^{'}
$$
iterating the same procedure successively, finally$$
 F\sim D=\left(
   \begin{array}{ccc}
        x_2^{s_1} & &\\
          & x_2^{s_2} & \\
           & & x_1^{r_3}x_2^{s_3}\\
          \end{array}
 \right).
 $$\\
Assumed $x_2\nmid d_2(F), x_2\mid d_{3}(F)$, it implied that $s_1=s_2=0$, $d_1(F)=d_1^{p_1}=d_1^{r_1}$, then we have $F=x_1^{r_1}\cdot N_1$, and $J_i(N_1)=\mathbf{R}$, we repeating the proof process, so finally$$
 F\sim D =\left(
   \begin{array}{ccc}
        x_1^{r_1} & &\\
          & x_1^{r_2}& \\
           &  & x_1^{r_3}x_2^{s_3}\\
          \end{array}
 \right).
 $$
\\
Hence $D$ is the Smith normal form of $F$.
\end{proof}

\begin{theorem} Let $F\in \mathbf{R}^{l\times l}$, $d(F)\triangleq d_l(F)=x_1^{p}x_2^{q}$, where $p,q$ is positive integer. If $J_i(F)=\mathbf{R}$, for $i=1,2,\cdots ,l$, then $F$ is equivalent to its Smith normal form $D$, \\
 $$ D=\left(
   \begin{array}{cccc}
           x_1^{r_1}x_2^{s_1}& & &\\
            & x_1^{r_2}x_2^{s_2}& &\\
            & & \ddots&\\
            & & & x_1^{r_l}x_2^{s_l}\\
            \end{array}
 \right),$$
where$r_1\leq r_2\leq\cdots\leq r_l,s_1\leq s_2\leq\cdots\leq s_l.$
\end{theorem}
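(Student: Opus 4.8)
The plan is to prove Theorem 4.2 by induction on the matrix size $l$, upgrading the $3\times 3$ computation of Theorem 4.1 to arbitrary $l$; the cases $l=1,2$ are immediate (or the obvious sub-cases of the $l=3$ argument), so Theorem 4.1 serves as the base step. Throughout I write $d_i(F)=x_1^{p_i}x_2^{q_i}$, which is forced by $d_i(F)\mid d_l(F)=x_1^px_2^q$, set $p_0=q_0=0$ and put $r_i=p_i-p_{i-1}$, $s_i=q_i-q_{i-1}$. Using the normalization recorded in the remark preceding Lemma 3.10, I first divide out the largest scalar power $x_1^{a}x_2^{b}$ dividing every entry, so that $d_1(F)=1$; this preserves equivalence to the Smith form and merely shifts all exponents uniformly. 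The goal is then to finalize the leading invariant factor $x_1^{r_1}x_2^{s_1}$, split it off, and feed the residual $(l-1)\times(l-1)$ problem back into the induction.

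The engine of the reduction is the top-left peeling used for $l=3$. Let $t$ be the smallest index with $x_1x_2\mid d_t(F)$; below $t$ the determinantal divisors are pure powers of a single variable and the single-variable alternative of Lemma 3.5 applies, while at index $t$ both variables enter at once and the two-sided alternative of Lemma 3.5 yields $F\sim \Lambda_2\,V\,\Lambda_1\,F_1$ with $\Lambda_i=\mathrm{diag}(1,\dots,1,x_i,\dots,x_i)$ and $V$ unimodular over $\overline{\mathbf R}_1$. I then consolidate these extracted factors into honest diagonal entries $x_1^{r_i}x_2^{s_i}$ by repeatedly applying the ``pushing-up'' Lemmas 3.10 and 3.11, which raise the $x_1$- respectively $x_2$-exponent of the trailing block one unit at a time, and the cleanup Lemma 3.9, which rewrites a product $\mathrm{diag}\cdot V\cdot(E_k,x_1E)$-type expression as a genuine diagonal matrix times a cofactor. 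After this stage $F$ is equivalent to $\mathrm{diag}(x_1^{r_1}x_2^{s_1})\oplus F'$ with the leading invariant factor in final form and $F'\in\mathbf R^{(l-1)\times(l-1)}$.

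To close the induction I must verify that $F'$ again satisfies the hypotheses: its determinant is the monomial $d_l(F)/(x_1^{r_1}x_2^{s_1})$ and $J_j(F')=\mathbf R$ for every $j$. Both follow from the structural Lemmas 3.1 and 3.3 together with Lemma 3.8, which guarantee that factoring off a diagonal matrix whose $k$-th determinantal divisor already equals that of $F$ leaves $d_k$ unchanged and the reduced-minor ideals full; the clean splitting of the leading invariant factor uses $J_1(F)=\mathbf R$ (so the entries generate the unit ideal) via a Quillen--Suslin completion, exactly as in the proof of Lemma 3.5 and Lemma 3.12. Applying the inductive hypothesis to $F'$ produces its Smith form, and reassembling gives the stated $D$; the sequences $p_i,q_i$ built this way are convex, which furnishes the required monotonicity $r_1\le\cdots\le r_l$, $s_1\le\cdots\le s_l$ and hence the divisibility chain $\Phi_1\mid\cdots\mid\Phi_l$. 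The degenerate case $t=l$, where only one variable ever appears, is disposed of directly by Lemma 3.8, or equivalently by the single-variable Lemma 3.7 applied to that variable.

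The main obstacle is the \emph{simultaneous} bookkeeping of the two exponent sequences. In the one-variable situation (Lemma 3.7) a single divisibility chain suffices, but here $p_i$ and $q_i$ need not increase in step, so one must interleave the $x_1$-pushing (Lemma 3.10) and the $x_2$-pushing (Lemma 3.11) in the correct order and, crucially, verify after \emph{each} elementary step that the hypotheses $J_j(\cdot)=\mathbf R$ persist and that the partial diagonal still respects $x_1^{r_i}x_2^{s_i}\mid x_1^{r_{i+1}}x_2^{s_{i+1}}$. Guaranteeing that the residual factor re-enters the induction with all hypotheses intact---precisely the content of Lemma 3.8 combined with Lemmas 3.1 and 3.3---is where essentially all the work lies; once that is secured, the passage from $l=3$ to general $l$ is a faithful iteration of the $3\times 3$ computation.
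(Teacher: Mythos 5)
Your proposal takes essentially the same route as the paper's own proof: factor out the scalar content, locate the minimal index $t$ with $x_1x_2\mid d_t(F)$, extract diagonal factors via Lemma 3.5, consolidate and raise exponents with Lemmas 3.9--3.11, and keep the hypotheses $J_i(\cdot)=\mathbf{R}$ intact on the cofactor via Lemmas 3.1, 3.3 and 3.8. The only difference is cosmetic: the paper runs this as an in-place iteration on the full $l\times l$ cofactor (mirroring the $3\times 3$ case of Theorem 4.1, with the two cases $t\leq 2$ and $t>2$), whereas you formalize it as induction on $l$ with a block split $x_1^{r_1}x_2^{s_1}\oplus F'$ --- a split that is exactly what Lemma 3.9(2) supplies, so your plan is sound.
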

\begin{proof}
For $d(F)=x_1^{p}x_2^{q}$, since $d_i(F)\mid d(F)$, we may assume that $d_i(F)=x_1^{p_i}x_2^{q_i}$, where $p_i,q_i$ is a non-negative integer, $i=1,2,\cdots,l$. Set $p_0=q_0\equiv 0$, $r_i=p_i-p_{i-1},s_i=q_i-q_{i-1}$, $i=1,2,\cdots,l$. there exist $k\in (1,\cdots,l)$ such that $x_1x_2\mid d_{k}(F)$, let $t= min (~k~)$;\\
($t=0$ is same as Lemma 3.8).
 
 (1) if $t\leq2$
 
 When $t=1$, $d_1(F)=x_1^{p_1}x_2^{q_1}=x_1^{r_1}x_2^{s_1}$, then we have $F=x_1^{r_1}x_2^{s_1}\cdot N_1$.

 When $t=2$, $d_1(F)=x_1^{p_1}=x_1^{r_1}$or$d_1(F)=x_2^{q_1}=x_2^{s_1}$, then we have $F=x_1^{r_1}x_2^{s_1}\cdot N_1$, where $r_1=0$ or $s_1=0$.

 It is obvious that $p_1=r_1, q_1=s_1,p_2\geq 2r_1,q_2\geq 2s_1, r_2=p_2-p_1\geq r_1,s_2=q_2-q_1\geq s_1$. Setting $m_1= ~Min(r_2-r_1,s_2-s_1)$.

 If $m_1=0$, there exist $j$ such that $r_j>r_{j-1}$ or $s_j>s_{j-1}$. We assumed $r_2=r_1,s_2>s_1$, since $J_i(F)=\mathbf{R}$, by Lemma 3.1, $J_1(N_1)=\mathbf{R}$, and $x_2\nmid d_1(N_1),x_2\mid d_2(N_1)$, by Lemma 3.5, there exist  unimodular matrix $U_1\in \overline{\mathbf{R}}_2$ such that
 $$
 U_1 N_1=\left(
   \begin{array}{cc}
        1 &  \\
          & x_2E_{l-1} \\
          \end{array}
 \right)\cdot Q^{'},$$
 i.e.
 $$F \sim B= \left(
   \begin{array}{cc}
        x_1^{r_1}x_2^{s_1} &  \\
          & x_1^{r_1}x_2^{s_1+1}E_{l-1} \\
          \end{array}
 \right)\cdot Q_1.$$

 If $m_1>0$, so $r_2>r_1$, and $s_2>s_1$, since $J_i(F)=\mathbf{R}$, by Lemma 3.1, $J_1(N_1)=\mathbf{R}$, and $x_i\nmid d_1(N_1), x_i\mid d_2(N_1)$, by Lemma 3.5, there exist  unimodular matrix $U_1\in \overline{\mathbf{R}}_2, V_1\in \overline{\mathbf{R}}_2$ such that
 $$
 U_1 N_1=\left(
   \begin{array}{cc}
        1 &  \\
          & x_2E_{l-1} \\
          \end{array}
 \right)\cdot V_1\cdot \left(
   \begin{array}{cc}
        1 &  \\
          & x_1E_{l-1} \\
          \end{array}
 \right)\cdot Q^{'},$$
by Lemma 3.10,
$$N_1\sim \left(
   \begin{array}{cc}
        1 &  \\
          & x_1x_2E_{l-1} \\
          \end{array}
 \right)\cdot Q_1,$$
i.e.
 $$F \sim B= \left(
   \begin{array}{cc}
        x_1^{r_1}x_2^{s_1} &  \\
          & x_1^{r_1+1}x_2^{s_1+1}E_{l-1} \\
          \end{array}
 \right)\cdot Q_1.$$

Iterating the preceding process, we have$F$ is equivalent to its Smith normal form $D$.\\

(2) if $t>2$

There $x_1\nmid d_{t-1}(F), x_1\mid d_{t}(F)$, or $x_2\nmid d_{t-1}(F),x_2\mid d_{t}(F)$, it implied that $r_1=\cdots=r_{t-1}=0$,$d_1(F)=x_2^{q_1}=x_2^{s_1}$, or $s_1=\cdots=s_{t-1}=0$, $d_1(F)=x_1^{p_1}=x_1^{r_1}$.

We assumed $x_1\nmid d_{t-1}(F), x_1\mid d_{t}(F)$, so $r_1=\cdots=r_{t-1}=0$, $d_1(F)=x_2^{q_1}=x_2^{s_1}$, then we have $F=x_2^{s_1}\cdot N_1$, and $J_1(N_1)=\mathbf{R}$, by Lemma 3.5, there exist  unimodular matrix $U_1\in \overline{\mathbf{R}}_2$, such that
 $$
 D=U_1 N_1=\left(
   \begin{array}{cc}
        1 &  \\
          & x_2E_{l-1} \\
          \end{array}
 \right)\cdot  Q_1.$$

Then repeating the process ,we have
$$F \sim F_1=\left(
   \begin{array}{cccc}
    x_2^{s_1} & && \\
      &  \ddots & & \\
       & &x_2^{s_{t-1}}& \\
        & & & x_2^{s_{t-1}}E_{l-t+1} \\
          \end{array}
 \right)\cdot N_2.$$

Since $x_1\nmid d_{t-1}(F), x_1\mid d_t(F)$ and $F\sim F_1$, $J_{t-1}(F)=\mathbf{R}$. By Lemma 3.8 then we claim that, $x_1\nmid d_{t-1}(N_2),x_1\mid d_t(N_2)$, $J_{t-1}(N_2)=\mathbf{R}$.
By Lemma 3.5, there exist  unimodular matrix $V \in \overline{\mathbf{R}}_1$ such that
 $$
 V N_2=\left(
   \begin{array}{cc}
        E_{t-1} &  \\
          & x_1E_{l-t+1} \\
          \end{array}
 \right)\cdot  G_1,$$
 and then
$$ F_1= B\cdot G_1=\left(
   \begin{array}{cccc}
    x_2^{s_1} & & &\\
      &  \ddots & & \\
       & &x_2^{s_{t-1}}& \\
       &  & & x_2^{s_{t-1}}E_{l-t+1} \\
          \end{array}
 \right)\cdot V^{-1} \cdot \left(
   \begin{array}{cc}
        E_{t-1} &  \\
          & x_1E_{l-t+1} \\
          \end{array}
 \right)\cdot  G_1.$$
It is obviously that $J_i(B)=\mathbf{R}$, $d_i(B)=d_{i}(F_1),\ i=1,\cdots,t-1$, hence, by Lemma 3.13

 $$B\sim \left(
    \begin{array}{cccc}
    x_2^{s_1} & & &\\
      &  \ddots & & \\
       & &x_2^{s_{t-1}}& \\
         & && x_1x_2^{s_{t-1}}E_{l-t+1} \\
          \end{array}
 \right).$$
 Thus
 $$ F_1\sim \left(
    \begin{array}{cccc}
    x_2^{s_1} & && \\
      &  \ddots & & \\
       & &x_2^{s_{t-1}}& \\
        & & & x_1x_2^{s_{t-1}}E_{l-t+1} \\
          \end{array}
 \right)\cdot  G,$$

 where $J_i(G)=\mathbf{R}$, $d_i(G)=1,  \ i=1,\cdots,t-1$, $x_i\nmid d_{t-1}(G), x_i\mid d_t(G),i=1,2$. By Lemma 3.5
$F_1=$$$\small  \left(
   \begin{array}{cccc}
        x_2^{s_1} & && \\
        &\ddots &&\\
          & &x_2^{s_{t-1}}& \\
          & &&x_1x_2^{s_{t-1}}E_{l-t+1} \\
          \end{array}
 \right)\cdot U \cdot\left(
  \begin{array}{cc}
        E_{t-1}& \\
          &x_iE_{l-t+1} \\
          \end{array}
 \right)\cdot N^{'}.
 $$
Iterating the same procedure successively, finally$$
 F\sim D =\left(
   \begin{array}{cccccc}
        x_2^{s_1} & && &&\\
        &\ddots &&&&\\
          & &x_2^{s_{t-1}}&&& \\
          & &&x_1^{r_t}x_2^{s_t}&& \\
           & & && \ddots &\\
           & &&&& x_1^{r_l}x_2^{s_l}\\
          \end{array}
 \right).
 $$

Hence $D$ is the Smith normal form of $F$.
\end{proof}

We now give our result of equivalent of quasi weakly polynomial matrices.
\begin{theorem} Let $F\in \mathbf{R}^{l\times l}$, $d(F)\triangleq d_l(F)=d_1^{p}d_2^{q}$, $d_1=x_1-f_1(x_2,\cdots,x_n),d_2=x_2-f_2(x_3,\cdots,x_n)$, where $p,q$ is positive integer. If $J_i(F)=\mathbf{R}$, for $i=1,2,\cdots ,l$, then $F$ is equivalent to its Smith normal form $D$, where \\
 $$
 D=\left(
   \begin{array}{cccc}
           d_1^{r_1}d_2^{s_1}& & &\\
            & d_1^{r_2}d_2^{s_2}& &\\
            & & \ddots&\\
            & & & d_1^{r_l}d_2^{s_l}\\
            \end{array}
 \right),$$
where$r_1\leq r_2\leq\cdots\leq r_l,s_1\leq s_2\leq\cdots\leq s_l.$
\end{theorem}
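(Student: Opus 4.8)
The plan is to reduce the statement to the monomial case $d(F)=x_1^{p}x_2^{q}$ already established in Theorem 4.2, using the $K$-algebra automorphism $\phi$ of Lemma 3.14. Recall that $\phi$ fixes $K$ and sends $x_1\mapsto d_1=x_1-f_1(x_2,\ldots,x_n)$, $x_2\mapsto d_2=x_2-f_2(x_3,\ldots,x_n)$, while fixing $x_3,\ldots,x_n$; by Lemma 3.14 it is an automorphism of $\mathbf{R}$, so $\phi^{-1}$ exists and is again a $K$-algebra automorphism.

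First I would record how $\phi$ acts on the matrix invariants. Applying $\phi$ entrywise to a matrix $A$ and using that $\phi$ is a ring homomorphism, every $i\times i$ minor of $\phi(A)$ is the $\phi$-image of the corresponding minor of $A$; in particular $\det\phi(A)=\phi(\det A)$. Since an automorphism preserves divisibility in both directions, it preserves greatest common divisors up to units, so $d_i(\phi(A))=\phi(d_i(A))$ and the reduced $i$-th minors of $\phi(A)$ (in the sense of Definition 2.1) are exactly the $\phi$-images of those of $A$. Consequently $J_i(\phi(A))=\phi\bigl(J_i(A)\bigr)$, and because $\phi$ sends the unit ideal to the unit ideal we get $J_i(\phi(A))=\mathbf{R}$ if and only if $J_i(A)=\mathbf{R}$. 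Finally, as $\phi$ fixes $K$ and carries nonzero constants to nonzero constants, it maps unimodular matrices to unimodular matrices, hence preserves the equivalence of Definition 2.4; the same holds for $\phi^{-1}$.

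Next I set $G=\phi^{-1}(F)$. Because the $d_i(F)$ form a divisor chain dividing $d(F)=d_1^{p}d_2^{q}$, and $d_1,d_2$ are coprime irreducibles, we have $d_i(F)=d_1^{p_i}d_2^{q_i}$ for nonnegative integers $p_i,q_i$. Using $\phi^{-1}(d_1)=x_1$ and $\phi^{-1}(d_2)=x_2$ together with the transport rules above gives $d_i(G)=\phi^{-1}(d_i(F))=x_1^{p_i}x_2^{q_i}$; in particular $d(G)=x_1^{p}x_2^{q}$ and $J_i(G)=\mathbf{R}$ for every $i$. Thus $G$ satisfies the hypotheses of Theorem 4.2, so there are unimodular $M,N$ with $M\,G\,N=D'$, where $D'=\mathrm{diag}(x_1^{r_1}x_2^{s_1},\ldots,x_1^{r_l}x_2^{s_l})$ and $r_1\le\cdots\le r_l$, $s_1\le\cdots\le s_l$. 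Applying $\phi$ to this identity yields $\phi(M)\,F\,\phi(N)=\phi(D')=\mathrm{diag}(d_1^{r_1}d_2^{s_1},\ldots,d_1^{r_l}d_2^{s_l})=D$, with $\phi(M),\phi(N)$ unimodular; hence $F\sim D$.

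I do not anticipate a genuine obstacle: once Lemma 3.14 and Theorem 4.2 are available, $\phi$ functions as a dictionary translating the pair $(x_1,x_2)$ into $(d_1,d_2)$ while preserving every piece of data — minors, greatest common divisors, the ideals $J_i$, and unimodularity — that enters both the hypotheses and the Smith form. The only part requiring care is the transport paragraph, and in particular the verification that reduced minors and the ideals $J_i$ behave as claimed under $\phi$; each such claim, however, is a formal consequence of $\phi$ being a $K$-algebra automorphism. It then remains only to observe that $D$, having diagonal entries $\Phi_i=d_i(F)/d_{i-1}(F)$ and inheriting the divisibility chain $r_1\le\cdots\le r_l$, $s_1\le\cdots\le s_l$ from Theorem 4.2, is indeed the Smith normal form of $F$.
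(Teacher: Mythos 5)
Your proposal is correct and takes essentially the same route as the paper: the paper's proof of this theorem likewise applies the automorphism $\phi$ of Lemma 3.14 entrywise, invokes Theorem 4.2 on $\phi^{-1}(F)$ (whose determinantal data become powers of $x_1,x_2$), and transports the resulting Smith form back via $\phi(U)\,F\,\phi(V)=\phi(D_1)=D$. Your write-up is in fact more careful than the paper's terse version, since you explicitly verify that $\phi$ preserves minors, greatest common divisors, the ideals $J_i$, and unimodularity, all of which the paper leaves implicit.
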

\begin{proof}
For matrix F defines a linear homorphism from $F^l$ to $F^l$, $\phi(F)$ is means that automorphism acts on every element of F, since $det (\phi^{-1}(F))=x_1^{r_1}x_2^{s_1}$, by theorem 4.2, there exist  unimodular matrix $U,V$ such that $U\phi^{-1}(F)V=D_1$, where $D_1$is the Smith normal form of $\phi^{-1}(F)$.

Therefor  we have $\phi(U) F \phi(V)=\phi(D_1)=D$, and $D$ is the Smith normal form of $F$.

\end{proof}

\begin{theorem} Let $F\in\mathbf{R}^{l\times m}$ ($l\leq m$) be of full row rank, $d_l(F)=d_1^{p}d_2^{q}$, where $d_1=x_1-f_1(x_2,\cdots,x_n),d_2=x_2-f_2(x_3,\cdots,x_n)$, $p,q$ is a positive integer. Then $F$ is equivalent to its  Smith normal form iff $J_k(F)=\mathbf{R}$ for $k=1,2,\cdots,l$.
\end{theorem}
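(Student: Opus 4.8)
The plan is to prove both directions of the equivalence, exploiting the automorphism $\phi$ from Lemma 3.14 to reduce the full-row-rank rectangular case to the already-established results on the square monomial-determinant case (Theorems 4.2 and 4.3). The forward direction ($F$ equivalent to Smith form $\Rightarrow J_k(F)=\mathbf{R}$) is the easier one: if $F=M\cdot S\cdot N$ with $M,N$ unimodular and $S=[D\ \ 0_{l\times(m-l)}]$ the Smith form, then the $k\times k$ minors of $F$ and of $S$ generate the same ideal up to multiplication by the g.c.d. $d_k$. Since the reduced $k\times k$ minors of a diagonal Smith form always generate the unit ideal (the diagonal structure forces $\beta_{k1},\dots$ to include a unit), we get $J_k(F)=\mathbf{R}$ for every $k$.

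For the reverse direction, first I would apply the automorphism $\phi^{-1}$ of Lemma 3.14 entrywise to $F$, producing $\phi^{-1}(F)\in\mathbf{R}^{l\times m}$ with $d_l(\phi^{-1}(F))=x_1^{p}x_2^{q}$ and, since $\phi^{-1}$ is a ring isomorphism preserving ideals and g.c.d.'s, $J_k(\phi^{-1}(F))=\mathbf{R}$ for all $k$. This transports the problem into the monomial-determinant setting. The next step is to pass from the rectangular matrix to a square one: because $J_l(\phi^{-1}(F))=\mathbf{R}$ and $\phi^{-1}(F)$ has full row rank, Lemma 3.12 gives a zero-prime factorization $\phi^{-1}(F)=G\cdot F_1$ where $G\in\mathbf{R}^{l\times l}$ with $\det G=d_l=x_1^{p}x_2^{q}$ and $F_1\in\mathbf{R}^{l\times m}$ is ZLP. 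Since $F_1$ is ZLP, it is equivalent to $[E_l\ \ 0_{l\times(m-l)}]$, so $\phi^{-1}(F)$ is equivalent to $[G\ \ 0_{l\times(m-l)}]$, and it suffices to bring the square matrix $G$ to Smith form.

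The heart of the argument is then Theorem 4.2: I would verify that $G$ satisfies its hypotheses, namely $d(G)=x_1^{p}x_2^{q}$ and $J_i(G)=\mathbf{R}$ for $i=1,\dots,l$. The determinant condition is immediate from $\det G=x_1^{p}x_2^{q}$; the ideal conditions follow by applying Lemma 3.3 to the factorization $\phi^{-1}(F)=G\cdot F_1$ together with the fact that $J_i(\phi^{-1}(F))=\mathbf{R}$ and that $F_1$ being ZLP forces $d_i(\phi^{-1}(F))=d_i(G)$. With the hypotheses checked, Theorem 4.2 yields that $G$ is equivalent to its diagonal Smith form $D'=\mathrm{diag}\{x_1^{r_1}x_2^{s_1},\dots,x_1^{r_l}x_2^{s_l}\}$ with the required ordering $r_1\le\cdots\le r_l$, $s_1\le\cdots\le s_l$. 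Hence $\phi^{-1}(F)$ is equivalent to $[D'\ \ 0_{l\times(m-l)}]$.

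Finally I would apply $\phi$ back to the entire equivalence. Applying the automorphism to the unimodular transforming matrices keeps them unimodular (an isomorphism sends units to units, hence unimodular matrices to unimodular matrices), so $F=\phi(\phi^{-1}(F))$ is equivalent to $\phi([D'\ \ 0])=[D\ \ 0_{l\times(m-l)}]$ with $D=\mathrm{diag}\{d_1^{r_1}d_2^{s_1},\dots,d_1^{r_l}d_2^{s_l}\}$, which is precisely the Smith normal form of $F$. The main obstacle I expect is the bookkeeping in the reduction from the rectangular ZLP factorization to the square case, specifically verifying that the reduced-minor ideals $J_i(G)$ of the square factor inherit the unit-ideal property from $J_i(F)$; this is where Lemmas 3.3 and 3.12 must be combined carefully, and where one must confirm that the g.c.d.'s $d_i$ are genuinely preserved under the ZLP factorization rather than merely divided. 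The rest is a transport-of-structure argument that is conceptually routine once the automorphism $\phi$ is in place.
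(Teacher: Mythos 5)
Your proposal is correct and follows essentially the same route as the paper: zero-prime factorization $F=G\cdot F_1$ with $F_1$ ZLP, Quillen--Suslin to reduce to the square factor $G$, Lemma 3.3 to transfer $d_k$ and $J_k=\mathbf{R}$ to $G$, the square-case theorem, and Lemma 3.3 again for necessity. The only cosmetic difference is that you apply the automorphism $\phi^{-1}$ up front and invoke Theorem 4.2 directly, whereas the paper stays in the $d_1,d_2$ setting and invokes Theorem 4.3 --- which is itself exactly the $\phi$-conjugation of Theorem 4.2, so the two arguments coincide once unfolded.
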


\begin{proof}
\textbf{Sufficiency.} Since $d_k(F)\mid d_l(F)$, we may assume that $d_k(F)=d_1^{p_k}d_2^{q_k}, k=1,2,\cdots,l$. Let
$$B=
           [D ~~~ 0_{l\times(m-l)}]
        ,
$$
where $D=diag\{d_1^{r_1}d_2^{s_1},d_1^{r_2}d_2^{s_2},\cdots, d_1^{r_l}d_2^{s_l}\}$, $p_0\equiv0,q_0\equiv0$, $r_k=p_k-p_{k-1},s_k=q_k-q_{k-1}$, $k=1,2,\cdots,l$.

By Lemma 3.15 , there are $G\in \mathbf{R}^{l\times l}$ with $detG=d_1^pd_2^q$, and $F_1\in \mathbf{R}^{l\times m}$ such that $F=G\cdot F_1$, ~and $F_1$ is ~ZLP. ~By ~Quillen-Suslin Theorem ({\cite{Quillen1976,Suslin1976}}), ~there is a matrix $N\in \mathbf{R}^{(m-l)\times m}$ such that~~ $Q_1=[F_1^T~ ~ N^T]^T$ is a unimodular matrix, then $F(z)=G\cdot F_1$$=[ G~ ~ 0_{l\times (m-l)}]\cdot Q_1$. Since $F$ is equivalent to $
[G~  ~0_{l\times (m-l)}]$, by Lemma 3.3, $d_k(G)=d_k(F)=d_1^{p_k}d_2^{q_k}$, $J_k(G)=\mathbf{R}$ for $k=1,2,\cdots,l$. combined with  Theorem 4.3, there exist unimodular matrices $U,V\in \mathbf{R}^{l\times l}$ such that $G=U\cdot D\cdot V$. It is obviously that $V\cdot F_1$ is also ZLP, from Quillen-Suslin Theorem, there is a unimodular matrix $Q\in\mathbf{R}^{m\times m}$ such that
$V\cdot F_1\cdot Q=
[E_l~ ~ 0_{l\times(m-l)}]$. Then
\begin{align*}
F\cdot Q &=G\cdot F_1 \cdot Q=(U\cdot D\cdot V)\cdot F_1\cdot Q
=U\cdot D\cdot (V\cdot F_1\cdot Q)\\&=U\cdot D\cdot                                                                                                          [E_l~~ 0_{l\times(m-l)}]=U\cdot[D~~ 0_{l\times(m-l)}]=U \cdot B.
\end{align*}
Thus $F$ is equivalent to $B$.

\textbf{Necessity.} Assume that $F$ is equivalent to the Smith normal form $B$ of $F$.  It is obvious that $J_k(B)=\mathbf{R}$ for $k=1,2,\cdots,l$,
by Lemma 3.3, $J_k(F)=J_k(B)=\mathbf{R}$ for $k=1,2,\cdots,l$.
\end{proof}

\begin{theorem} Let $F\in \mathbf{R}^{l\times m}$ be of normal rank $r$, $d_r(F)=d_1^pd_2^q$, where $d_1=x_1-f_1(x_2,\cdots,x_n),d_2=x_2-f_2(x_3,\cdots,x_n)$, $p,q$ is a positive integer. Then $F$ is equivalent to its  Smith normal form iff $J_k(F)=\mathbf{R}$ for $k=1,2,\cdots,r$.
\end{theorem}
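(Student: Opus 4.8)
The plan is to reduce the normal-rank-$r$ case to the full-row-rank Theorem 4.4 by splitting off a ZLP factor, mirroring the sufficiency argument there but inserting one extra step to convert the resulting tall factor into a square one.

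For necessity I would argue exactly as in Theorem 4.4. If $F$ is equivalent to its Smith normal form $B=[\mathrm{diag}\{\Phi_i\}\ \ 0_{l\times(m-r)}]$, then because $\Phi_1\mid\Phi_2\mid\cdots\mid\Phi_r$ the $\gcd$ of the $k\times k$ minors of $B$ is $\Phi_1\cdots\Phi_k=d_k$, while the minor on rows and columns $\{1,\dots,k\}$ gives the reduced minor $1$; hence $J_k(B)=\mathbf{R}$. Since the reduced $k$-th order minor ideals are invariant under equivalence (Lemma 3.3), $J_k(F)=J_k(B)=\mathbf{R}$ for $k=1,\dots,r$.

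For sufficiency, assume $J_k(F)=\mathbf{R}$ for $k=1,\dots,r$. Since $J_r(F)=\mathbf{R}$, Lemma 3.14 provides a factorization $F=G\cdot F_1$ with $F_1\in\mathbf{R}^{r\times m}$ ZLP, $G\in\mathbf{R}^{l\times r}$, and $J_r(G)=\mathbf{R}$; as $F_1$ has rank $r$ and $F$ has normal rank $r$, $G$ has full column rank $r$. By the Quillen--Suslin Theorem I complete $F_1$ to a unimodular $Q\in\mathbf{R}^{m\times m}$ with $F_1Q=[E_r\ \ 0_{r\times(m-r)}]$, so $FQ=[G\ \ 0_{l\times(m-r)}]$ and thus $F$ is equivalent to $[G\ \ 0]$, giving $d_k(G)=d_k(F)$ and $J_k(G)=\mathbf{R}$ for $k=1,\dots,r$ (a Cauchy--Binet computation using $d_r(F_1)=1$ also yields $d_r(G)=d_r(F)=d_1^{p}d_2^{q}$). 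I then make $G$ square: its transpose $G^{T}\in\mathbf{R}^{r\times l}$ has full row rank with $d_k(G^{T})=d_k(G)=d_k(F)$ and $J_k(G^{T})=\mathbf{R}$, so Theorem 4.4 shows $G^{T}$ is equivalent to $[D'\ \ 0_{r\times(l-r)}]$ with $D'=\mathrm{diag}\{\Phi_1,\dots,\Phi_r\}$ and $\Phi_i=d_i(F)/d_{i-1}(F)$. Transposing back, $G$ is equivalent to $\left(\begin{smallmatrix}D'\\ 0_{(l-r)\times r}\end{smallmatrix}\right)$, say $G=M\left(\begin{smallmatrix}D'\\0\end{smallmatrix}\right)V$ with $M\in\mathbf{R}^{l\times l}$ and $V\in\mathbf{R}^{r\times r}$ unimodular. (Alternatively, Lemma 3.2 produces a ZLP annihilator $H$ of $G$ which, completed to a unimodular matrix by Quillen--Suslin, reduces $G$ to $\left(\begin{smallmatrix}G'\\0\end{smallmatrix}\right)$ with $G'$ square, to which Theorem 4.3 is applied directly.) Substituting into $FQ=[G\ \ 0]$ and absorbing $V$ into the block $\left(\begin{smallmatrix}V&0\\0&E_{m-r}\end{smallmatrix}\right)$, I obtain $F=M\,[D'\ \ 0_{l\times(m-r)}]\,N$ for a unimodular $N\in\mathbf{R}^{m\times m}$; since $[D'\ \ 0]$ is exactly the Smith normal form $B$ of $F$, this gives $F\sim B$.

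The main obstacle I expect is the bookkeeping in the final assembly: one must check that the invariant factors produced for the square (or transposed) block $D'$ genuinely coincide with the Smith invariants $\Phi_i=d_i(F)/d_{i-1}(F)$ of $F$, which rests on the chain of equalities $d_k(G)=d_k(F)$, and then combine $M$, $V$ and $Q$ into honest unimodular matrices of the correct sizes so that the zero blocks align and $[D'\ \ 0]$ emerges precisely as the Smith form. The only genuinely new ingredient beyond the proof of Theorem 4.4 is the rank reduction of the tall factor $G$ to a square matrix, handled either by transposition together with Theorem 4.4 or by the ZLP annihilator of Lemma 3.2.
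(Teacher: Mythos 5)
Your proposal is correct and takes essentially the same route as the paper: the paper likewise factors $F=G\cdot F_1$ with $F_1$ ZLP via the normal-rank factorization lemma, completes $F_1$ to a unimodular $Q_1$ by Quillen--Suslin to get $F\sim[G\ \ 0_{l\times(m-r)}]$, applies Theorem~4.4 to $G^{T}$, transposes back, and reassembles the unimodular factors exactly as you do, with the same necessity argument via invariance of $J_k$ under equivalence. The only cosmetic difference is your added justification that $J_k(B)=\mathbf{R}$ and the optional alternative via the ZLP annihilator of Lemma~3.2, which the paper does not use.
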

\begin{proof}
 \textbf{Sufficiency.} ~Since $d_r(F)=d_1^pd_2^q$, firstly, $d_i(F)\mid d_r(F)$, we can assume that $d_i(F)=d_1^{p_i}d_2^{q_i}$, where $p_i,q_i$ is a non-negative integer, $i=1,2,\cdots,r$. Let
 $$B=\begin{bmatrix}
            D & 0_{r\times(m-r)} \\
            0_{(l-r)\times r} & 0_{(l-r)\times(m-r)} \\
          \end{bmatrix},
 $$
where $D=diag\{D=diag\{d_1^{r_1}d_2^{s_1},d_1^{r_2}d_2^{s_2},\cdots,d_1^{r_l}d_2^{s_l}\}$, $s_1=p_1,r_1=q_1$, $s_i=p_i-p_{i-1},r_i=q_i-q_{i-1}$, $i=2,3,\cdots,r$.
From Lemma 3.14, there are $G\in \mathbf{R}^{l\times r}$, $F_1\in\mathbf{R}^{r\times m} $ such that $F=G\cdot F_1$, and $F_1$ is ZLP. By Quillen-Suslin Theorem, there is a unimodular matrix $Q_1\in\mathbf{R}^{m\times m}$ such that $F_1\cdot Q_1=
                       [ E_r~ ~ 0_{r\times(m-r)}]
$.
Then $F\cdot Q_1=(G\cdot F_1)\cdot Q_1=G\cdot(F_1\cdot Q_1)=G\cdot
                        [E_r~ ~0_{r\times(m-r)}]
                      =[G~ ~ 0_{r\times(m-r)}]
                      $, $d_k(G)=d_k(F\cdot Q_1)$, $J_k(G)=J_k(F\cdot Q_1)$. By Lemma 3.7, $d_k(G)=d_k(F)=d_1^{p_k}d_2^{q_k}$, $J_k(G)=J_k(F)=\mathbf{R}$, where $k=1,2,\cdots,r$. By Theorem 4.4, $G^{T}$ is equivalent to its  Smith normal form
                        $[D ~~0_{r\times(l-r)}]
                      $, then $G$ is equivalent to its Smith normal form
$\begin{bmatrix}
                          D \\
                          0_{(l-r)\times r} \\
                        \end{bmatrix},
                    $
i.e., there are unimodular matrices $P_1\in \mathbf{R}^{l\times l}$, $Q_2\in\mathbf{R}^{r\times r}$ such that
$$G=P_1\cdot\begin{bmatrix}
                          D \\
                          0_{(l-r)\times r} \\
                        \end{bmatrix}\cdot Q_2 .$$
Setting
$$P_2=\begin{bmatrix}
             Q_2 ~&  \\
              ~& E_{m-r} \\
           \end{bmatrix}.
$$
Then
\begin{align*}F\cdot Q_1&=G\cdot F_1\cdot Q_1=G\cdot (F_1\cdot Q_1)
                                   =G\cdot
                              [ E_r~  ~0_{r\times(m-r)}]\\
&=P_1\cdot\begin{bmatrix}
            D  \\
            0_{(l-r)\times r}  \\
          \end{bmatrix}\cdot Q_2\cdot
             [E_r ~~  0_{r\times (m-r)}]\\
&=P_1\cdot\begin{bmatrix}
            D Q_2 &0_{r\times (m-r)} \\
            0_{(l-r)\times r}&0_{(l-r)\times (m-r)}  \\
          \end{bmatrix}
\\&=P_1\cdot\begin{bmatrix}
            D~ &~ 0_{r\times(m-r)} \\
            0_{(l-r)\times r} ~&~ 0_{(l-r)\times(m-r)} \\
          \end{bmatrix}\cdot\begin{bmatrix}
             Q_2 &  \\
              & E_{m-r} \\
           \end{bmatrix}\\
&=P_1\cdot B\cdot P_2.
\end{align*}
Thus $F$ is equivalent to $B$.

\textbf{Necessity.} ~~Assume ~that ~$F$ ~is ~equivalent~ to ~its Smith normal form $B$, ~it is obvious that $J_k(B)=\mathbf{R}$ for $k=1,~2,~\cdots,~r$, by Lemma 3.2, $J_k(F)=J_k(B)=\mathbf{R}$ for $k=1,2,\cdots,r$.
\end{proof}

\begin{example}\quad Consider the following  $3D$ polynomial matrix\\
\\$$F(x_1,x_2,x_3)=\begin{bmatrix}
           a_{11}~ & a_{12} ~& a_{13} \\
          a_{21} ~& a_{22}~& a_{23} \\
           a_{31}~& a_{32} ~&a_{33}\\
           \end{bmatrix},$$
           
\end{example}
where
\begin{align*}
a_{11}&=1,~~~ a_{12}=x_1x_2,~~~a_{13}=x_1+x_2,\\
a_{21}&= -4x_2^2,~~~a_{22}=x_1x_2-x_1+x_2-x_2^2-4x_1x_2^3,\\
a_{23}&=x_2^2-x_1x_2-5x_2^3-3x_1x_2^2,~~~a_{31}=x_1x_2,\\
a_{32}&=x_1^2x_2-x_1^2-x_1x_2^2+x_1x_2+x_1^2x_2^2,\\
a_{33}&=3x_1x_2^2-2x_1x_2+x_1-x_2-x_2^3+2x_2^2+x_1^2x_2^2-x_1x_2^3.
\end{align*}
By Computing, $d_1(F)=1$, $d_2(F)=(x_1-x_2)(x_2-1)$, $d_3(F)=(x_1-x_2)(x_2-1)^2$. Then, computing the reduced Gr\"{o}bner bases of the ideal that generated by the reduced $k$-$th$ order minors of $F(\boldsymbol x)$, we have that $J_k(F)=\mathbf{R}$, $k=1,2,3$. From Theorem 2.3, $F(\boldsymbol x)$ is equivalent to its  Smith normal form $P(\boldsymbol x)$
$$P(\boldsymbol x)=\begin{bmatrix}
           1 ~&  &  \\
            & ~d_1d_2 &  \\
            &  & ~d_1d_2^2 \\
         \end{bmatrix},
$$
where $d_1=x_1-x_2,d_2=x_2-1$.\\
Considering
$$F(x_1,1)=\left(
                     \begin{array}{ccc}
                       1 & x_1 & x_1+1\\
                       -4 & -4x_1 & -4-4x_1 \\
                       x_1 & x_1^2 & x_1^2+x_1 \\
                     \end{array}
                   \right).
$$
We have $$U_1(x_1)\cdot F(x_1,1)=\left(
                                       \begin{array}{ccc}
                                       1 & x_1 & x_1+1 \\
                                         0 & 0 & 0 \\
                                         0& 0 & 0 \\
                                       \end{array}
                                     \right),
$$
where $$
U_1(x_1)=\left(
               \begin{array}{ccc}
                 1 & 0 & 0 \\
                 4 & 1 & 0 \\
                 -x_1 & 0 & 1\\
               \end{array}
             \right).
$$
Then $$\begin{aligned}U_1(x_1)\cdot F(\boldsymbol x)
=\begin{bmatrix}
                                           1&  &  \\
                                            & d_2 &  \\
                                            & & d_2 \\
                                          \end{bmatrix}\end{aligned} \cdot F_1$$
                                          
                               where $$F_1= \begin{aligned} \begin{bmatrix}
                                                        1 & x_1x_2 & x_1+x_2 \\
                                                        -4x_2-4 & x_1-x_2-4x_1x_2-4x_1x_2^2  & -5x_2^2-3x_1x_2-4x_2-4x_1 \\
                                                        x_1 & x_1^2x_2-x_1x_2+x_1^2 & x_1^2 x_2-x_2^2-x_1x_2^2+\\&&2x_1x_2+x_1^2+x_2-x_1 \\
                                                      \end{bmatrix},\end{aligned}
$$
Setting

                                                    $$  U_2=\begin{bmatrix}
                                                                      1 ~& 0~ & 0 \\
                                                                      4+4x_2 ~& 1 ~& 0 \\
                                                                      -x_2 ~& 0~& 1 \\
                                                                    \end{bmatrix}.
                                                    $$
Then
 $$U_2\cdot F_1=\begin{bmatrix}
                                             1&  &  \\
                                            & d_1 &  \\
                                           & & d_1 \\
                                          \end{bmatrix}\cdot\begin{bmatrix}
                                                         1 & x_1x_2 & x_1+x_2 \\
                                                        0 & 1 & x_2 \\
                                                        1 & x_1x_2+x_1 & x_1+2x_2+x_1x_2-1 \\
                                                      \end{bmatrix}.
                                        $$
Setting $$F_2(\boldsymbol x)=\begin{bmatrix}
                                                       1 & x_1x_2 & x_1+x_2 \\
                                                        0 & 1 & x_2 \\
                                                        1 & x_1x_2+x_1 & x_1+2x_2+x_1x_2-1 \\
                                                      \end{bmatrix} ,$$$$U_3=\begin{bmatrix}
                                                                      1 ~& 0~ & 0 \\
                                                                      0 ~& 1 ~& 0 \\
                                                                      -1 ~& -x_1~& 1 \\
                                                                    \end{bmatrix}.$$
it is obvious that $$U_3\cdot F_2(\boldsymbol x)=\begin{bmatrix}
                              1&  &  \\
                              & 1 & \\
                               &  & d_2 \\
                           \end{bmatrix}\cdot F_3(\boldsymbol x),
$$
where $$F_3(\boldsymbol x)=\begin{bmatrix}
                                                        1 & x_1x_2 & x_1+x_2 \\
                                                        0 & 1 & x_2 \\
                                                        0& 0 & 1 \\
                                                      \end{bmatrix}.$$
We can easily checked that $F_3$ is a unimodular matrix, and
\begin{align*}\begin{bmatrix}
                                1&  &\\
                                  & d_2&  \\
                                   &  & d_2 \\
                           \end{bmatrix}\cdot U_2^{-1}\cdot\begin{bmatrix}
                               1& & \\
                                  & d_1 & \\
                                  &  & d_1 \\
                           \end{bmatrix}\cdot U_3^{-1}\cdot\begin{bmatrix}
                               1& & \\
                                  & 1 & \\
                                  &  & d_2 \\
                           \end{bmatrix}
=V_1^{-1}\cdot\begin{bmatrix}
                             1& & \\
                              & d_1d_2 &\\
                              &  & d_1d_2^2\\
                          \end{bmatrix}.
\end{align*}
Then $$\begin{aligned}
F(\boldsymbol x)&=U_1^{-1}\cdot V_1^{-1}\cdot\begin{bmatrix}
                             1& & \\
                              & d_1d_2 &\\
                              &  & d_1d_2^2\\
                          \end{bmatrix} \cdot F_3(x_1,x_2).
                         \end{aligned}$$
Thus $$F(\boldsymbol x)=U(\boldsymbol x)\cdot\begin{bmatrix}
                              1& & \\
                              & d_1d_2 &\\
                              &  & d_1d_2^2\\
                           \end{bmatrix}\cdot V(\boldsymbol x),$$
where $U(\boldsymbol x)=U_1^{-1}\cdot V_1^{-1}$, and $V(\boldsymbol x)=F_3(\boldsymbol x)$ are unimodular matrices.

\section{Conclusions}
In this paper, we have investigated the quasi weakly linear multivariate matrices $F(\boldsymbol x)$, i.e., the g.c.d. of maximal order minors of $F(\boldsymbol x)$ is $(x_1-f_1(x_2,\cdots,x_n))^p(x_2-f_2(x_3,\cdots,x_n))^q$. Using hierarchical-recursive methods, we have obtained  $F(\boldsymbol x)$ is equivalent to its Smith normal form if and only if $J_i(F)=\mathbf{R}$ for $i=1,2,\cdots,k$, where $k$ is the normal rank of $F(\boldsymbol x)$. And $J_i(F)=\mathbf{R}$ if and only if the reduced Gr\"{o}bner bases of the ideal that generated by the reduced k-th order minors of $F(\boldsymbol x)$ is \{1\}. Therefore, we can verify whether is such a matrix equivalent to its Smith normal form by computing the reduced Gr\"{o}bner bases of the relevant  ideals. Finally, we gave the corresponding algorithm.

\section*{Acknowledgments}

 This research was supported by the National Natural Science Foundation of China(12371507), and  the Scientific Research Fund of Hunan Province Education Department (22A0334).

\bibliographystyle{elsarticle-harv}

\bibliography{template_ref}

\end{document}